\documentclass{article}
\usepackage{pdfsync}
\usepackage[plainpages=false]{hyperref}
\usepackage{amsfonts,amsmath,amssymb,amsthm}
\usepackage{latexsym,lscape,rawfonts,mathrsfs}
\usepackage{mathtools}
\usepackage[utf8]{inputenc}
\usepackage[overload]{empheq}
\usepackage{cases}

\usepackage{multicol}
\usepackage{xcolor}

\usepackage{tocloft}

\usepackage[all]{xy}
\usepackage{eufrak}
\usepackage{makeidx}
\usepackage{graphicx,psfrag}
\usepackage{pstool}
\usepackage{float}
\usepackage{tikz-cd}

\usepackage{array,tabularx}

\usepackage{setspace}

\usepackage{appendix}

\usepackage{txfonts}

\newcommand{\M}{\ensuremath{\mathcal{M}}}

\newcommand{\R}{\ensuremath{\mathbb{R}}}
\newcommand{\Z}{\ensuremath{\mathbb{Z}}}
\newcommand{\CP}{\ensuremath{\mathbb{CP}}}

\newcommand{\ba}{\begin{align*}}
\newcommand{\ea}{\end{align*}}

\newcommand{\tr}{\mathrm{tr}}%trace

\newcommand{\divg}{\mathrm{div}}
\newcommand{\Rm}{\mathrm{Rm}}
\newcommand{\Ric}{\mathrm{Ric}}
\newcommand{\s}{\mathrm{scal}}

\newcommand{\V}{\mathrm{Vol}}
\newcommand{\coker}{\mathrm{coker}}

  \newcommand{\SO}{\mathcal{O}}

\makeatletter
\def\ExtendSymbol#1#2#3#4#5{\ext@arrow 0099{\arrowfill@#1#2#3}{#4}{#5}}

\makeatother

\makeatletter
\def\ExtendSymbol#1#2#3#4#5{\ext@arrow 0099{\arrowfill@#1#2#3}{#4}{#5}}

\makeatother

\definecolor{orange}{rgb}{1,0.5,0}
\definecolor{brown}{rgb}{0.48,0.33,0.19}
\definecolor{miao}{cmyk}{0.5,0,0.2,0.2}
\definecolor{qiao}{gray}{0.96}

\newtheorem{prop}{Proposition}[section]
\newtheorem{proposition}[prop]{Proposition}

\newtheorem{theorem}[prop]{Theorem}

\newtheorem{lemma}[prop]{Lemma}

\newtheorem{corollary}[prop]{Corollary}

\newtheorem{remark}[prop]{Remark}

\newtheorem{definition}[prop]{Definition}

\newtheorem{notation}[prop]{Notation}

\numberwithin{equation}{section}

\newcommand{\gtd}{g_{t}}
\newcommand{\gtidi}{g_{t_i}}
\newcommand{\gtdp}{g'_{t}}
\newcommand{\rtd}{r_{t}}
\newcommand{\ctab}{C^{2,\alpha}_{\beta,\beta}}

\newcommand{\cab}{\rtd^{-2}C^{0,\alpha}_{\beta,\beta}}

\newcommand{\TO}{\tilde{\SO}}

\title{Resolution of compact Einstein orbifolds in general dimensions}
\author{Yichen Yao}
\date{}

\begin{document}
\maketitle
\begin{abstract}
    Given a noncollapsing sequence of $m$-dimensional compact Einstein manifolds with a uniform energy bound, the Gromov-Hausdorff limit is a compact Einstein orbifold with at most finitely many singularities. Conversely, starting with a compact Einstein orbifold, we are interested in whether there exists a sequence of smooth Einstein metrics converging to it. 
    
    In this paper, we provide a negative answer. We give an explicit obstruction, such that if an Einstein orbifold with negative scalar curvature appears as a noncollapsing limit of compact Einstein manifolds, then the obstruction must vanish. Such an obstruction links the curvature at the orbifold singularity and the geometry of the blow-up limit. As an example, the obstruction never vanishes for hyperbolic orbifolds, so they can not be approximated by smooth Einstein manifolds. 
    
    This work partially extends the work of Ozuch in dimension 4. We are assuming the Einstein orbifold has negative scalar curvature, to facilitate the exposition. Also, this work includes an obstruction found by Morteza-Viaclovski as a special case, which holds when the blow-up limit comes from the Calabi ansatz.
\end{abstract}
\section{Introduction}
An Einstein manifold is a Riemannian manifold $(M,g)$ satisfying
\[\Ric(g)=\mu g\]
for some constant $\mu\in\R$. They arise naturally as critical points of the normalized Einstein-Hilbert functional, and appear as candidates for optimal metrics on manifolds in general dimensions $m>3$.
\subsection{Noncollapsing limits of compact Einstein manifolds}
We consider $m-$dimensional compact Einstein manifolds $(M,g)$ satisfying the following conditions:
\begin{equation}\label{moduli}
    |\Ric_g|_g\leq\Lambda,\quad\mathrm{diam}(M,g)\leq D,\quad\V(M,g)\geq V,\quad\int_M|\Rm|^{\frac{m}{2}}_gd\V_g\leq E
\end{equation}
for positive constants $\Lambda,D,V,E$, and denote $\M(m,\Lambda,D,V,E)$ to be the space of all $m-$dimensional compact Einstein manifolds satisfying (\ref{moduli}), modulo isometry. Endowing the moduli space with the Gromov-Hausdorff topology, there is a famous compactification result:
\begin{theorem}[\cite{Anderson,BKN}]\label{BKN}
    For any sequence $(M_i,g_i)_i$ in $\M(m,\Lambda,D,V,E)$, we can extract a subsequence converging to a compact metric space $(M_0,d)$ in Gromov-Hausdorff topology. Moreover,
    \begin{itemize}
        \item[(1)] $(M_0,d)$ is an $m$-dimensional Einstein orbifold with a finite singular set $S\subset M_0$;
        \item[(2)] The convergence is $C^{\infty}$ on $M_0\setminus S$;
        \item[(3)] For each $p\in S$, there are points $p_i\in M_i$ with $p_i\to p$, and positive numbers $t_i\to0$, such that the blow-up sequence $(M_i,t_i^{-2}g_i,p_i)$ converges in the pointed Gromov-Hausdorff sense. The blow-up limit $(N^m,g_b)$ satisfies
        \begin{itemize}
            \item[(i)] $(N,g_b)$ is asymptotically locally Euclidean (see Definition \ref{ALEdefn});
            \item[(ii)] $\Ric_{g_b}=0$;
            \item[(iii)] $\int_N|\Rm|_{g_b}^{\frac{m}{2}}d\V_{g_b}<+\infty$.
        \end{itemize}
    \end{itemize}
\end{theorem}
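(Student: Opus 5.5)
The plan follows the classical Anderson / Bando--Kasue--Nakajima argument, organized around an $\varepsilon$-regularity theorem. Because $\Ric_{g_i}=\mu_i g_i$ with $|\mu_i|\le\Lambda$, $\mathrm{diam}\le D$ and $\V\ge V$, Bishop--Gromov volume comparison gives a uniform lower bound on the volume of balls, $\V(B(x,r))\ge c(m,\Lambda,D,V)\,r^m$ for $r\le D$. Gromov's precompactness theorem (using $\Ric\ge-\Lambda(m-1)$ and bounded diameter) then lets us extract a Gromov--Hausdorff convergent subsequence $(M_i,g_i)\to(M_0,d)$. After passing to a further subsequence we may also assume $\mu_i\to\mu$.

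\textbf{$\varepsilon$-regularity.} The key analytic input is the following statement. There is $\varepsilon=\varepsilon(m,\Lambda,V,D)>0$ such that if $\int_{B(x,r)}|\Rm|^{m/2}\le\varepsilon$, then
\[
\sup_{B(x,r/2)}|\Rm|\le C r^{-2}\Big(\int_{B(x,r)}|\Rm|^{m/2}\Big)^{2/m},
\]
and the same holds for all derivatives. I would prove it as follows:
\begin{itemize}
\item The Einstein condition gives the elliptic inequality $\Delta|\Rm|\ge -C|\Rm|^2-C\Lambda|\Rm|$.
\item The volume noncollapsing, together with Croke/Anderson isoperimetric control, gives a uniform local Sobolev inequality.
\item Moser iteration, using the smallness of $\|\Rm\|_{L^{m/2}}$ to absorb the quadratic term, yields the $\sup$ bound.
\item Harmonic coordinates and elliptic regularity for $\Ric(g)=\mu g$, which is elliptic in harmonic coordinates, upgrade this to $C^{k}$ bounds.
\end{itemize}
This step, and specifically the uniform Sobolev constant under only a Ricci bound, is the main technical obstacle.

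\textbf{Finiteness of $S$ and smooth convergence.} Define $S$ as the set of points $p\in M_0$ such that for every $r>0$,
\[
\liminf_i\int_{B(p_i,r)}|\Rm_{g_i}|^{m/2}\ge\varepsilon .
\]
Since the total energy is at most $E$, a covering argument shows that $S$ has at most $E/\varepsilon$ points. Away from $S$, $\varepsilon$-regularity gives uniform $C^k$ curvature bounds and injectivity radius bounds; the latter follow from curvature bounds plus the volume bound, by Cheeger--Gromov--Taylor. The Cheeger--Gromov compactness theorem then gives $C^\infty$ convergence on $M_0\setminus S$ to a smooth Einstein metric with $\Ric=\mu g$. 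This proves (2).

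\textbf{Orbifold structure at $S$ and blow-up limits.} Fix $p\in S$ and choose $p_i\to p$ and $t_i\to0$ by rescaling at the curvature scale. Concretely, take $t_i$ to be the largest radius at which the energy on the ball $B(x,t_i)$ equals $\varepsilon/2$ for some $x$ near $p_i$, so that energy concentration does not occur below scale $t_i$. The rescaled metrics $t_i^{-2}g_i$ have $|\Ric|\le\Lambda t_i^2\to0$, so they are noncollapsed and have bounded energy. Applying the previous step to them, the sequence converges smoothly away from finitely many points. The choice of $t_i$ rules out concentration at bounded distance, so the limit $(N,g_b)$ is a complete, smooth, Ricci-flat manifold with
\[
\int_N|\Rm_{g_b}|^{m/2}\le E ,
\]
which gives (ii) and (iii). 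It also has Euclidean volume growth, because Bishop--Gromov comparison passes to the limit.

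For (i), on annuli $\{r\le\rho\le 2r\}$ with $r\to\infty$, the energy tends to $0$, so $\varepsilon$-regularity gives $|\Rm|=o(\rho^{-2})$. The tangent cone at infinity is therefore a flat cone $\R^m/\Gamma$ with $\Gamma\subset O(m)$ finite. Bando--Kasue--Nakajima's decay argument then improves this to $|\Rm|=O(\rho^{-m})$ and gives ALE coordinates. That argument rests on the Kato inequality and an improved iteration, followed by constructing coordinates at infinity via harmonic coordinates.

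The same curvature-decay analysis, applied to punctured neighborhoods of $p$ in $M_0$ (which have energy going to $0$ at small scales, with $|\Rm|\rho^2\to0$), shows that a small punctured ball is a flat cone $(\R^m\setminus 0)/\Gamma$ up to small error. Lifting to the universal cover, the metric has bounded curvature there and extends across the origin as a smooth Einstein metric by removable-singularity results for Einstein metrics in harmonic coordinates. Hence $M_0$ is an orbifold near $p$, which completes (1).
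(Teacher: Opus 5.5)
Your outline is the standard Anderson/Bando--Kasue--Nakajima argument: $\varepsilon$-regularity from a uniform Sobolev bound, energy counting for the finiteness of $S$, blow-up at the concentration scale, BKN curvature decay for the ALE end, and a removable-singularity theorem at the orbifold points. That is exactly what the paper relies on, since it gives no proof of Theorem~\ref{BKN} and simply cites \cite{Anderson,BKN}. Two points need tightening: pass to a subsequence along which the energy measures $|\Rm_{g_i}|^{m/2}d\V_{g_i}$ converge before defining $S$ (with your $\liminf$ definition, small energy off $S$ only holds along subsequences that depend on the point), and note that bounded curvature on the lifted punctured ball is an output of the improved decay $|\Rm|=O(\rho^{-2+\epsilon})$ and the removable-singularity argument, not a consequence of lifting itself.
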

Note that in general, a blow-up limit $(N,g_b)$ may not be smooth. It would admit finitely many orbifold singularities, and we can do further blow-ups at singularities of $(N,g_b)$, until we get smooth limits. This is the bubble tree pattern of noncollapsing limits of compact Einstein manifolds.

We have added the finite energy assumption $\int_M|\Rm|^{\frac{m}{2}}_gd\V_g\leq E$ on the moduli space, which guarantees that the limit orbifold has only finitely many singular points. If such an assumption is removed, the singular set in the limit space would have Hausdorff dimension $m-4$ in general, as proved by \cite{CheegerNaber,JiangNaber}. Moreover, in dimension $m=4$, the finite energy assumption is superfluous: \cite{JiangNaber} showed that bounded Ricci curvature and volume non-collapsing imply $L^2$-finiteness of $|\Rm|$.  

Based on Theorem \ref{BKN}, we only consider Einstein orbifolds $M_0$ with finitely many singularities (which has dimension 0) throughout this paper. For brevity, we simply refer to them as Einstein orbifolds. More restrictively, we only treat special singular points $p$ on $M_0$, such that the blow-up sequence $(M_i,t_i^{-2}g_i,p_i)$ converges to a \textbf{smooth} Ricci-flat ALE manifold.

\subsection{Resolution of Einstein orbifolds}
In view of Theorem \ref{BKN}, we say an Einstein orbifold $(M_0,g_0)$ \textbf{admits a resolution sequence} if there is a sequence $\{(M_i,g_i)\}$ in $\M(m,\Lambda,D,V,E)$ converging to $(M_0,g_0)$. In other words, $(M_0,g_0)$ admits a resolution sequence if and only if it lies in the compactified moduli space $\overline{\M(m,\Lambda,D,V,E)}$. There is a long-standing question: what are the obstructions for a compact Einstein orbifold admitting a resolution sequence?

In a series of works \cite{Biq1,Ozu2,Ozu3}, Biquard and Ozuch give a partial answer to the question above, in dimension $m=4$. They found a local obstruction, which relates the curvature at the singularity $p\in M_0$ with geometric quantities of the blow-up limit at $p$. 

Recently, Wang and Yin's work \cite{WangYin} introduces the asymptotic curvature and the renormalized volume of a Ricci-flat ALE space in general dimensions. Such quantities enables us to formulate an analogous local obstruction in general dimensions. 

In this paper, we partially extend the Biquard-Ozuch's local obstruction to all dimensions $m\geq4$, closely following Ozuch's approach \cite{Ozu2,Ozu3}. For expositional coherence and to narrow the scope of our discussion, we additionally restrict our attention to Einstein orbifolds with negative scalar curvature, which enables us to work in Bianchi gauge (see the explanation following Proposition \ref{prop-gauge-linear}). Beyond the parallel extension to arbitrary dimensions and the unification of existing results, our study contains no further conceptual or technical innovations.

Before stating the main theorem, we explain some necessary notations. 

At an orbifold point $p\in(M_0,g_0)$, we denote $W(0)$ as the Weyl tensor at $p$, and $W_{ijkl}(0)$ as its components under geodesic coordinates centered at $p$. For a Ricci-flat ALE manifold $(N,g_b)$, \cite{WangYin} constructed optimal ALE coordinates on the end of $(N,g_b)$, that is, ALE coordinates satisfying Theorem \ref{ALEcoor}(1). With the help of such coordinates, they defined the asymptotic Weyl tensor $W^\infty$ and the renormalized volume $\mathcal{V}$ of $(N,g_b)$ (see Theorem \ref{ALEcoor} and Definition \ref{WVdef}). We will denote $W^{\infty}_{ijkl}$ as the components of $W^\infty$ under an optimal ALE coordinate chart.

Our main theorem is
\begin{theorem}\label{Maintheorem}
    Consider a compact Einstein orbifold $(M_0^m,g_0)$ satisfying the following conditions:
    \begin{itemize}
        \item $\Ric_{g_0}=\mu g_0$ with $\mu<0$;
        \item $(M_0,g_0)$ admits a resolution sequence $\{(M_i,g_i)\}\subset\M(m,\Lambda,D,V,E)$;
        \item at a singular point $p\in M_0$ modelled on $\R^m/\Gamma$, a blow-up sequence $(M_i,t_i^{-2}g_i,p_i)$ converges to a \emph{smooth} blow-up limit $(N,g_b)$.
    \end{itemize}
    Then, under any geodesic coordinates centered at $p$ and any optimal ALE coordinates satisfying Theorem \ref{ALEcoor}(1) for $(N,g_b)$, we have
    \begin{equation}\label{explicit-obstruction}
        \mu\mathcal{V}+\frac{\omega_{m-1}}{6m(m-2)|\Gamma|}(W_{ikjl}(0)W^\infty_{ikjl}+W_{ikjl}(0)W^\infty_{iljk})=0.
    \end{equation}
\end{theorem}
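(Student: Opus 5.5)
We follow Ozuch's strategy from \cite{Ozu2,Ozu3}: show that every Einstein metric close to the orbifold in Gromov--Hausdorff sense is a small perturbation of a naive gluing, and then extract the obstruction as the projection of the Einstein equation onto the cokernel of the linearized operator on the bubble.

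\textbf{Step 1: naive gluing and the Bianchi-gauged Einstein operator.} For small $t>0$, build an approximate metric $g^D_t$ on the topological resolution. Away from $p$ it equals $g_0$. Near $p$ it equals $t^2 g_b$ written in optimal ALE coordinates (Theorem \ref{ALEcoor}). In between, on an annulus $t^{\beta}\lesssim r\lesssim t^{\beta'}$, we interpolate with cut-offs. To make the error small enough, the bubble first has to be corrected: we solve $\Ric=\mu g$ to second order on $N$ by adding a quadratic correction $t^2 h_2$ to $g_b$. Here $h_2$ solves $P_{g_b} h_2=\mu g_b$ (up to gauge) and is asymptotic to the quadratic part of $g_0$ in geodesic coordinates, namely $-\tfrac13 R_{ikjl}(0)x^kx^l$. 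We set $\Phi(g)=\Ric(g)-\mu g+\delta^*_g B_{g^D}(g)$. Since $\mu<0$, the linearization is controlled by the Lichnerowicz-type operator $P=\tfrac12\nabla^*\nabla-\mathring{R}$ shifted by $-\mu$. On $g_0$ it is invertible because $\mu<0$ and there are no infinitesimal Einstein deformations beyond the gauge-handled ones; on the bubble its $L^2$ kernel $\mathbf O$ consists of the decaying infinitesimal Ricci-flat deformations of $g_b$. Working in the weighted H\"older spaces $\ctab$ and $\cab$, we prove uniform invertibility of the linearization modulo the approximate cokernel $\TO$, obtained by cutting off $\mathbf O$. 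A Lyapunov--Schmidt fixed-point argument then shows that any Einstein metric $g_i$ from the resolution sequence (after a diffeomorphism putting it in Bianchi gauge, obtained from the Gromov--Hausdorff closeness and Theorem \ref{BKN}(2),(3)) equals $g^D_{t_i}+h_i$. Here $h_i$ is small, and $\Phi(g^D_{t_i}+h_i)\in\TO$ must vanish.

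\textbf{Step 2: project onto the scaling direction.} The obstruction comes from pairing with the specific cokernel element generated by scaling. The Lie derivative $\mathcal L_{X}g_b$, with $X=r\partial_r$ near infinity, is not $L^2$. Still, the $L^2$ pairing of $\Phi(g^D_t)$ against a cut-off of it, after rescaling, has a leading term of order $t^{4}$ (relative to $t^2 g_b$ normalization) that can be computed by integration by parts. The relevant identity is the first variation of the Einstein--Hilbert/volume functional along scaling. The contribution from $\mu g$ against the scaling field gives $\mu$ times the regularized volume difference, which is exactly the renormalized volume $\mathcal V$ of Definition \ref{WVdef}. The boundary term at large radius pairs the quadratic Taylor part of $g_0$ (Weyl part $W(0)$) with the $r^{-m}$ asymptotic term of $g_b$ (encoded in $W^\infty$), as in \cite{WangYin}. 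Over the sphere $S^{m-1}/\Gamma$, this produces the factor $\omega_{m-1}/(|\Gamma|\,6m(m-2))$ times the contraction $W_{ikjl}(0)(W^\infty_{ikjl}+W^\infty_{iljk})$. The Ricci part of $R(0)$ pairs with the trace-free $W^\infty$ to zero, and the scalar part reorganizes into $\mu\mathcal V$.

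\textbf{Step 3: error control and conclusion.} Since $\Phi(g_{t_i})=0$, we have $\langle\Phi(g^D_{t_i}),o\rangle=-\langle D\Phi(h_i)+Q(h_i),o\rangle$. We must show this right side is $o(t_i^{4})$, using the bounds on $h_i$ together with the fact that $o$ almost lies in the kernel of the adjoint. Dividing by $t_i^4$ and letting $i\to\infty$ gives \eqref{explicit-obstruction}. We expect the main obstacle to lie in Steps 2--3. The scaling element is not decaying, so the weight choice $\beta$ has to be tuned carefully, so that the cut-off pairing converges exactly to the renormalized quantities rather than to divergent or gauge-dependent expressions. The independence from the choice of optimal coordinates should follow from the uniqueness statement in Theorem \ref{ALEcoor}.
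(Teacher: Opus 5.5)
Your architecture (Bianchi gauge via $\mu<0$, an approximate cokernel, a correction $t^2h_2$ on the bubble, pairing with a bubble cokernel element) is the paper's. However, the two steps that actually produce \eqref{explicit-obstruction} are missing.

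\textbf{The test tensor.} The paper uses the Poisson solution $\Delta_{g_b}u=2m$, $u=r^2+o(1)$ of Proposition \ref{Poisson-solution}. From it, it builds the genuine $L^2$ element $o_0=(\mathscr{L}_{\nabla u}g_b)^\circ=2\,\mathrm{Hess}\,u-4g_b\in\SO(g_b)$, which is trace-free, divergence-free and $O(r^{-m})$. The renormalized volume $\mathcal V$ enters only through the $r^{2-m}$ coefficient of $u$, which gives \eqref{o0expansion}. It does not enter by ``pairing $\mu g$ with a scaling field''. Your cut-off of $\mathscr{L}_{r\partial_r}g_b$ cannot replace $o_0$, for two reasons.
\begin{itemize}
\item For a generic extension of $r\partial_r$ the tensor is not in $\ker P_{g_b}$. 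Then $\langle D\Phi(h_i),\tau\rangle\approx\langle h_i,P_{g_b}\tau\rangle$ is not small on the core.
\item Even for $X=\frac12\nabla u$, the non-decaying trace part $2g_b$ has $P_{g_b}(\chi g_b)\sim r_b^{-2}$ on the neck. Pairing this with a perturbation of size $t^{3/2}$ there (the best available, even after the refined gluing) over volume $\sim t^{-m/2}$ gives $O(t^{(5-m)/2})\gg t^2$. For $o_0$ the same term is $O(t^{5/2})$. Moreover, this trace part pairs to zero with the traceless obstruction $\sum_j\lambda_jo_j$, so it only adds noise.
\end{itemize}
With $o_0$ in hand, $\lambda_0$ is the $o_0$-component of the obstruction to solving $P_{g_b}h_2=\mu g_b$, given by \eqref{lambda_j}. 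This requires $h_2$ to be asymptotic to the Bianchi-gauged $H$ of Lemma \ref{lemmaH}, not to $-\frac13R_{ikjl}(0)x^kx^l$. For the latter, $\frac12\nabla^*_{g_E}\nabla_{g_E}$ gives $\frac{\mu}{3}g_E$ rather than $\mu g_E$, and $B_{g_E}\neq0$. The correction $-\frac{2\mu}{3(m+2)}(|x|^2\delta_{ij}+2x^ix^j)$ is invisible to $W^\infty$. It does, however, change the pointwise pairing with the $\mathcal V$-profile $mx^ix^jr^{-m-2}-\delta_{ij}r^{-m}$ from $\frac{\mu}{3}r^{2-m}$ to $-\frac{m-2}{m+2}\mu\,r^{2-m}$. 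This is exactly what produces the constant $\frac{\omega_{m-1}}{6m(m-2)|\Gamma|}$.

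\textbf{Non-integrable bubbles.} In Step 3 you bound $\langle D\Phi(h_i)+Q(h_i),o\rangle$ by ``the bounds on $h_i$''. A priori one only has $\|g_i-g_{t_i}\|_{\ctab}=o(1)$. Even this needs Ozuch's neck analysis (Theorem \ref{neckana}); Gromov--Hausdorff closeness plus smooth convergence away from the neck does not give it. Meanwhile the component of $h_i$ along the approximate kernel, i.e.\ the bubble parameter $v_i\in\SO(g_b)$, comes with no rate. The quadratic term then contains $\Psi(h_{v_i})$, the obstruction to integrability of $g_b$, of size $\sim\|v_i\|^l$ for some $l\geq2$. This can be comparable to or larger than $t_i^2\lambda_0$. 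So even with the right test tensor, your argument only covers integrable $g_b$.

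The paper closes this gap by showing that the leading Taylor coefficient $V$ of $s\mapsto\Psi(h_{sv})$ is $L^2$-orthogonal to $o_0$. Set $S^l=\left.\frac{d^l}{ds^l}\right|_{s=0}\Psi(h_{sv})-d\Psi_0\bigl(\left.\frac{d^l}{ds^l}\right|_{s=0}h_{sv}\bigr)$. Differentiating the contracted Bianchi identity along $h_{sv}$ gives $B_{g_b}S^l=0$. The first variation of $\int\s\,dV$ along $h_{sv}$ gives $\int_N\langle g_b,S^l\rangle\,dV=0$. Integrating $\langle\mathscr{L}_{\nabla u}g_b,S^l\rangle$ by parts, using $\Delta u=2m$, then gives $0$. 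Next, pairing \eqref{decom-obstruction} with $V_i/\|V_i\|$ gives $\|V_i\|_{L^2}=O(t_i^2)$, so the higher-order remainder is $O(t_i^2\|v_i\|)=o(t_i^2)$. Only then does pairing with $\chi_io_0$ force $\lambda_0=0$.

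Relatedly, $P_{g_0}$ is not invertible in general even for $\mu<0$, since $\SO(g_0)$ may be nonzero. So $\SO(g_0)$ must be put into $\TO$ and $g_0$ deformed as well, as in Lemma \ref{lem4.12}.
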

\begin{remark}
    Note that there is extra freedom for choices of geodesic coordinates at $p$ and optimal ALE coordinates of $(N,g_b)$. Different choices of coordinates cause the effects
    \[W_{ijkl}(0)\mapsto \tilde{W}_{abcd}(0)=W_{ijkl}(0)A_{ai}A_{bj}A_{ck}A_{dl},\]
    \[W^{\infty}_{ijkl}\mapsto \tilde{W}^{\infty}_{abcd}={W}^{\infty}_{ijkl}A'_{ai}A'_{bj}A'_{ck}A'_{dl},\]
    for some $A,A'\in\mathrm{O}(m)$. Our theorem claims that \eqref{explicit-obstruction} holds for coefficients in all such coordinates. In other words, there is a family of (not necessarily linearly independent) local obstructions, parametrized by $A^TA'\in\mathrm{O}(m)$.
\end{remark}

Now we discuss some applications of this theorem. It is known from \cite{BH,WangYin} that, if the blow-up limit $(N,g_b)$ is not flat, $\mathcal{V}$ is always strictly less than 0. As a corollary, since $\mu<0$, if $W(0)=0$, then \eqref{explicit-obstruction} is violated by all nontrivial blow-up limits $(N,g_b)$.  
\begin{theorem}\label{sphere}
    Suppose $(M_0^m,g_0)$ is a compact hyperbolic (constant sectional curvature $-1$) orbifold with a singular point modelled on $\R^m/\Z_2$. Then $(M_0,g_0)$ does not admit any resolution sequence.
\end{theorem}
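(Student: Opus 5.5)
Theorem \ref{sphere} follows from Theorem \ref{Maintheorem}, applied at the $\Z_2$-singular point $p$. Since $(M_0,g_0)$ has constant sectional curvature $-1$, it is Einstein with $\mu=-(m-1)<0$ and its Weyl tensor vanishes identically; in particular $W(0)=0$ in any geodesic coordinates at $p$. We argue by contradiction and assume a resolution sequence $\{(M_i,g_i)\}\subset\M(m,\Lambda,D,V,E)$ exists.

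By Theorem \ref{BKN}(3), after passing to a subsequence there are $p_i\to p$ and scales $t_i\to0$ such that $(M_i,t_i^{-2}g_i,p_i)$ converges to a Ricci-flat ALE limit $(N,g_b)$ asymptotic to $\R^m/\Z_2$. We need this limit to be smooth. A singular point of $N$ would have an orbifold group of order at most the volume ratio at the end, namely $2$. By the bubble tree structure, we may then pass to the deepest bubble, choosing $p_i$ and $t_i$ at the smallest scale where curvature concentrates. That deepest bubble is a smooth, nonflat Ricci-flat ALE manifold.

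Its group at infinity is a subgroup of $\Z_2$, and it cannot be trivial. Indeed, a Ricci-flat ALE manifold asymptotic to $\R^m$ is flat $\R^m$ by the volume rigidity of Bishop--Gromov, so such a bubble would not be a genuine bubble. Hence the deepest bubble is asymptotic to $\R^m/\Z_2$. Equivalently, one can argue directly: an intermediate orbifold bubble asymptotic to $\R^m/\Z_2$ with a $\Z_2$ point would have volume ratio $1/2$ both at infinity and at the singular point. Bishop--Gromov rigidity then forces it to be the flat cone $\R^m/\Z_2$, which is not a genuine bubble. So the rescaling at $p$ yields a smooth, nonflat blow-up limit $(N,g_b)$, as the third hypothesis of Theorem \ref{Maintheorem} requires.

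With all hypotheses verified, \eqref{explicit-obstruction} holds. Because $W(0)=0$, both Weyl-contraction terms vanish, leaving $\mu\mathcal{V}=0$, that is, $\mathcal{V}=0$. This contradicts the fact from \cite{BH,WangYin} that a nonflat Ricci-flat ALE manifold has $\mathcal{V}<0$. Hence $(M_0,g_0)$ admits no resolution sequence.

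The main obstacle is the smoothness step: justifying that the bubble tree at a $\Z_2$ point has a smooth, nonflat leaf with group exactly $\Z_2$ at infinity. This is the volume-ratio and Bishop--Gromov rigidity argument above. Once that holds, the rest is a direct substitution into \eqref{explicit-obstruction}.
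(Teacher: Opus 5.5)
Your proposal is correct and follows the paper's proof. You substitute $\mu=-(m-1)$ and $W(0)=0$ into \eqref{explicit-obstruction} to get $\mathcal{V}=0$, contradicting $\mathcal{V}<0$, and you obtain smoothness of the blow-up limit at the $\Z_2$ point from the same Bishop--Gromov volume-ratio argument ($1/|\Gamma|\ge 1/2$, with equality forcing flatness); the detour through the deepest bubble is unnecessary, since your direct argument already shows the first nonflat bubble at $p$ is smooth.
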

The advantage of $\Z_2$ singularity is that, the blow-up limit at such a singularity must be smooth (as we will see in the proof of Theorem \ref{sphere}). For general singularities modelled on $\R^m/\Gamma$, we need to treat bubble trees. One can see \cite{HO} for discussions on resolving $S^4/\Gamma$ by bubble trees.

Another application is that, in the special case where the blow-up limit is the space given by the Calabi ansatz: $(N,g_b)=(O_{\CP^{n-1}}(-n),g_{Cal})$, \cite{MV} has computed the same type of our obstruction as \eqref{explicit-obstruction}.

Our exposition systematically unifies the key results and arguments scattered across \cite{Ozu2,Ozu3}, within a consistent notational framework adapted to general dimensions. All technical arguments in this work closely follow the original approach of Ozuch, and no new foundational tools or improved techniques are introduced. Our presentation aims to provide a self-contained, unified higher-dimensional treatment that can serve as a convenient reference for future related studies.

\subsection{Outline of the paper}

\begin{itemize}
    \item In Section 2, we provide preliminaries for Ricci-flat ALE manifolds, which arises as blow-up limits of compact Einstein manifolds. In particular, we will give the definition of asymptotic Weyl curvature and renormalized volume, which appeared in the statement of Theorem \ref{Maintheorem}.
    \item Based on the neck analysis conducted in \cite{Ozu1}, the resolution sequence $\{(M_i,g_i)\}$ is close to the gluing of $(M_0,g_0)$ and $(N,g_b)$ in some weighted $C^{2,\alpha}$-topology. (For the explicit statement, see Theorem \ref{neckana}.) Therefore, in the second part of Section 2, we set up the metrics and function spaces on the glued manifolds.
    \item Therefore, to prove Theorem \ref{Maintheorem}, we actually seek necessary conditions for the existence of an Einstein metric near the glued manifold $(M,g_t)$. To this end, we employ the deformation theory of Einstein manifolds. First, we establish a local slice theorem for the Bianchi gauge, which only applies to negative Einstein constants. This is done in Section 3.
    \item Next, we search for Einstein metrics near the glued metric $g_t$ that  lie in the Bianchi gauge of $g_t$. This is equivalent to deforming $g_t$ into a solution of
    \[\Ric(g)- \frac{1}{m }\left( \frac{1}{\V(g)} \int_M \s_g d\V_{g} \right) g + \delta_{\gtd}^* B_{g_t}(g)=0,\]
    which is a nonlinear elliptic equation. The infinitesimal deformation space is given by the kernel of its linearization, and the obstruction space is given by its cokernel. 
    \item Since linearized operator is self-adjoint, the infinitesimal deformation space and the obstruction space coincide. In Section 4, we provide a proper approximation of this space on the glued manifold, which is more explicit than the genuine obstruction space. Moreover, we analyze the linearized operator on the glued manifolds, a central step in deformation theory. 
    \item In Section 5, we solve the (obstructed) nonlinear equation above using the implicit function theorem and Lyapunov-Schmidt reduction. This step serves to describe the moduli space of genuine deformations, commonly referred to as the Kuranishi model in deformation theory. There is an obstructed Einstein metric corresponding to each infinitesimal deformation, and the genuine Einstein deformations correspond to the zeros of the Kuranishi map.
    \item Subsequently, we refine the gluing metric using (obstructed) Einstein deformations on the compact orbifold and the blow-up limit. Further more, since the Ricci curvature vanishes on the blow-up limit, which will cause a large error in the general case, we will add a curvature term to the blow-up limits by solving the linearized Ricci equation on the blow-up limit. We then demonstrate that the refined gluing metric provides a much better approximation of the resolution sequence. This is detailed in Section 6.
    \item Summarizing the discussions above, we formulate an explicit obstruction for admitting a resolution sequence, consisting of obstructions to Einstein deformations and obstructions to adding curvature on blow-up limits. By carefully analyzing these obstructions, we prove Theorems \ref{Maintheorem} and \ref{sphere}. This is done in Section 7. We also explain why the obstruction in \cite{MV} represents a special case of our obstruction.
\end{itemize}

\subsection{Index of notations} We list some notations appearing repeatedly in this paper.
\begin{itemize}
    \item $r_b$: Radius function on Ricci-flat ALE manifold $(N,g_b)$, defined in Notation \ref{r_b};
    \item $C^{k,\alpha}_\beta(g_b)$: Weighted H\"older space on Ricci-flat ALE manifold $(N,g_b)$, defined above Notation \ref{bHolder};
    \item $\rtd$: Radius function on $M_0\#N$, defined in \eqref{r_t};
    \item $\gtd$: Gluing metric of $(M_0,g_0)$ and $(N,g_b)$, with parameters $0<t=\frac{\delta^2}{16}<\delta^2\ll1$, defined in \eqref{naive-gluing};
    \item $C^{k,\alpha}_{\beta,\beta'}(\gtd)$: Weighted H\"older space on $(M_0\#N,\gtd)$, defined above Remark \ref{rmk3.1};
    \item $r_tC^{k,\alpha}_{\beta,*}$, $C^{k,\alpha}_{\beta,*}$: Weighted decoupling norms on vector fields and symmetric 2-tensors, defined in Definitions \ref{vector-norms}, \ref{tensor-norms};
    \item $P_{g}$: An elliptic linear operator related to infinitesimal Einstein deformation, defined in Notation \ref{P_g};
    \item $\SO(g_b)$: Decaying infinitesimal Einstein deformation in Bianchi gauge, defined in Definition \ref{bIED};
    \item $\SO(g_0)$: Volume preserving infinitesimal Einstein deformation in Bianchi gauge, defined in Notation \ref{IED0};
    \item $\TO(\gtd)$, $\TO(\gtd)^\perp$: Approximate infinitesimal Einstein deformations of $\gtd$, and its $L^2(\gtd)$ orthogonal complement, defined in Definition \ref{AIED};
    \item $\pi$, $\pi^\perp$: $L^2$-orthogonal projection onto $\TO(\gtd)$ and $\TO(\gtd)^\perp$, defined in Notation \ref{pi};
    \item $\Phi$, $\Psi$, $\Psi'$: Einstein operator with Bianchi gauge on $\gtd$, $g_b$, $g_0$, defined above Proposition \ref{Phi=0}, Proposition \ref{bdeform}, Proposition \ref{0deform};
    \item $h_{v}$, $h_{w}$, $g_{b,v}$, $g_{0,w}$: $h_{v}$ and $h_{w}$ are obstructed Einstein deformations of $g_b$, $g_0$, provided by Proposition \ref{bdeform}, Proposition \ref{0deform}. $g_{b,v}=g_b+h_v$, $g_{0,w}=g_0+h_w$.
    \item $H$, $h_2$: $H$ is the expansion of $g_0$ at orbifold point $p$, in a Bianchi coordinate, defined in \eqref{H}. $h_2$ is a solution of linearized Ricci equation, asymptotic to $H$, provided by Proposition \ref{h2};
    \item $\gtdp$: Gluing metric of $(M_0,g_{0,w})$ and $(N,g_{b,v}+t^2h_2)$, defined in \eqref{better-gluing}.
\end{itemize}

{\bf Acknowledgement}:
The author is grateful to the anonymous reviewer for careful reading and for valuable suggestions and comments, which have helped to correct several inaccuracies in the original arguments and to improve the presentation of this paper.

The author would like to thank his advisor, Professor Bing Wang, for guidance throughout this work, as well as for his constant support, encouragement, and helpful discussions during the research. The author is grateful to Professor Yu Li for reading the early version of this article and providing helpful suggestions.

\section{Preliminaries and Set-up}
\subsection{ALE manifolds}
\subsubsection{ALE coordinates and weighted norms}
\begin{definition}[ALE spaces]\label{ALEdefn}
    Given a complete Riemannian manifold/orbifold $(N^m,g)$ with finite singularities, we call it \textbf{asymptotically locally Euclidean (ALE)} of order $\tau>0$ if
    \begin{itemize}
        \item there exists a finite subgroup $\Gamma$ of $\mathrm{O}(m)$, $B(0,R)\subset\R^m/\Gamma$ and a compact subset $K\subset N$, such that there is a diffeomorphism 
        $$\varphi:(\R^m\setminus B(0,R))/\Gamma\to N\setminus K;$$
        \item under the coordinate $\varphi$, the metric is asymptotic to Euclidean metric $g_E$ in the following sense: for any $k\in\mathbb{N}\cup\{0\}$,
        \[|\nabla^k_{g_E}(\varphi^*g-g_E)|_{g_E}=O(|x|^{-\tau-k})\quad\text{as }|x|\to\infty.\]
    \end{itemize}
\end{definition}
\begin{notation}\label{r_b}
    For an ALE manifold \((N,g)\) with a given ALE coordinate chart \(\varphi: (\mathbb{R}^m \setminus B(0,R))/\Gamma \to N \setminus K\), we define a radius function \(r_b \in C^\infty(N)\) as follows. For \(x \in N \setminus K\), define \(r_b(x) := |\varphi^{-1}(x)|_{g_{E}}\); then extend it to be a smooth function on the whole manifold \(N\), such that \(1 < r_b(x) \leq R\) for all \(x \in K\).
\end{notation}

Now we define weighted Hölder norms on ALE manifolds. Given \(k \in \mathbb{N} \cup \{0\}\), \(\alpha\in(0,1],\, \beta \in \mathbb{R}_{>0}\), for a tensor field \(s\),
\[
[s]_{C^\alpha(g)}(x) := \sup_{\substack{y \in T_xM \\ |y| \leq \mathrm{inj}_g(x)}} \frac{|\exp_x^* s(0) - \exp_x^* s(y)|_{\exp_x^* g}}{|y|_{\exp_x^* g}^\alpha},
\]
\[
|s|_{C_\beta^k(g)}(x) := r_b(x)^\beta \sum_{i=0}^k r_b(x)^i |\nabla^i s(x)|_g,
\]
\[
\|s\|_{C_\beta^k(g)} := \sup_{x \in N} |s|_{C_\beta^k(g)}(x), \quad \|s\|_{C_\beta^{k,\alpha}(g)} := \sup_{x \in N} \left( |s|_{C_\beta^k(g)}(x) + r_b(x)^{k+\alpha+\beta} [\nabla^k s]_{C^\alpha(g)} \right).
\]
The space of \(C_\beta^k\) and \(C_\beta^{k,\alpha}\) tensor fields are defined as the completion of \(C_c^\infty\) tensor fields under the corresponding norms.

\begin{notation}\label{bHolder}
    Given ALE coordinate chart $\varphi:(\R^m\setminus B(0,R))/\Gamma\to N\setminus K$, for a function or a tensor field $s$ on $N$, we say
    $$s=O(r^{-\beta})\quad\text{if }s\in C^{k}_{\beta}, k=0,1,2,\dots$$
\end{notation}

\begin{lemma}[{\cite[Theorem 8.3.6]{Joyce}}]\label{Fredholm-on-blow-up limit}
    Let $k\geq2$ be an integer and $\alpha\in(0,1)$. For the Beltrami-Laplacian (acting on functions) $\Delta:C^{k,\alpha}_{\beta}\to C^{k-2,\alpha}_{\beta+2}$, we have:
    \begin{itemize}
        \item[(1)] When $\beta\in(0,m-2)$, $\Delta$ is an isomorphism with a bounded inverse. More explicitly, there exists $C>0$ such that for each $f\in C^{k-2,\alpha}_{\beta+2}$, there exists a unique $u\in C^{k,\alpha}_{\beta}$ such that $\Delta u=f$, and 
        \[\|u\|_{C^{k,\alpha}_{\beta}}\leq C\|f\|_{C^{k-2,\alpha}_{\beta+2}}.\]
        \item[(2)] When $\beta\in(m-2,m-1)$, there exist $C_1,C_2>0$ such that for each $f\in C^{k-2,\alpha}_{\beta+2}$, there exists a unique $u\in C^{k,\alpha}_{m-2}$ such that $\Delta u=f$. Moreover, 
        \[u=br^{2-m}+v,\]
        where
        \[b=\frac{1}{2-m}\mathrm{Area}_{g_E}\left(S^{m-1}/\Gamma\right)^{-1}\int_Nfd\V_g,\quad v\in C^{k,\alpha}_{\beta},\]
        satisfying
        \[|b|\leq C_1\|f\|_{C^0_{\beta+2}},\quad \|v\|_{C^{k,\alpha}_{\beta}}\leq C\|f\|_{C^{k-2,\alpha}_{\beta+2}}.\]
    \end{itemize}
\end{lemma}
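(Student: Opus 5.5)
This is Joyce's weighted Fredholm result for the Laplacian on an ALE space, and I would prove it with the standard Lockhart–McOwen/Bartnik weighted analysis.

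\textbf{Step 1: Fredholm property away from indicial roots.} On the end $(\R^m\setminus B(0,R))/\Gamma$ the metric is $g_E+O(r^{-\tau})$ with all derivatives. Hence $\Delta_g-\Delta_{g_E}$ is a small perturbation in the weighted spaces there. Harmonic functions on $\R^m\setminus\{0\}$ that are homogeneous of degree $\lambda$ exist exactly for $\lambda\in\{k,\,2-m-k:k\ge0\}$. With the convention that $C^{k,\alpha}_\beta$ means decay like $r^{-\beta}$, the exceptional weights are therefore $\beta\in\{2-m-k\}\cup\{m-2+k\}$. So $\Delta:C^{k,\alpha}_\beta\to C^{k-2,\alpha}_{\beta+2}$ is Fredholm for every $\beta\notin\{\dots,-1,0\}\cup\{m-2,m-1,\dots\}$, and in particular for $\beta\in(0,m-2)$ and $\beta\in(m-2,m-1)$.

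The Fredholm estimate $\|u\|_{C^{k,\alpha}_\beta}\le C(\|\Delta u\|_{C^{k-2,\alpha}_{\beta+2}}+\|u\|_{C^0(K')})$ comes from two ingredients:
\begin{itemize}
\item interior Schauder estimates on annuli, rescaled to unit size;
\item a barrier or contradiction argument on the end, which uses that $r^{-\beta}$ is not an indicial rate.
\end{itemize}

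\textbf{Step 2: case (1).} For injectivity, take $u\in C^{2,\alpha}_\beta$ with $\beta>0$ and $\Delta u=0$. Then $u\to0$ at infinity, so $u\equiv0$ by the maximum principle.

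For surjectivity, the cokernel is identified by duality with the kernel of $\Delta$ on the dual weight $m-2-\beta$. This weight also lies in $(0,m-2)$, so that kernel vanishes by the same maximum principle argument. Hence $\Delta$ is an isomorphism on $(0,m-2)$. The bounded inverse then follows from the open mapping theorem, or directly by the contradiction argument in Step 1 combined with injectivity.

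\textbf{Step 3: case (2).} Given $f\in C^{k-2,\alpha}_{\beta+2}$ with $\beta\in(m-2,m-1)$, we also have $f\in C_{\beta'+2}$ for any $\beta'<m-2$. Case (1) then gives a unique solution $u\in C^{k,\alpha}_{\beta'}$.

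To refine the asymptotics, cross the indicial root $m-2$. Choose a cutoff $\chi$ supported on the end and set $w=u-b\chi r^{2-m}$. Then $\Delta w=f-b\Delta(\chi r^{2-m})$, and
\[\Delta(\chi r^{2-m})=\Delta_{g_E}(\chi r^{2-m})+O(r^{-m-\tau}),\]
so the right-hand side lies in $C_{\beta+2}$, provided $\tau$ is large enough or we iterate.

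The key point is the choice of $b$. A solution decaying faster than $r^{2-m}$ requires the flux condition $\int_N \Delta w=0$. Integrating $\Delta(\chi r^{2-m})$ over $N$ by the divergence theorem gives the boundary flux $(2-m)\,\mathrm{Area}(S^{m-1}/\Gamma)$. Matching fluxes gives the stated formula for $b$, and then $|b|\le C\|f\|_{C^0_{\beta+2}}$ because $\beta+2>m$ makes $f$ integrable.

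With this $b$, the weight-$\beta$ problem has index $-1$: the cokernel is spanned by the constants, which are the kernel at the dual weight $m-2-\beta\in(-1,0)$. The flux condition puts the right-hand side in the image, so $w=v\in C^{k,\alpha}_\beta$ with the estimate. Uniqueness in $C^{k,\alpha}_{m-2}$ follows from Step 2.

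\textbf{Main obstacle.} The delicate step is the precise identification of the cokernel at weight $\beta\in(m-2,m-1)$ as the constants, together with the flux computation, when $g$ is only asymptotically Euclidean. If $\tau$ is small, the error $O(r^{-m-\tau})$ must first be absorbed by repeating the argument at intermediate weights.
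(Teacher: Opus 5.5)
The paper gives no proof of this lemma and simply cites Joyce. Your sketch is the standard Lockhart--McOwen/Bartnik weighted analysis behind that result, and its main line is correct: Fredholmness off the indicial roots, the maximum principle for the kernels, an index drop of one across $\beta=m-2$ whose obstruction is pairing with the constants, and the flux computation for $b$.

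Two corrections are needed.

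\textbf{Exceptional weights.} They are $\beta\in\{-k\}\cup\{m-2+k\}$, not $\{2-m-k\}\cup\{m-2+k\}$. Your following sentence already has the right set.

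\textbf{Small ALE order.} Your remark that a small ALE order $\tau$ can be handled by ``iterating at intermediate weights'' is wrong. The correction that makes $r^{2-m}$ harmonic for $g$ is genuinely of order $r^{2-m-\tau}$. For example, on the end $g=(1+r^{-\tau})^{4/(m-2)}g_E$ it is $-\frac{2(m-2)}{m-2+\tau}r^{2-m-\tau}+\dots$. So if $\tau<\beta-(m-2)$ and $\int_N f\,d\V_g\neq0$, then $u-br^{2-m}\notin C^{k,\alpha}_\beta$, and no iteration can remove this term. Part (2) therefore needs $\tau\geq\beta-(m-2)$, for instance $\tau\geq1$.

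This is harmless in the paper. In the coordinates of Theorem \ref{ALEcoor} one has $g=g_E+O(r^{-m})$, so your claim $\Delta(\chi r^{2-m})\in C^{k-2,\alpha}_{\beta+2}$ holds directly, with no iteration.
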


\subsubsection{Intrinsic geometric quantities}
For any Ricci flat ALE manifold $(N^m,g)$, it was proved by \cite{WangYin} that, an ALE coordinate can be further refined. In the refined ALE coordinate $\varphi$, they give an explicit expansion of $\varphi^*g$, as stated below. Also see \cite{BH} for the case $m=4$.
\begin{theorem}[{\cite[Theorems 1.2, 1.5, 1.7]{WangYin}}]\label{ALEcoor}
    Consider an arbitrary Ricci flat ALE manifold $(N^m,g)$.
    \begin{itemize}
        \item[(1)] There exists an ALE coordinate chart $$\varphi:(\R^m\setminus B(0,R))/\Gamma\to N\setminus K$$ such that:
    \begin{itemize}
        \item it is Bianchi, that is, $B_{g_E}(\varphi^*g)=0$;
        \item the metric tensor has an expansion $$g_{ij}=\delta_{ij}+W_{ikjl}\frac{x^kx^l}{r^{m+2}}+O(r^{-m-1}),$$where $W_{ikjl}$ are components of a Weyl tensor.
    \end{itemize}
        \item[(2)] For any coordinate chart $\varphi$ given by (1), we denote $\Omega_r:=N\setminus\varphi(\{|x|>r\})$ and $B_r:=\{|x|<r\}\subset\R^m/\Gamma$. The limit
        $$\mathcal{V}:=\lim_{r\to\infty}\V_g(\Omega_r)-\V_{g_E}(B_r)$$
        exists.
        \item[(3)] Moreover, the Weyl tensor $W$ and $\mathcal{V}$ in (1)(2) are independent of the choice of ALE coordinate charts. To be explicit, if there exist another coordinate system $(y^a)$ on $N\setminus K$ and Weyl tensor $\tilde{W}$ such that
    \[g_{ab}=\delta_{ab}+\tilde{W}_{acbd}\frac{x^cx^d}{r^{m+2}}+O(r^{-m-1}),\]
    then there exists $A\in\mathrm{O}(m)$ such that $W_{ijkl}=\tilde{W}_{abcd}A_{ai}A_{bj}A_{ck}A_{dl}$, and the two coordinate systems $(x^i)$ and $(y^a)$ give the same limit $\mathcal{V}$.
        \item[(4)] Finally, $\mathcal{V}\leq0$, and $\mathcal{V}=0$ if and only if $(N,g)$ is isometric to $(\R^m/\Gamma,g_E)$.
    \end{itemize}
\end{theorem}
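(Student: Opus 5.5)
Theorem \ref{ALEcoor} is quoted from \cite{WangYin}, so within this paper we take it as given. If we had to prove it, we would argue as follows. We would handle (1), (2), (3) by linear asymptotic analysis of the Ricci-flat equation in the Bianchi gauge at infinity, and (4) by a volume comparison argument.

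\textbf{Step 1: gauge fixing.} Start from any ALE chart of some order $\tau>0$ (Bando--Kasue--Nakajima). Seek a corrected chart $\varphi\circ(\mathrm{id}+X)$ with $B_{g_E}(\varphi^*g)=0$. To leading order this is the equation $\Delta_E X=-B_{g_E}(h)+(\text{quadratic})$ for a vector field $X$, where $h=\varphi^*g-g_E$. We would solve it componentwise with Lemma \ref{Fredholm-on-blow-up limit}(1), followed by a fixed-point argument in weighted H\"older spaces. This gives a Bianchi chart with $h=O(r^{-\tau})$.

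\textbf{Step 2: the expansion in (1).} In Bianchi gauge the equation $\Ric(g_E+h)=0$ reads $\Delta_E h=Q(h)$. Here $\Delta_E$ is the componentwise flat Laplacian and $Q$ is quadratic, of the form $h*\partial^2h+\partial h*\partial h$, so $Q(h)=O(r^{-2\tau-2})$. We would bootstrap the decay rate. Whenever the current rate $\beta$ lies in $(0,m-2)$, Lemma \ref{Fredholm-on-blow-up limit}(1) applied to the right-hand side improves $h$ to $O(r^{-\min(2\tau,m-2)})$, up to log-free corrections, because there are no decaying $\Gamma$-invariant harmonic tensors of homogeneity in $(2-m,0)$. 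Crossing the critical rates uses Lemma \ref{Fredholm-on-blow-up limit}(2) and its analogue for the rate $m-1$. This yields
\[
h=c_{ij}r^{2-m}+a_{ijk}\,\partial_k r^{2-m}+b_{ijkl}\,\partial_k\partial_l r^{2-m}+O(r^{-m-1}).
\]
The lower-order coefficients are then eliminated:
\begin{itemize}
\item the linearized Ricci and Bianchi conditions applied to the $r^{2-m}$ term force $c_{ij}=0$;
\item $\Gamma$-invariance (for nontrivial $\Gamma$), or a translation of the chart, removes the $r^{1-m}$ term;
\item at order $r^{-m}$, the linearized Einstein equation together with the Bianchi condition forces $b$ to be trace-free with the algebraic symmetries of curvature, i.e.\ a Weyl tensor. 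This gives the form $W_{ikjl}x^kx^l r^{-m-2}$.
\end{itemize}

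\textbf{Step 3: renormalized volume and uniqueness, i.e.\ (2) and (3).} We have $\sqrt{\det g}=1+\tfrac12\tr h+O(|h|^2)$. The order-$r^{-m}$ part of $\tr h$ is $W_{ikil}x^kx^l r^{-m-2}=0$ because $W$ is trace-free. Hence $\sqrt{\det g}-1=O(r^{-m-1})$, which is integrable against $r^{m-1}\,dr$. So $\V_g(\Omega_r)-\V_{g_E}(B_r)$ is Cauchy as $r\to\infty$ and the limit $\mathcal{V}$ exists.

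For (3), two such charts differ by a diffeomorphism of the end preserving Bianchi gauge to leading order. The transition map therefore satisfies a harmonic-type equation and has the form $y=Ax+b+O(r^{1-m})$ with $A\in \mathrm{O}(m)$. Comparing the $r^{-m}$ terms gives the stated transformation law for $W$. For $\mathcal{V}$, the two exhaustions differ by a shell of width $O(r^{1-m})$ plus a translation. The translation contributes an odd integrand, plus the trace-free $W$-term, and both contributions tend to zero. So $\mathcal{V}$ is unchanged.

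\textbf{Step 4: the sign (4), the expected main obstacle.} Bishop--Gromov with $\Ric=0$ gives that $\V B_\rho(p)/\rho^m$ is nonincreasing, with limit $\omega_m/|\Gamma|$, where $\omega_m$ denotes the volume of the Euclidean unit ball; hence $\V B_\rho(p)\le \omega_m\rho^m/|\Gamma|$. The difficulty is converting this into a statement about the coordinate exhaustions $\Omega_r$. A crude comparison $d(p,\cdot)=r+O(1)$ produces an error of order $r^{m-1}$, which diverges.

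The plan is to use the expansion of Step 2 to show that distance spheres are, up to $O(r^{1-m})$, Euclidean spheres centred at a fixed point $x_0$. The geodesic equation with metric error $O(r^{-m})$ gives Busemann-type control. The net volume error can then be computed as the integral of the $W$-term over spheres, which vanishes because $W$ is trace-free. This would give $\mathcal{V}=\lim_\rho\big(\V B_\rho(p)-\omega_m\rho^m/|\Gamma|\big)\le 0$.

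Equality forces equality in Bishop--Gromov, hence a flat cone $\R^m/\Gamma$. If the sphere-by-sphere cancellation turns out too delicate, we would instead follow \cite{BH}: write the volume defect as a boundary flux and use a monotone quantity built from $\Delta d^2$.
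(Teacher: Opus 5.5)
The paper does not prove this statement; it quotes it from \cite{WangYin} (and \cite{BH} for $m=4$). So your sketch has to stand on its own, and its Step 4 fails for two reasons.

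First, the Bishop--Gromov inequality goes the other way. $\V B_\rho(p)/\rho^m$ is nonincreasing and tends to $\omega_m/|\Gamma|$ as $\rho\to\infty$, so $\V B_\rho(p)\ge\omega_m\rho^m/|\Gamma|$ for every $\rho$. This is the direction the paper itself uses in the $\Z_2$ case.

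Second, geodesic spheres about $p$ are not coordinate spheres up to $O(r^{1-m})$. In general $d(p,x)-|x|$ tends to an $O(1)$ function $f$ on $S^{m-1}/\Gamma$ that reflects the global geometry. Already for Eguchi--Hanson, the distance from the bolt to $\{|x|=R\}$ is $R-ca+o(1)$ with $c\neq0$. Consequently $\V B_\rho(p)-\omega_m\rho^m/|\Gamma|=-\rho^{m-1}\int_{S^{m-1}/\Gamma}f\,d\sigma+o(\rho^{m-1})$. Point-centred comparison therefore only gives $\int f\le 0$ and says nothing about $\mathcal{V}$. Your fallback with $\Delta d^2$ runs into the same $O(1)$ discrepancy between $d$ and $r$.

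The missing idea is to replace $d^2$ by the Poisson solution $u$ of Proposition \ref{Poisson-solution}, which uses only (1)--(2).
\begin{itemize}
\item Bochner's formula with $\Ric=0$ and $\Delta u=2m$ gives $\Delta\bigl(\tfrac12|\nabla u|^2-2u\bigr)=|(\nabla^2u)^\circ|^2\ge0$.
\item Combine the expansion $u=r^2+b\,r^{2-m}+O(r^{1-m+\varepsilon})$, where $b=\frac{2m}{2-m}\mathrm{Area}_{g_E}(S^{m-1}/\Gamma)^{-1}\mathcal{V}$, with $W_{ikjl}x^ix^k=0$, which gives $g^{ij}x^ix^j=r^2+O(r^{1-m})$. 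This yields $\tfrac12|\nabla u|^2-2u=2(1-m)b\,r^{2-m}+O(r^{1-m+\varepsilon})$.
\item Integrating over $\Omega_R$ and letting $R\to\infty$ gives $\int_N|(\nabla^2u)^\circ|^2\,d\V_g=-4m(m-1)\mathcal{V}$.
\end{itemize}
Hence $\mathcal{V}\le0$. Equality holds iff $\nabla^2u=2g$, which makes $N$ a flat cone $\R^m/\Gamma$.

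Steps 2--3 have a second gap: they treat Bianchi gauge as a complete normalization, and it is not. Change the chart by $x\mapsto x+X$ with $X$ harmonic and homogeneous of degree $1-m$, i.e.\ $X_i=e_{ik}\partial_kr^{2-m}$. This preserves $B_{g_E}$ to leading order and adds $\mathscr{L}_Xg_E=2\delta^*_{g_E}X$ to the $r^{-m}$ term.

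For example, $\nabla^2 r^{2-m}$ is trace-free, harmonic, in Bianchi gauge and linearized Ricci-flat. It is not of Weyl type: its contraction with $x$ is $(1-m)\nabla r^{2-m}\neq0$, whereas $W_{ikjl}x^ix^kx^l=0$. So the linear conditions do not force the Weyl form. You must show that every admissible $r^{-m}$ term is of Weyl type modulo $\delta^*_{g_E}X$. Then use this residual freedom, re-fixing the gauge at lower order, to reach the chart in (1).

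The same freedom is where your argument for (3) breaks. Take $X=c\nabla r^{2-m}$. It leaves $\tr h$, and hence the volume density, unchanged. But it moves coordinate spheres radially by $c(2-m)r^{1-m}+\dots$, which shifts $\V_g(\Omega_r)$ by the constant $c(m-2)\,\mathrm{Area}_{g_E}(S^{m-1}/\Gamma)$, up to sign and $o(1)$. So a shell of width $O(r^{1-m})$ contributes $O(1)$, not $o(1)$.

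$\mathcal{V}$ is chart-independent only because, when both charts are in Weyl form, no nonzero $\delta^*_{g_E}X$ with $X$ of degree $1-m$ can be of Weyl type. Indeed, $x^i(\delta^*X)_{ij}=0$ forces $X=\frac1m\nabla(x\cdot X)$, and homogeneity then gives $X=0$. The transition map is therefore $y=Ax+b+o(r^{1-m})$.

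Finally, a translation cannot remove an $r^{1-m}$ term once the $r^{2-m}$ term has vanished. The Bianchi-compatible $r^{1-m}$ terms are exactly $\delta^*_{g_E}(p\,r^{2-m})$ for a constant vector $p$. These are killed by $\Gamma$-invariance, because $\Gamma$ acts freely on $S^{m-1}$ and is nontrivial unless $N$ is flat.
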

\begin{definition}\label{WVdef}
    We call the Weyl tensor given by the theorem above as the \textbf{asymptotic Weyl curvature} of $(N,g)$, and we denote it by $W^\infty$. The limit $\mathcal{V}$ in the above theorem is called the \textbf{renormalized volume} of $(N,g)$.
\end{definition}
A wonderful observation of \cite{BH} is that, the renormalized volume appears in the asymptotic expansion of Poisson solution. Now we make it clear in general dimensions.
\begin{proposition}\label{Poisson-solution}
    For any Ricci flat ALE manifold $(N^m,g)$, fix an ALE coordinate chart given by Theorem \ref{ALEcoor}. There exists a unique solution to
\[
\begin{cases}
\Delta u = 2m, \\
u = r^2 + o(1).
\end{cases}
\]
Moreover, $u$ has the following asymptotic expansion on $N\setminus K$:
\[
u = r^2 + \frac{2m}{2 - m} \, \mathrm{Area}_{g_E}\left(S^{m-1}/\Gamma\right)^{-1}\mathcal{V}\cdot r^{2 - m} + O(r^{1 - m+\varepsilon}).
\]
\end{proposition}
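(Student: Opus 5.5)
Set $f:=\Delta u-2m$ for a candidate solution and aim to reduce everything to Lemma \ref{Fredholm-on-blow-up limit}(2). Fix a cutoff $\chi$ equal to $1$ on $\{r_b\ge 2R\}$ and supported in $N\setminus K$, and write $u=\chi r^2+w$. In the coordinates of Theorem \ref{ALEcoor}(1), compute $\Delta_g(r^2)$ from $g_{ij}=\delta_{ij}+W_{ikjl}x^kx^l r^{-m-2}+O(r^{-m-1})$. We have $\Delta_g(r^2)=g^{ij}(2\delta_{ij}-2\Gamma^k_{ij}x_k)$. The leading correction to $g^{ij}\delta_{ij}$ is $-W_{ikil}x^kx^lr^{-m-2}=0$, since $W$ is traceless. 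The term $g^{ij}\Gamma^k_{ij}$ is, to leading order, the Bianchi/harmonic gauge quantity $-(\divg h-\tfrac12 d\,\tr h)$. This vanishes up to $O(r^{-m-2})$ because $B_{g_E}(\varphi^*g)=0$; alternatively, one can check directly that the $W$-term is divergence free and traceless. Hence $f_0:=2m-\Delta(\chi r^2)=O(r^{-m-1})$ at infinity and is compactly supported otherwise. In particular $f_0\in C^{k,\alpha}_{\beta+2}$ for every $\beta<m-1$, and we choose $\beta\in(m-2,m-1)$.

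Lemma \ref{Fredholm-on-blow-up limit}(2) then gives a unique $w=b\,r^{2-m}+v$ with $\Delta w=f_0$ and $v\in C^{k,\alpha}_\beta$, that is, $v=O(r^{-\beta})=O(r^{1-m+\varepsilon})$ with $\varepsilon=m-1-\beta$. This yields existence and the shape of the expansion. Uniqueness holds because the difference of two solutions is harmonic and $o(1)$, hence zero by the maximum principle, or equivalently by part (1) of the lemma.

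It remains to identify $b=\frac{1}{2-m}\mathrm{Area}(S^{m-1}/\Gamma)^{-1}\int_N f_0$, i.e.\ to show that $\int_N f_0\,d\V_g=2m\,\mathcal{V}$; this is the main step. Because $f_0=2m-\Delta(\chi r^2)$, the divergence theorem on $\Omega_r$ gives
\[\int_{\Omega_r}f_0=2m\V_g(\Omega_r)-\int_{\partial\Omega_r}\partial_\nu(r^2)\,dA_g.\]
The boundary term equals $2r\cdot\mathrm{Area}_g(\partial\Omega_r)$ up to corrections. I would check that these corrections are $o(1)$: the deviation of $g(\nabla r^2,\nu)$ and of the area element from the Euclidean ones is of order $W\cdot x\,x\,r^{-m-2}$, and integrated against $r^{m-1}$ this gives $O(r^{-m}\cdot r^{2}\cdot r^{m-1}\cdot r^{-1})$-type terms. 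The $O(r^{1-m})$ piece produces $o(1)$. The $W$-piece produces a term of order $r^0$, but its integrand on the sphere is $W_{ikjl}\theta^i\theta^j\theta^k\theta^l$ or a trace of $W$, and both vanish by the antisymmetry of $W$ and its tracelessness. Thus the boundary term is $2r\cdot r^{m-1}\mathrm{Area}(S^{m-1}/\Gamma)+o(1)=2m\V_{g_E}(B_r)+o(1)$. Letting $r\to\infty$ and using Theorem \ref{ALEcoor}(2) gives $\int_N f_0=2m\mathcal{V}$, and the stated coefficient follows.

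The delicate point is verifying that the order-$r^{-m}$ Weyl terms contribute nothing, both in $\Delta r^2$ (where the gauge condition is needed) and in the flux. I expect this to follow from the Weyl symmetries, but it must be checked carefully in order to place $f_0$ in the weight range $(m,m+1)$ required by the lemma.
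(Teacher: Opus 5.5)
Your proposal is correct and follows essentially the same route as the paper: you show $2m-\Delta_g r^2=O(r^{-m-1})$, apply Lemma \ref{Fredholm-on-blow-up limit}(2) to the remainder, and identify $b$ by integrating $2m-\Delta_g r^2$ over $\Omega_r$ and comparing the boundary flux with $2m\V_{g_E}(B_r)$. You also spell out why the Weyl terms drop out (tracelessness and antisymmetry of $W$), both in $\Delta_g r^2$ and in the flux, which the paper uses only implicitly; one small slip is that the remainder contribution to the flux comes from the $O(r^{-m-1})$ error in $g$ (giving $O(r^{-1})$), not from an ``$O(r^{1-m})$'' piece, which would not be $o(1)$.
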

\begin{proof}
    The uniqueness of $u$ is guaranteed by maximum principle, so it remains to prove the existence and asymptotic expansion.

    With help of the expansion in Theorem \ref{ALEcoor}(2),
\[
\Delta_gr^2=g^{ij}\left(\partial_i\partial_j-\Gamma_{ij}^k\partial_k\right)r^2=2m+O(r^{-m-1}).\]
So we only need to solve $v := u - r^2 = o(1)$, which satisfies 
\begin{equation}\label{equ2.2}
    \Delta_gv =2m-\Delta_gr^2= O(r^{-m-1}).
\end{equation}
Fix any $\varepsilon > 0$, lemma \ref{Fredholm-on-blow-up limit} tells that there exists $v =b r^{2 - m} + O(r^{1 - m + \varepsilon})$ satisfying \eqref{equ2.2}. Moreover, exhausting $N$ by $\Omega_r = N\setminus \varphi(\{|x|>r\})$, we have
\begin{align*}
    (2-m) \mathrm{Area}_{g_E}\left(S^{m-1}/\Gamma\right)\cdot b=&\int_N\Delta_gv\,d\V_g\\
    =&\lim_{r\to\infty}\int_{\Omega_r}(2m-\Delta_gr^2)d\V_g\\
    =&\lim_{r\to\infty}2m\V_g(\Omega_r)-\int_{\partial \Omega_r}\langle \nabla r^2, n \rangle (n\lrcorner d\V_g)\\
    =&\lim_{r\to\infty}2m\V_g(\Omega_r)-\int_{\partial B_r} \left(2r + O(r^{-m})\right)(\partial_r\lrcorner d\V_{g_E})\\
    =&\lim_{r\to\infty}2m\V_g(\Omega_r)-2m\V_{g_E}(B_r)+O(r^{-1})\\
    =&2m\mathcal{V}.
\end{align*}
Hence
\[
u=r^2+v=r^2+b r^{2 - m} + O(r^{1 - m + \varepsilon})
\]
with $b = \frac{2m}{2 - m}\mathrm{Area}_{g_E}\left(S^{m-1}/\Gamma\right)^{-1}\mathcal{V}$.
\end{proof}

\subsection{Gluing construction}
Recall that our assumption is, there is a sequence $\{(M_i,g_i)\}$ in $\M(m,\Lambda,D,V,E)$ converging to an Einstein orbifold $(M_0,g_0)$, and there is a singular point $p\in M_0$ which admits a smooth blow-up limit $(N,g_b)$. In this subsection, we consider a converse process, that is, glue $(N,g_b)$ to the singular point $p\in M_0$. For simplicity of notations, we assume that $p$ is the only singular point of $M_0$, modeled on $\R^m/\Gamma$.

As a remark, we can also consider the general case, that $(M_0,g_0)$ has finitely many orbifold points, and there is a bubble tree formation at each orbifold point. Now we glue blow-up limits on different layers inductively, and the resulting manifold is still smooth and compact. So one can see that all discussions in this subsection also extend to the general case.

\subsubsection{Gluing different regions}
For parameters \( 0 < t<\delta^2\ll 1 \), in particular, \( \delta <\mathrm{inj}_{g_0}(p) \), we describe the gluing of $(M_0,g_0)$ and $(N,g_b)$, and such a construction is now well established in the literature. For simplicity and clarity, we set $\delta^2=16t$ throughout this paper.

We have geodesic coordinates around the singular point \( p \in M_0 \),
\[
\psi: B(0,\delta)/\Gamma \to \{ d(p,\cdot) < \delta \} \subseteq M_0,
\]
and ALE coordinates of \( N \), 
\[ \varphi: (\R^m\setminus B(0,\delta^{-1}))/\Gamma \to \{ r_b \geq \delta^{-1} \} \subseteq N. \]
Via the dilation \( S_t: x \mapsto tx \) on \( \mathbb{R}^m/\Gamma \), we have a diffeomorphism:
\[
\psi \circ S_t \circ \varphi^{-1}: \{ x\in N| \delta^{-1} \leq r_b(x) < t^{-1} \delta \}  \to \{ y\in M_0|t \delta^{-1} \leq d(p,y) < \delta \}.
\]
Then we can patch \( \{  r_b < t^{-1}\delta \} \subseteq N \) and \( \{ d(p,\cdot)\geq t \delta^{-1}  \} \subseteq M_0 \) together, by identifying the two regions above. More explicitly, the glued manifold is diffeomorphic to \( M_0 \# N \), and consists of three disjoint regions:
\[
\begin{aligned}
&N^\delta \cong \{ x\in N| r_b(x) < \delta^{-1} \} , \\
&A(t,\delta) \cong \{ x\in N|\delta^{-1} \leq r_b(x) < t^{-1} \delta\}\cong\{y\in M_0|t \delta^{-1} \leq d(p,y) < \delta \}, \\
&M_0^\delta \cong \{y\in M_0 | d(p,y) \geq \delta \}.
\end{aligned}
\]

\begin{figure}[ht]
    \centering
    \includegraphics[scale=0.8]{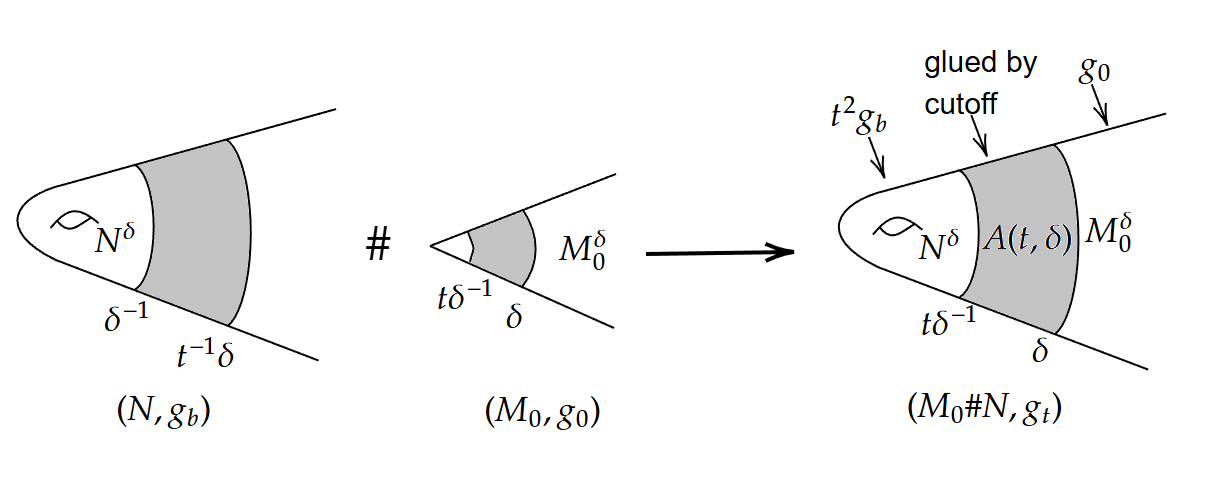}
    \caption{The gluing construction}
\end{figure}

Next, we define a radius function on the glued manifold \( M_0 \# N \):
\begin{equation}\label{r_t}
    r_{t} :=
    \begin{cases}
    t r_b, & \text{on } N^\delta \\
    t r_b = d(p,\cdot), & \text{on } A(t,\delta) \\
    d(p,\cdot), & \text{on } M_0^\delta
    \end{cases}.
\end{equation}

Choose a cutoff function \( \chi: \mathbb{R}_{\ge 0} \to \mathbb{R}_{\ge0} \), satisfying \( \chi \equiv 1 \) on \( [0, 1] \) and \( \chi \equiv 0 \) on \( [2, \infty) \). Then we define $\chi_t:M_0\#N\to\R_{\ge0}$ by $\chi_t=\chi\left(\frac{r_t}{\sqrt{t}}\right)$, so $\chi_t\equiv 1$ on $N^\delta$ and $\chi_t\equiv0$ on $M_0^\delta$. Using such a cutoff, we define the gluing metric on \( M_0 \# N \):
\begin{equation}\label{naive-gluing}
    g_{t} := t^2 \chi_{t} g_b + \left( 1 - \chi_{t} \right) g_0.
\end{equation}
Now we have the glued Riemannian manifold \( (M_0 \# N, g_{t}) \). 

\begin{notation}\label{cut-on-neck}
For clarity and for later use, we describe different regions and cutoff functions in terms of $r_t$.
\begin{itemize}
    \item As defined before, 
    \[N^\delta=\{r_t<t\delta^{-1}\}=\left\{r_t<\frac{\sqrt{t}}{4}\right\},\]
    \[A(t,\delta)=\{t\delta^{-1}\le r_t<\delta\}=\left\{\frac{\sqrt{t}}{4}\le r_t<4\sqrt{t}\right\},\]
    \[M_0^\delta=\{r_t\ge\delta\}=\{r_t\ge4\sqrt{t}\}.\]
    \item As defined before, $\chi_t=\chi\left(\frac{r_t}{\sqrt{t}}\right)$ equals to $1$ on $\{r_t\le\sqrt{t}\}\supsetneqq N^\delta$, and is supported on $\{r_t<2\sqrt{t}\}\subsetneqq N^\delta\cup A(t,\delta)$.
    \item We define 
    \[\chi_{A(t,\delta)}:=\chi\left(\frac{4r_t}{\sqrt{t}}\right)-\chi\left(\frac{r_t}{2\sqrt{t}}\right).\]
    It is supported on $A(t,\delta)$ and equals to $1$ on $\{\frac{\sqrt{t}}{2}\leq r_t\leq 2\sqrt{t}\}$.
\end{itemize}
\end{notation}

\subsubsection{Weighted H\"older norms on glued manifold}
The neck region connects singular point of \( (M_0, g_0) \) and infinity of \( (N, g_b) \), so we need a weighted Hölder norm that detects both the growth near orbifold point and the decay on ALE end.

As preparation, we recall the definition of weighted Hölder norms on orbifold \( (M_0, g_0) \). Given \( k \in \mathbb{N} \cup \{0\} \), \( \alpha \in (0,1] \), \( \beta \in \mathbb{R}_{>0} \), for a tensor field \( s \):
\[
[s]_{C^\alpha} (x) = \sup_{\substack{y \in T_xM_0 \\ |y|\leq\mathrm{inj}_{g_0}x}} \frac{ \left| \exp_x^* s(x) - \exp_x^* s(y) \right|_{\exp_x^* g_0} }{ |y|^\alpha_{\exp_x^* g_0} },
\]
\[\| s \|_{C^{k}_{\beta}(g_0)} := \sup_{x \in M_0} d(x,p)^{-\beta}  \sum_{i=0}^k d(x,p)^i |\nabla^i s(x)| ,\]
\[
\| s \|_{C^{k,\alpha}_{\beta}(g_0)} := \sup_{x \in M_0} d(x,p)^{-\beta} \left( \sum_{i=0}^k d(x,p)^i |\nabla^i s(x)| + d(x,p)^{k+\alpha} [\nabla^ks]_{C^\alpha} (x) \right).
\]

For a \( (p,q) \)-tensor field \( s \) on \( (M_0 \# N, g_{t}) \), we decompose
\[
s = \chi_{t} s + (1 - \chi_{t}) s.
\]
Since \( \text{supp}(\chi_{t}) \subset N^\delta \cup A(t,\delta) \), \( \text{supp}(1 - \chi_{t}) \subset A(t,\delta) \cup M_0^\delta \), we can view \( \chi_{t} s \) as a tensor field on \( (N, t^2 g_b) \), and \( (1 - \chi_{t}) s \) as a tensor field on \( (M_0, g_0) \). Thus for \( k \in \mathbb{N} \cup \{0\} \), \( \alpha \in (0,1] \), \( \beta_1, \beta_2 \in \mathbb{R}_{>0} \), we define the weighted Hölder norm of \( (p,q) \)-tensor field \( s \) as
\[
\begin{aligned}
\| s \|_{C^{k,\alpha}_{\beta_1,\beta_2}(g_{t})} &:= \| \chi_{t} s \|_{C^{k,\alpha}_{\beta_1}(t^2 g_b)} + \| (1 - \chi_{t}) s \|_{C^{k,\alpha}_{\beta_2}(g_0)} \\
&= t^{p - q} \| \chi_{t} s \|_{C^{k,\alpha}_{\beta_1}(g_b)} + \| (1 - \chi_{t}) s \|_{C^{k,\alpha}_{\beta_2}(g_0)},
\end{aligned}
\]
\[\|s \|_{C^{k}_{\beta_1,\beta_2}(g_{t})} := \| \chi_{t} s \|_{C^{k}_{\beta_1}(t^2 g_b)} + \| (1 - \chi_{t}) s \|_{C^{k}_{\beta_2}(g_0)}.\]

\begin{remark}\label{rmk3.1}
    (i) Since \( M_0 \) is compact, \( \| s \|_{C^0_{\beta}(g_0)} \leq 1 \) is equivalent to: 
    \[|s| \leq r^{\beta}\text{ near }p,\text{ and }|s|\text{ is bounded on }M_0.\]
    (ii) For a tensor field \( s \) on \( (M_0 \# N, g_{t}) \), \( \| s \|_{C^{0}_{\beta_1,\beta_2}(g_{t})} \leq 1 \) is equivalent to \( |s| \leq f\circ r_{t} \), where
    \[
    f(r) =
    \begin{cases}
    r^{\beta_2} + \left(\frac{t}{r}\right)^{\beta_1}, & \text{for } t \delta^{-1} \leq r \leq \delta \\
    (t \delta^{-1})^{\beta_2} + \delta^{\beta_1}, & \text{for } r < t \delta^{-1} \\
    \delta^{\beta_2} + (t \delta^{-1})^{\beta_1}, & \text{for } r > \delta
    \end{cases}.
    \]
\end{remark}

\begin{remark}
    (1) We sometimes use the notation $s\in\rtd^{-l}C^{k,\alpha}_{\beta_1,\beta_2}(g_{t})$ for $l\in\R$, which means that $\rtd^ls\in C^{k,\alpha}_{\beta_1,\beta_2}(g_{t})$, and 
    \[\|s\|_{\rtd^{-l}C^{k,\alpha}_{\beta_1,\beta_2}(g_{t})}:=\|\rtd^ls\|_{C^{k,\alpha}_{\beta_1,\beta_2}(g_{t})}.\]

    (2) From now on, we consider weighted H\"older norms $C^{k,\alpha}_{\beta,\beta}$ on $g_t$, that is, $\beta_1=\beta_2$ (denoted by $\beta$), with fixed $0<\beta<1$, unless otherwise specified.
\end{remark}

\subsubsection{Approximating a resolution sequence}
Now let's go back to our question, about resolution of Einstein orbifolds. Recall that we suppose \( (M_0, g_0) \) is a compact Einstein orbifold with only one singular point \( p \), and it admits a resolution sequence \( \{ (M_i, g_i) \}\subseteq \mathcal{M}(m, \Lambda, D, V, E) \) with a smooth blow-up limit \( (N, g_b) \).

First of all, after choosing a subsequence, we can assume each $M_i$ is diffeomorphic to $M_0\#N$. On the neck region, Bando \cite{Bando90,BandoCorrection} proved that Riemannian curvature of $g_i$ is decaying as $r^{\beta}+\left(\frac{t}{r}\right)^\beta$. Then, the following fundamental theorem of Ozuch \cite{Ozu1} tells that, the (pointed-)Gromov-Hausdorff approximation could be improved to weighted $C^{2,\alpha}$ approximation. 

\begin{theorem}[\cite{Ozu1}, Theorem 6.4; \cite{Ozu2}, Corollary 2.10]\label{neckana}
    There exists a constant $\beta>0$ such that the followings hold.

    For any \(  \varepsilon > 0 \), \( \alpha \in (0,1] \), there exists \( i_0 > 0 \), such that for all \( i \geq i_0 \), there are parameters \( 0 < t_i=\frac{\delta_i^2}{16} \ll 1 \) and diffeomorphisms \( \phi_i: M_0 \# N \to M_i \), with
    \[
    \| \phi_i^* g_i - g_{t_i} \|_{C^{2,\alpha}_{\beta,\beta}(g_{t_i})} <\varepsilon.
    \]
    Moreover, $t_i,\delta_i$ tend to $0$ as $i\to\infty$.
\end{theorem}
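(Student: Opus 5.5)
The plan is to build the diffeomorphisms $\phi_i$ region by region, following the decomposition $M_0\#N=N^{\delta}\cup A(t,\delta)\cup M_0^{\delta}$, and then patch them together. The two ends follow directly from Theorem \ref{BKN}. The real content is a decay estimate on the neck, obtained from a three-annulus argument for the Einstein equation near the flat cone $\R^m/\Gamma$.

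\textbf{Step 1: the two ends.} By Theorem \ref{BKN}(2), $g_i\to g_0$ in $C^\infty$ on compact subsets of $M_0\setminus\{p\}$. This gives diffeomorphisms on $\{d(p,\cdot)\ge \delta_0\}$ with arbitrarily small $C^{2,\alpha}$ error, for any fixed $\delta_0$. By Theorem \ref{BKN}(3), since $(N,g_b)$ is smooth, $t_i^{-2}g_i\to g_b$ smoothly on $\{r_b\le R_0\}$ for any fixed $R_0$. Here $t_i$ is the blow-up scale, which I may readjust by a factor tending to $1$. Passing to a subsequence and a diagonal argument, I let $\delta_0\to0$ and $R_0\to\infty$ slowly. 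It then remains to control the annulus $\{R_0t_i\le r\le\delta_0\}$, which contains $A(t_i,\delta_i)$ once $\delta_i=4\sqrt{t_i}$.

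\textbf{Step 2: neck estimate.} On the annulus, Bando's estimate gives scale-invariant curvature smallness: $r^2|\Rm_{g_i}|\le\eta$, with $\eta\to0$. By $\varepsilon$-regularity for Einstein metrics, each dyadic annulus $\{\rho\le r\le2\rho\}$ is then $C^{k}$-close, after rescaling by $\rho^{-2}$, to a flat annulus in $\R^m/\Gamma$. I would fix a gauge by taking Bianchi (or harmonic) coordinates relative to $g_E$ on the whole neck. In such a gauge $h=g_i-g_E$ satisfies the gauged Einstein equation $P_{g_E}h=Q(h)+\mu g_i$ with quadratic error $Q$.

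The key input is the indicial roots of the gauged linearized Einstein operator on the cone $\R^m/\Gamma$. Homogeneous solutions of degree $0$ are constant tensors. After the $\Gamma$-invariance and an $\mathrm{O}(m)$ normalization these can be absorbed into the choice of coordinates and scale. Degree-$1$ solutions are absent by $\Gamma$-invariance, or are pure gauge. Apart from these, the growing rates are $\ge2$ and the decaying rates are $\le -m$ (or at least $\le-(m-2)$). Hence there is a gap $(-\beta_0,\beta_0)$ containing no nontrivial rates once the constants are normalized away. A three-annulus lemma then shows that the normalized $C^{2,\alpha}$ size of $h$ on dyadic annuli is bounded by $C\big((r/\delta_0)^{\beta}+(R_0t_i/r)^{\beta}\big)\eta$ for any $\beta<\beta_0$. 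The $\mu g_i$ term and $Q(h)$ are perturbative on scales $r\le\delta_0$.

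\textbf{Step 3: matching and patching.} I will compare the neck chart with the geodesic chart $\psi$ of $g_0$ near $p$, where $g_0-g_E=O(r^2)$. I will also compare it with the ALE chart $\varphi$ of $g_b$, where $t^2g_b-g_E=O((t/r)^{m})$. The transition maps are close to elements of $\mathrm{O}(m)$ together with a dilation. I absorb these by adjusting $t_i$ and by rotating $\varphi$. Gluing the three diffeomorphisms with cutoffs on overlapping dyadic annuli, the error of $\phi_i^*g_i-g_{t_i}$ in $C^{2,\alpha}_{\beta,\beta}(g_{t_i})$ is bounded by the three contributions above. Each of these is $<\varepsilon$ for $i$ large, by the weight computation in Remark \ref{rmk3.1}.

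\textbf{Main obstacle.} I expect Step 2 to be the hardest: obtaining genuinely polynomial decay $r^\beta+(t/r)^\beta$ rather than mere smallness. This requires excluding slow drift, such as the metric rotating or rescaling slowly along the neck, or logarithmic terms. My first attempt will be the three-annulus lemma for $P_{g_E}$ in the Bianchi gauge, with a careful treatment of the degree-$0$ kernel. That kernel must be removed at each scale by a renormalization in $\mathrm{O}(m)$ and a dilation, and the accumulated renormalizations must be shown to converge, because their increments decay geometrically.
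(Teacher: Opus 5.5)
Your outline is viable, but at the decisive step it takes a different route from the paper. The paper does not prove this statement itself. It cites Ozuch, and the analytic input it names is Bando's neck estimate: polynomial decay $r^2|\Rm_{g_i}|\lesssim r^{\beta}+(t_i/r)^{\beta}$ on the annulus, not mere smallness. With that decay available, the slow drift you worry about is controlled automatically. Almost-Euclidean charts on consecutive dyadic annuli differ by almost-isometries whose defect is bounded by the curvature there. These defects sum geometrically in $\log r$ to something of size $r^\beta+(t_i/r)^\beta$, once the chart is normalized in the middle of the neck.

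You discard the decay and keep only $\eta$-smallness, so your three-annulus lemma is in effect a reproof of Bando's estimate. That is more self-contained, and your indicial bookkeeping is adequate. The componentwise Laplacian has rates $0,1,2,\dots$ and $2-m,1-m,\dots$, so any $\beta\in(0,1)$ avoids all nonzero rates. Degree-one modes are harmless whether or not they are pure gauge, and your fallback bound $\le 2-m$ on the decaying side is the correct one. In Step 3 you are also right to allow a rotation of $\varphi$ by an element normalizing $\Gamma$. A relative rotation not close to the identity cannot be absorbed into $\phi_i$ within the allowed error, so the statement implicitly relies on this choice of ALE chart.

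As written, however, Step 2 has a hole exactly where you place the main obstacle.

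\textbf{The global gauge is circular.} Taking Bianchi or harmonic coordinates relative to $g_E$ on the whole neck presupposes a single almost-Euclidean chart across an annulus of logarithmic length $\sim\log(1/t_i)$. Curvature smallness only supplies such charts one dyadic annulus at a time, and per-scale mismatches of size comparable to $\eta$ could accumulate. The repair is to run the three-annulus lemma on a gauge-invariant quantity, whose neutral modes are trivial, and only afterwards assemble the chart. Suitable quantities are the scale-invariant distance from $g_i|_{\{\rho\le r\le 2\rho\}}$ to flat cone metrics modulo diffeomorphisms, or $\rho^2|\Rm|$ together with its derivatives.

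\textbf{The renormalizations are in the wrong group.} The neutral kernel is the space of $\Gamma$-invariant constant symmetric $2$-tensors. A renormalization by $\mathrm{O}(m)$ and a dilation only removes multiples of $g_E$, since rotations leave $g_E$ unchanged and merely conjugate the constant tensor. For $\Gamma=\Z_2$, for instance, every constant symmetric tensor $c$ is invariant. The per-scale renormalizations must therefore be the linear maps $(I+c)^{-1/2}\in\mathrm{GL}(m)$, which commute with $\Gamma$. Only the accumulated end-to-end transition is close to $\R_{>0}\times\mathrm{O}(m)$, because both end charts are almost isometric.

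With these two corrections, or more simply by using Bando's decay as the paper does, your Steps 1 and 3 go through.
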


\section{Gauge fixing}
The main aim of this section is to show that, for $t$ sufficiently small, up to diffeomorphism, each metric near $g_t$ lies in Bianchi gauge of $g_t$. 

First of all, for any Riemannian metric $g$, we define two operators acting on symmetric $(0,2)-$tensors as follows:
\[\delta_g:\Gamma(Sym^2(T^*M))\to\Omega^1(M),\quad (\delta_gh)_i:=-g^{jk}\nabla_kh_{ij},\]
\[B_g:\Gamma(Sym^2(T^*M))\to\Omega^1(M),\quad B_g h := \delta_g h + \frac{1}{2}d(\text{tr}_g h).\]
\begin{definition}\label{BG}
    We say a metric $g$ on $M$ is in \textbf{Bianchi gauge} of $g_t$ if $B_{g_t}(g)=0$.
\end{definition}

Also we introduce
\[\delta^*_g:\Omega^1(M)\to \Gamma(Sym^2(T^*M)),\quad (\delta^*_g\omega)_{ij}:=\frac{1}{2}(\nabla_i\omega_j+\nabla_j\omega_i),\]
which is the formal adjoint of $\delta_g$. Since $\Omega^1(M)$ is the dual space of $\Gamma(TM)$, we can also regard $\delta_g^*$ as an operator acting on vector fields, which differs from the Lie derivative of $g$ only by a factor $\frac{1}{2}$.

\subsection{Weighted decoupling norms}
We seek for a linear estimate, which serves for local slice theorem for Bianchi gauge. 

For each $X\in r_tC^{3,\alpha}_{\beta,\beta}(g_t)$, we denote $\exp_X:=\exp_{g_t}(X)$ to be the diffeomorphism defined by the exponential map of $g_t$. To apply the inverse function theorem, we need to estimate the linearized operator of $X\mapsto B_{g_t}\exp_X^*g$. In other words, we wish that for all sufficiently small $t$,
\[B_{g_t}\delta_{g_t}^*: r_tC^{3,\alpha}_{\beta,\beta}(g_t)\to r_t^{-1}C^{1,\alpha}_{\beta,\beta}(g_t)\]
is invertible. 

However, this is obstructed by non-trivial kernel on the neck region, which is modeled on $(\R^m-\{0\})/\Gamma$. More precisely, such obstruction consists of vector fields $X$ on $(\R^m-\{0\})/\Gamma$ such that
\[B_{g_E}\delta_{g_E}^*(X)=\frac{1}{2}\nabla^*_{g_E}\nabla_{g_E}(X)=0,\quad\text{and}\quad|X(x)|\leq C|x|\left(|x|^\beta+|x|^{-\beta}\right).\]
The good news is that, such vector fields only consist of linear vector fields on $(\R^m-\{0\})/\Gamma$. This leads to the weighted decoupling norms defined by Ozuch, which measures the non-trivial kernel above separately in unweighted norms. 
\begin{definition}[Weighted decoupling norms on vector fields]\label{vector-norms}
    Let $X$ be a vector field on $M=M_0\#N$, and recall we have defined a cutoff function $\chi_{A(t,\delta)}$ supported on the neck region $A(t,\delta)$ (see Notation \ref{cut-on-neck}). The $r_tC^{k,\alpha}_{\beta,*}-$norm of $X$ is defined as
    \[\|X\|_{r_tC^{k,\alpha}_{\beta,*}}=\inf\left\{\|X_*\|_{r_tC^{k,\alpha}_{\beta,\beta}}+\|X_A\|_{rC^0_0(g_E)}\left|\begin{aligned}
        X=X_*+\chi_{A(t,\delta)}X_A, \text{ where }X_A\\
        \text{ is a linear vector field on }\R^m/\Gamma.
    \end{aligned}\right.\right\}\]
\end{definition}

For similar reasons, Ozuch introduced the following decoupling norms on symmetric 2-tensors, which decouples Euclidean harmonic 2-tensors bounded by $|x|^{\beta}+|x|^{-\beta}$. 

\begin{definition}[Weighted decoupling norms on symmetric 2-tensors]\label{tensor-norms}
    Let $h$ be a symmetric 2-tensor field on $M=M_0\# N$. The $C^{k,\alpha}_{\beta,*}$-norm of $h$ is defined as
    \[\|h\|_{C^{k,\alpha}_{\beta,*}}=\inf\left\{\|h_*\|_{C^{k,\alpha}_{\beta,\beta}}+|h_A|_{g_E}\left|\begin{aligned}
        h=h_*+\chi_{A(t,\delta)}h_A, \text{ where }h_A\text{ is a }\\
        \text{ constant symmetric 2-tensor on }\R^m/\Gamma.
    \end{aligned}\right.\right\}\]
\end{definition}

\subsection{Local slice theorem}
The following proposition is essentially equivalent to {\cite[Lemma 3.7]{Ozu2}}. The proof can be obtained by carrying over Ozuch’s argument verbatim to the Bianchi gauge setting. For this reason, we only briefly explain why the original framework remains valid under our setup, and omit the detailed proof, which can be found in Ozuch’s original paper.

\begin{proposition}\label{prop-gauge-linear}
    There exists constants $0<\delta_0\ll1$ such that, for all $0<\delta<\delta_0$ and $t=\frac{\delta^2}{16}$, the linear operator (acting on vector fields)
    \[B_{g_t}\delta_{g_t}^*: r_tC^{3,\alpha}_{\beta,*}(g_t)\to r_t^{-1}C^{1,\alpha}_{\beta,*}(g_t)\]
    is invertible. Moreover, there exists $C>0$ such that
    \begin{equation}\label{equ-gauge-linear}
        \|X\|_{r_tC^{3,\alpha}_{\beta,*}(g_t)}\leq C\|B_{g_t}\delta_{g_t}^*X\|_{r_t^{-1}C^{1,\alpha}_{\beta}(g_t)},\quad \forall X\in r_tC^{3,\alpha}_{\beta,*}(g_t).
    \end{equation}
\end{proposition}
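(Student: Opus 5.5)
The plan is to compute $B_{g}\delta_{g}^*$ explicitly and then argue by contradiction with a blow-up analysis, following Ozuch. The Bochner formula gives
\[
B_g\delta_g^*X=\tfrac12\left(\nabla^*\nabla X-\Ric_g(X)\right).
\]
On $M_0$ this equals $\frac12(\nabla^*\nabla-\mu)X$. Since $\mu<0$, the operator is strictly positive, so it is injective on the orbifold in weighted spaces; elliptic theory on the orbifold then gives invertibility. On $(N,g_b)$ it equals $\frac12\nabla^*\nabla$. Integrating by parts, a decaying kernel element ($X=O(r^{1-\beta})$ with $1-\beta<1$ sufficiently controlled) must be parallel, and a parallel decaying field vanishes. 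This gives injectivity on $N$ between $r_bC^{3,\alpha}_{\beta}$ and $r_b^{-1}C^{1,\alpha}_{\beta}$.

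The real difficulty is the neck. There the model is $(\R^m\setminus\{0\})/\Gamma$, and the kernel of $\nabla^*\nabla$ with growth $|x|^{1\pm\beta}$ consists exactly of the linear vector fields. This is why the decoupling norm $r_tC^{k,\alpha}_{\beta,*}$ is used.

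I first prove \eqref{equ-gauge-linear} by contradiction. Suppose there are $t_i\to0$ and $X_i$ with $\|X_i\|_{r_tC^{3,\alpha}_{\beta,*}}=1$ and $\|B\delta^*X_i\|\to0$. Decompose $X_i=X_{i,*}+\chi_{A}X_{i,A}$ with near-optimal splitting. Using Schauder estimates in the scale-invariant weighted norms, a sup point of the weighted $C^0$ part of $X_{i,*}$ can be assumed to carry a definite amount of norm. I then split into three cases according to where that point sits.

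\begin{itemize}
\item[(a)] The point stays in the $M_0$ region at distance $\gtrsim1$. Passing to a limit gives a kernel element of $\nabla^*\nabla-\mu$ on $M_0\setminus\{p\}$ that is bounded by $r^{1+\beta}+|X_A|r$. A linear part $X_A$ near the orbifold point cannot occur for a $\Gamma$-invariant solution unless it is absorbed. Positivity of the operator kills the limit, which is a contradiction.
\item[(b)] The point lies at $r_t\sim t$. Rescaling gives a kernel element on $N$ with growth at most $r_b^{1-\beta}$ plus a linear field at infinity. Integration by parts on $\Omega_r$ shows that $\nabla X$ is parallel. A linear field on a non-flat Ricci-flat ALE space is not Killing-parallel, so the limit is zero.
\item[(c)] The point lies in the neck with $t\ll r_t\ll1$. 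The rescaled limit is a harmonic vector field on $(\R^m\setminus0)/\Gamma$ bounded by $|x|^{1+\beta}+|x|^{1-\beta}$. Expanding in spherical harmonics, only linear fields survive. These are exactly what the infimum in Definition~\ref{vector-norms} subtracts off, which contradicts near-optimality of the splitting.
\end{itemize}

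The $X_A$ component itself is controlled because any linear field is nearly in the kernel on the neck only. Its contribution to $B\delta^*(\chi_AX_A)$ is supported where $\nabla\chi_A\neq0$ and has size $|X_A|\,r_t^{-1}\cdot r_t^{-\beta}$-comparable terms. Matching with the ends, via the obstruction to extending a linear field harmonically to $M_0$ and to $N$ as in (a) and (b), bounds $|X_A|$.

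With the estimate in hand, surjectivity follows. The operator is self-adjoint and elliptic, so it is Fredholm of index zero on these weighted spaces for generic $\beta\in(0,1)$, and injectivity implies surjectivity. The main obstacle is case (c) together with the control of $X_A$. It requires careful bookkeeping that the linear neck fields are the only obstruction, and that the decoupled norm genuinely absorbs them uniformly in $t$. I would first check this by separation of variables on the cone, where $\nabla^*\nabla$ acting on $r^\lambda\phi$ gives indicial roots; the roots in the range $(1-\beta,1+\beta)$ are exactly $\lambda=1$.
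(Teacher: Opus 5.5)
Your overall architecture is the Ozuch argument the paper defers to: the Bochner formula, a contradiction/blow-up argument in three regimes, and decoupling of the linear neck fields. However, your treatment of the linear neck component on the $N$ side is wrong, and that is the crux of the proof.

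In case (b) and in your paragraph on $X_A$ you assert two things. First, that a solution of $\nabla^*\nabla X=0$ on $(N,g_b)$ which is $O(r_b^{1-\beta})$ plus a linear field at infinity must vanish. Second, that linear fields are therefore obstructed from extending harmonically to $N$. Both are false. Let $u$ be the solution of Proposition~\ref{Poisson-solution}. Since $\Delta u=2m$ and $\Ric_{g_b}=0$, we get $B_{g_b}\delta^*_{g_b}(\nabla u)=\tfrac12\nabla^*\nabla\nabla u=0$. Yet $\nabla u=2x^i\partial_i+O(r^{1-m+\varepsilon})$ is asymptotic to the radial linear field. An index count shows that every $\Gamma$-equivariant linear field at infinity extends to such a kernel element.

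Your supporting integration by parts also fails. For $X=O(r_b^{1-\beta})$, which grows rather than decays, the boundary term $\int_{\partial\Omega_R}\langle X,\nabla_\nu X\rangle$ is of order $R^{m-2\beta}$. So you cannot conclude that $X$ is parallel, and with a linear part it is worse. Consequently $|X_A|$ cannot be bounded by ``matching with both ends.''

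The correct mechanism is asymmetric, and this is exactly what the paper's reduction records.

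\textbf{The $M_0$ side.} The linear fields are absorbed here, and this is where $\mu<0$ enters. Suppose $(\nabla^*\nabla-\mu)X=0$ on $M_0\setminus\{p\}$ with $X=X_A+O(r^{1+\beta})$ near $p$. The boundary term at $p$ is $O(r^m)$, so $\int_{M_0}(|\nabla X|^2-\mu|X|^2)=0$, which forces $X\equiv 0$, including $X_A$. Your remark in (a), that a linear part cannot occur for a $\Gamma$-invariant solution, is not the reason: the radial field is always $\Gamma$-invariant. Positivity is the reason. Accordingly, the $M_0$ model is invertible only on $r\,C^{3,\alpha}_\beta(g_0)$ enlarged by the cut-off linear fields. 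On $r\,C^{3,\alpha}_\beta(g_0)$ alone it is injective but has a cokernel whose dimension equals the number of $\Gamma$-equivariant linear fields, because the weight $1+\beta$ lies above the indicial root $1$.

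\textbf{The $N$ side.} You must arrange that the blow-up limit lies in $r_bC^{3,\alpha}_\beta(g_b)$ with no linear part. This holds either because the linear part has already been shown to be negligible from the $M_0$ side, or because the support of $\chi_{A(t,\delta)}$ escapes to infinity in $N$. There the kernel is trivial: there are no indicial roots in $(0,1)$, and there are no nonzero $\Gamma$-invariant constant vectors since $\Gamma$ acts freely on $S^{m-1}$. Together these force $X=O(r_b^{1-m})$, and only then does integration by parts give $X=0$.

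With case (b) and the control of $X_A$ reorganized this way, your plan matches the reduction the paper sketches: invertibility on $M_0$ via $\mu<0$, and trivial kernel on $N$ in $r_bC^{3,\alpha}_\beta$ without linear fields.
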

To prove this, we reduce it to the invertibility on $M_0$ and $N$. On the compact orbifold $M_0$, by Ricci identity,
\[B_{g_0}\delta^{*}_{g_0}=\frac{1}{2}\nabla^*_{g_0}\nabla_{g_0}-\frac{1}{2}\mu,\]
where $\mu$ is the Einstein constant of $g_0$. By our assumption, $\mu<0$, hence $B_{g_0}\delta^{*}_{g_0}$ is invertible. On the ALE manifold $N$, we have
\[B_{g_b}\delta^{*}_{g_b}=\frac{1}{2}\nabla^*_{g_b}\nabla_{g_b},\]
which has trivial kernel on $r_bC^{3,\alpha}_\beta(g_b)$, hence it is also invertible. Consequently, Ozuch’s proof carries over directly to our situation.

As a tool to solve nonlinear PDEs,  we give a quantitative version of implicit function theorem.
\begin{lemma}\label{IFT}
    Let $F: X \to Y$ be a smooth map between Banach spaces, and let $Q := F - F(0) - dF_0$. Suppose there are constants $q, r, c > 0$ such that:
    \begin{enumerate}
        \item[(1)] $\|Q(x) - Q(y)\| \leq q\|x - y\|(\|x\| + \|y\|)$, $\forall x, y \in B(0, r) \subset X$;
        \item[(2)] $dF_0$ is an isomorphism, and $\|(dF_0)^{-1}\| \leq c$;
        \item[(3)] $r < \min\left\{1, \frac{1}{2qc}\right\}$ and $\|F(0)\| \leq \frac{r}{2c}$.
    \end{enumerate}
    Then the equation $F(x) = 0$ has a unique solution in $B(0, r)$.
\end{lemma}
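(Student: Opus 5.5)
We prove Lemma \ref{IFT} by the Banach fixed point theorem, recasting $F(x)=0$ as a fixed point problem on the closed ball $\overline{B(0,r)}$. Since $F(x)=F(0)+dF_0(x)+Q(x)$ and $dF_0$ is invertible by (2), the equation $F(x)=0$ is equivalent to
\[x=T(x):=-(dF_0)^{-1}\bigl(F(0)+Q(x)\bigr).\]
The plan is to show that $T$ maps $\overline{B(0,r)}$ into itself and is a strict contraction there. Completeness of $X$ then gives a unique fixed point in the closed ball. Uniqueness in the open ball $B(0,r)$ follows, and existence in the open ball needs a small extra remark, addressed below.

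First we note $Q(0)=F(0)-F(0)-dF_0(0)=0$. Applying (1) with $y=0$ gives $\|Q(x)\|\le q\|x\|^2$ for $\|x\|\le r$; we assume (1) extends to the closed ball, either by continuity of $Q$ or by applying it on a slightly larger ball. For $\|x\|\le r$ we then estimate
\[\|T(x)\|\le c\|F(0)\|+cq\|x\|^2\le \tfrac r2+cqr^2<\tfrac r2+\tfrac r2=r,\]
using (3) in the form $\|F(0)\|\le r/(2c)$ and $cqr<1/2$. So $T$ maps $\overline{B(0,r)}$ into the open ball $B(0,r)$. In particular, any fixed point in the closed ball automatically lies in the open ball, which settles existence in $B(0,r)$.

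For the contraction property, take $x,y\in\overline{B(0,r)}$. By (1),
\[\|T(x)-T(y)\|\le c\|Q(x)-Q(y)\|\le cq(\|x\|+\|y\|)\|x-y\|\le 2cqr\|x-y\|,\]
and $2cqr<1$ by (3). Thus $T$ is a contraction on the complete metric space $\overline{B(0,r)}$, and the contraction mapping principle yields a unique fixed point $x^*$. It lies in $B(0,r)$ and satisfies $F(x^*)=0$. Any other zero of $F$ in $B(0,r)$ is a fixed point of $T$ in the closed ball, hence equals $x^*$, which gives uniqueness. The condition $r<1$ is not needed in this argument. The only delicate point is the boundary issue of applying (1) on the closed ball versus the open ball. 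This is minor, and the strict inequality $cqr<1/2$ leaves room to shrink $r$ slightly if necessary.
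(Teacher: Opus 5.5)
Your proof is correct and follows the same route as the paper. You use the same map $x\mapsto -(dF_0)^{-1}(F(0)+Q(x))$, the same self-map bound $r/2+cqr^2<r$ and the same contraction constant $2cqr<1$, then apply Banach's fixed-point theorem. You also work on the closed ball, where completeness actually holds and the image lands back in the open ball; this is a welcome bit of care that the paper leaves implicit, since it applies the fixed-point theorem directly on the open ball $B(0,r)$.
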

    
\begin{proof}
    We need to solve $F(x) = 0$, that is,
    \[
    F(0) + dF_0(x) + Q(x) = 0.
    \]
    
    First, let's solve the linear equation $dF_0(x) = y$. The assumption (2) tells there is a solution $(dF_0)^{-1}(y)$ for any $y \in Y$.
    
    Then we look for a solution $x$ for $dF_0(x) = y(x)$, where $y(x) = -F(0) - Q(x)$. This is equivalent to the equation 
    \begin{equation}\label{4.1}
        x = (dF_0)^{-1}(y(x)) = - (dF_0)^{-1}(F(0) + Q(x))
    \end{equation}
    
    Define $G: X \to X$, $G(x) = - (dF_0)^{-1}(F(0) + Q(x))$. One could verify that $G(B(0, r)) \subset B(0, r)$, and 
    \[
    \|G(x) - G(y)\| \leq 2rcq\|x - y\| < \|x - y\|,
    \]
    hence by Banach's fixed-point theorem, there exists a unique solution $x\in B(0,r)$ of (\ref{4.1}).
\end{proof}

By applying Lemma \ref{IFT} to $F:X\mapsto B_{g_t}\exp_X^*g$, with the help of Proposition \ref{prop-gauge-linear}, we immediately get the following slice theorem.
\begin{proposition}\label{Slice-theorem}
    There exists $\varepsilon>0$, $C>0$ and $0<\delta_0\ll1$ such that the followings hold.

    for any $0<\delta<\delta_0$, $t=\frac{\delta^2}{16}$, and any metric $g$ on $M$ with $\|g-g_t\|_{\ctab(g_t)}<\varepsilon$, there exists a diffeomorphism $\varphi:M\to M$ such that
    \[B_{g_t}(\varphi^*g)=0\quad\text{and}\quad\|\varphi^*g-g\|_{\ctab(g_t)}\leq C\|g-g_t\|_{\ctab(g_t)}.\]
    As a corollary, we have $\|\varphi^*g-g_t\|_{\ctab(g_t)}\leq (C+1)\|g-g_t\|_{\ctab(g_t)}$.
\end{proposition}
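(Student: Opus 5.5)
The plan is to apply Lemma \ref{IFT} to the map
\[
F: r_tC^{3,\alpha}_{\beta,*}(g_t)\to r_t^{-1}C^{1,\alpha}_{\beta,*}(g_t),\qquad F(X):=B_{g_t}(\exp_X^*g),
\]
on a small ball around $X=0$. The diffeomorphism will then be $\varphi=\exp_X$ for the unique zero $X$ of $F$.

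First I compute the linearization. Since $\frac{d}{ds}\big|_{s=0}\exp_{sX}^*g=\mathcal{L}_Xg$, and $\mathcal{L}_Xg_t=2\delta_{g_t}^*X$, we get
\[
dF_0(X)=B_{g_t}(\mathcal{L}_Xg)=2B_{g_t}\delta_{g_t}^*X+B_{g_t}\big(\mathcal{L}_X(g-g_t)\big).
\]
By Proposition \ref{prop-gauge-linear}, the operator $2B_{g_t}\delta_{g_t}^*$ is invertible with inverse bounded by $C$, uniformly in $t$. The perturbation term is controlled by a multiplication estimate in the weighted spaces. Schematically, $\mathcal{L}_X(g-g_t)$ is of the form $X*\nabla(g-g_t)+\nabla X*(g-g_t)$, so after applying $B_{g_t}$ it lies in $r_t^{-1}C^{1,\alpha}_{\beta,\beta}$. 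Its norm is at most $C\|g-g_t\|_{C^{2,\alpha}_{\beta,\beta}}\|X\|_{r_tC^{3,\alpha}_{\beta,*}}$. The key point for the decoupled component is that a linear field $X_A$ satisfies $|X_A|\lesssim r_t$ and $|\nabla X_A|\lesssim 1$, which is exactly the scaling the weights require. Hence for $\varepsilon$ small, $dF_0$ remains an isomorphism with inverse bounded by $2C$.

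Next I treat the constant term and the quadratic remainder. We have $F(0)=B_{g_t}(g)=B_{g_t}(g-g_t)$, because $B_{g_t}(g_t)=0$. Its norm in $r_t^{-1}C^{1,\alpha}_{\beta,\beta}$ is bounded by $C\|g-g_t\|_{\ctab}$, since $B_{g_t}$ is a first-order operator whose scaling matches the $r_t^{-1}$ weight. For the remainder $Q$, I write $\exp_X^*g$ as a smooth function of $(x,X,\nabla X)$ together with $g$ and its derivatives. A Taylor expansion then gives the Lipschitz-quadratic bound (1) of Lemma \ref{IFT}, with a constant $q$ independent of $t$. This step requires checking that all estimates are scale invariant: on each region the rescaled geometry ($t^{-2}g_t$ near $N$, $g_0$ away from $p$, Euclidean cones on the neck) has bounded geometry after rescaling by $r_t$. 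This is the step I expect to be the main technical obstacle. The difficulty is making the nonlinear estimate for $\exp_X$ uniform in $t$, including the linear pieces $\chi_{A(t,\delta)}X_A$, which are not small pointwise relative to $r_t^{1+\beta}$. I would handle it by working in rescaled balls of radius comparable to $r_t$ and using only $|X|\le Cr_t\|X\|$ and $|\nabla X|\le C\|X\|$.

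Finally I choose the radius $r=2c\,C'\|g-g_t\|_{\ctab}$, where $c$ bounds the inverse of $dF_0$ and $C'$ bounds $F(0)$. For $\varepsilon$ small this radius satisfies $r<\min\{1,\frac{1}{2qc}\}$, so Lemma \ref{IFT} produces a zero $X$ with $\|X\|_{r_tC^{3,\alpha}_{\beta,*}}\le C\|g-g_t\|_{\ctab}$. Set $\varphi=\exp_X$, which is a diffeomorphism because $|\nabla X|$ is small. Then $\varphi^*g-g=\int_0^1\exp_{sX}^*\mathcal{L}_Xg\,ds$. Its $\ctab$ norm is bounded by $C\|X\|$: the term $\mathcal{L}_Xg_t=2\delta^*_{g_t}X$ loses one derivative and one power of $r_t$, and the linear part contributes $\chi_{A}\mathcal{L}_{X_A}g_E$ plus cutoff errors of size $\|X_A\|$. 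Together these give the stated bound. The corollary then follows from the triangle inequality.
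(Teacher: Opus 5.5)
Your route is the paper's route. Its entire proof is the sentence ``apply Lemma~\ref{IFT} to $X\mapsto B_{g_t}\exp_X^*g$ using Proposition~\ref{prop-gauge-linear}'', and you are filling in the estimates behind it. Two of those estimates fail, and the paper's one-line proof hides the same problems.

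The first problem is a loss of one derivative. You claim $B_{g_t}(\mathcal L_X(g-g_t))\in r_t^{-1}C^{1,\alpha}_{\beta,\beta}$ with norm $\le C\|g-g_t\|_{\ctab}\|X\|$. But $B(\mathcal L_Xk)=\nabla_X(Bk)+\nabla X*\nabla k+\nabla^2X*k+\Rm*X*k$, so this term contains $X*\nabla^2(g-g_t)$, which is only $C^{0,\alpha}$. Likewise, the Taylor remainder of $X\mapsto B_{g_t}(\exp_X^*g)$ needs third and fourth derivatives of $g$, through the composition $g\circ\exp_X$. Hence $F$ is not $C^1$ into $r_t^{-1}C^{1,\alpha}$, and $q$ is not controlled by $\|g-g_t\|_{\ctab}$. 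The standard cure is naturality: $\exp_X^*\bigl(B_{g_t}((\exp_X^{-1})^*g)\bigr)=B_{\exp_X^*g_t}(g)$. So apply Lemma~\ref{IFT} to $G(X):=B_{\exp_X^*g_t}(g)$; now the diffeomorphism only acts on the smooth metric $g_t$. Since $B_h(h)=0$, one gets $dG_0=-2B_{g_t}\delta^*_{g_t}+E$. Here $E(X)$ has the form $\nabla^2X*(g-g_t)+\nabla X*\nabla(g-g_t)+\Rm*X*(g-g_t)$ and satisfies $\|E\|\le C\|g-g_t\|_{\ctab}$. Then set $\varphi:=\exp_X^{-1}$.

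The second problem is more serious and sits in your last step. Proposition~\ref{prop-gauge-linear} only gives $|X_A|\lesssim\|g-g_t\|_{\ctab}$. The terms $\chi_{A(t,\delta)}\mathcal L_{X_A}g_t$ and $2\,d\chi_{A(t,\delta)}\odot X_A^\flat$ have pointwise size $\sim|X_A|$ where $r_t\sim\sqrt t$, and there the weight of Remark~\ref{rmk3.1}(ii) is $\sim t^{\beta/2}$. So ``cutoff errors of size $\|X_A\|$'' yield only $\|\varphi^*g-g\|_{\ctab}\lesssim t^{-\beta/2}\|g-g_t\|_{\ctab}$. The decoupled tensor norm of Definition~\ref{tensor-norms} does not rescue this, since $d\chi_{A(t,\delta)}\odot X_A^\flat$ is not of the form $\chi_{A(t,\delta)}h_A$.

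This is not an artefact of the decomposition, as the following example shows.
\begin{itemize}
\item[(a)] Take $g=g_t+\epsilon\psi g_0$, with $\psi\ge0$ a bump supported away from $p$. Then $B_{g_0}(\psi g_0)=\frac{m-2}{2}d\psi$.
\item[(b)] As $t\to0$, the linearised gauge field converges on $M_0\setminus\{p\}$ to $\epsilon\nabla u$, where $\Delta u+2\mu u=\frac{m-2}{2}\psi$.
\item[(c)] The maximum principle gives $u<0$, so $\operatorname{tr}\nabla^2u(p)=-2\mu u(p)\neq0$.
\item[(d)] Hence, to first order in $\epsilon$, $\varphi^*g-g\approx2\epsilon\nabla^2u$ does not vanish near $p$.
\item[(e)] For every fixed $\rho$ and all small $t$, its weighted norm evaluated at $r_t=\rho$ is $\gtrsim\epsilon\rho^{-\beta}$, while $\|g-g_t\|_{\ctab}\sim\epsilon$.
\end{itemize}

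So with $\delta^2=16t$ the bound cannot hold with $C$ independent of $t$. The conclusion has to be weakened. One option is to decouple constant tensors on a region $\{r_t\le\rho_0\}$ with $\rho_0$ fixed; equivalently, keep $\delta$ independent of $t$ and let $C$ depend on $\delta$.
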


\section{Infinitesimal Einstein deformations}
In this section we study infinitesimal Einstein deformations (IED), which are solutions to the linearization of Einstein equation in Bianchi gauge. We will study IED on Ricci-flat ALE manifold $(N,g_b)$ and those on compact orbifold $(M_0,g_0)$. Then we approximate IED on the glued manifold $(M,g_t)$, again in a gluing manner.

\subsection{On Ricci-flat ALE}
Given a Ricci-flat ALE manifold $(N^m,g_b)$, suppose there is a family of Ricci-flat ALE metric $g_{s}$ with $g_{0}=g_b$. In this case, the infinitesimal deformation $h=\frac{d}{ds}\big|_{s=0}g_{s}$ must satisfies
$$d\Ric_{g_b}(h)=0\quad\text{and}\quad h=O(r^{-\varepsilon})\;\text{for some }\varepsilon>0.$$
On Ricci-flat manifolds,
\[
d\Ric_{g_b}(h)= \frac{1}{2}\nabla^*_{g_b} \nabla_{g_b} h - \mathring{R}_{g_b}h - \delta^*_{g_b}(B_{g_b} h) ,
\]
where $(\mathring{R}_{g_b}h)_{ij} := R_{ikjl}h^{kl}$, $(\delta_{g_b}h)_i:=-g_b^{jk}\nabla_kh_{ij}$, $B_{g_b} h := \delta_{g_b} h + \frac{1}{2}d(\text{tr}_{g_b} h)$ and $(\delta^*_{g_b}\omega)_{ij}:=\frac{1}{2}(\nabla_i\omega_j+\nabla_j\omega_i)$. 

By diffeomorphism invariance of $\Ric$, we know that for any diffeomorphism $\varphi:M\to M$, $\Ric(\varphi^*g_b)=\varphi^*\Ric(g_b)$. By slice theorem in weighted function space, we can reduce the kernel of $d\Ric_{g_b}$ to Bianchi gauge, that is, infinitesimal deformations transverse to $\text{Diff}(M)$-orbits, such that $B_{g_b} h = 0$. 

\begin{notation}\label{P_g}
    For an arbitrary Riemannian metric $g$, we denote 
    $$P_{g}:=\frac{1}{2}\nabla^*_{g} \nabla_{g} - \mathring{R}_{g}$$
    acting on symmetric $(0,2)-$tensor fields. Compared to $d\Ric_{g}$, it has an advantage that it is an elliptic operator.
\end{notation}

\begin{definition}\label{bIED}
    Given a Ricci-flat ALE manifold $(N,g_b)$, we define the space of \textbf{infinitesimal Einstein deformations} as
    \begin{align}
        \SO(g_b):=\{h\in\ker(P_{g_b})|h=O(r^{-\varepsilon})\text{ for some }\varepsilon>0\}.
    \end{align}
\end{definition}
It is known that $\SO(g_b)$ is indeed a subspace of $\ker(d\Ric_{g_b})$, and elements of $\SO(g_b)$ have more properties and better decay, as listed in the following proposition.
\begin{proposition}\label{IEDb}
    For Ricci-flat ALE manifold $(N^m.g_b)$, any $h\in\SO(g_b)$ satisfies
    \begin{enumerate}
        \item[(1)] $\delta_{g_b}h=0,\;\mathrm{tr}_{g_b}h=0$;
        \item[(2)] $B_{g_b}h=0,\;d\Ric_{g_b}h=0$;
        \item[(3)] $h=H^m+O(r^{-m-\varepsilon})$, where $H^m$ is a symmetric $(0,2)$ tensor with coefficients homogenous of order $r^{-m}$. In particular, $h$ is $L^2-$integrable on $(N,g_b)$.
    \end{enumerate}
\end{proposition}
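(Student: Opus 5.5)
The plan is to derive (1) from two Bochner/maximum-principle arguments that use Ricci-flatness, to get (2) formally from (1), and to obtain (3) by an asymptotic analysis of $P_{g_b}$ on the ALE end, where the identities in (1) are used to rule out the slowest decaying homogeneous terms.

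\textbf{Trace.} Take the trace of $P_{g_b}h=0$. Since $\mathrm{tr}_{g_b}(\nabla^*\nabla h)=\Delta\,\mathrm{tr}_{g_b}h$ (with $\Delta=\nabla^*\nabla$ on functions) and $\mathrm{tr}_{g_b}(\mathring{R}_{g_b}h)=\langle \Ric_{g_b},h\rangle=0$, we get $\Delta\,\mathrm{tr}_{g_b}h=0$. Because $\mathrm{tr}_{g_b}h=O(r^{-\varepsilon})\to0$ at infinity, the maximum principle gives $\mathrm{tr}_{g_b}h=0$.

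\textbf{Divergence.} We use the standard commutation identity on Ricci-flat manifolds: $\delta_{g_b}$ intertwines the Lichnerowicz Laplacian $\nabla^*\nabla-2\mathring{R}$ on symmetric 2-tensors with the Hodge Laplacian on 1-forms. By Bochner with $\Ric=0$, the Hodge Laplacian on 1-forms equals $\nabla^*\nabla$. Since $2P_{g_b}$ is exactly the Lichnerowicz Laplacian, $\omega:=\delta_{g_b}h$ satisfies $\nabla^*\nabla\omega=0$. Then
\[
-\tfrac12\nabla^*\nabla|\omega|^2=|\nabla\omega|^2\ge0,
\]
so $|\omega|^2$ is subharmonic. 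By interior estimates in the ALE chart, $\omega=O(r^{-1-\varepsilon})$, so the maximum principle forces $\omega=0$.

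\textbf{Part (2).} With $\delta_{g_b}h=0$ and $\mathrm{tr}_{g_b}h=0$ we get $B_{g_b}h=0$. The formula for $d\Ric_{g_b}$ then gives $d\Ric_{g_b}h=P_{g_b}h-\delta^*_{g_b}B_{g_b}h=0$.

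\textbf{Part (3).} Work in an optimal ALE chart from Theorem \ref{ALEcoor}(1). There $g_b-g_E=O(r^{-m})$, the Christoffel symbols are $O(r^{-m-1})$ and the curvature is $O(r^{-m-2})$. Hence, componentwise,
\[
\tfrac12\Delta_E h=(\tfrac12\Delta_E-P_{g_b})h=O(r^{-m-2-\varepsilon})
\]
once $h=O(r^{-\varepsilon})$. Apply Lemma \ref{Fredholm-on-blow-up limit} componentwise, cut off on the end, and bootstrap the decay rate, gaining $m$ orders each step. This shows that $h$ equals a finite sum of $\Gamma$-invariant, decaying, homogeneous $\Delta_E$-harmonic tensors plus $O(r^{-m-\varepsilon})$. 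The only decay rates in $(\varepsilon,m]$ allowed by the indicial roots are $r^{2-m}$ (constant coefficient tensor $A_{ij}r^{2-m}$), $r^{1-m}$ (terms $A_{ijk}\partial_k r^{2-m}$) and $r^{-m}$ (second derivatives of $r^{2-m}$, which give $H^m$).

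Next, impose $\delta_{g_b}h=0$ and $\mathrm{tr}_{g_b}h=0$ order by order; the error from $g_b-g_E$ only enters at order $r^{-2m+1}$. At order $r^{1-m}$ the divergence of $A_{ij}r^{2-m}$ is $(2-m)A_{ij}x^jr^{-m}$. Its vanishing forces $A=0$.

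The main obstacle I expect is the degree $1-m$ term $A_{ijk}\partial_k r^{2-m}$, since the divergence and trace conditions do not obviously kill it. I would first try to exploit that such terms are, up to lower order, of the form $\delta^*_{g_E}X$ plus a divergence-free harmonic remainder, with $X$ a harmonic vector field of order $r^{2-m}$. Then I would use an integration by parts on $\Omega_r$ to show its coefficients vanish: pair $P_{g_b}h=0$ with Killing or linear fields of $g_E$, in the spirit of the flux computation in the proof of Proposition \ref{Poisson-solution}. Failing that, I would use the Bianchi-gauge structure of the optimal coordinates. Once (3) is established, $|h|^2=O(r^{-2m})$, which is integrable against $r^{m-1}\,dr$, so $h\in L^2(g_b)$.
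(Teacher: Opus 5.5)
Your arguments for (1) and (2) are the same as the paper's. So is the reduction in (3) to an expansion $h_{ij}=a_{ij}r^{2-m}+b_{ijk}x^kr^{-m}+O(r^{-m})$, with $a_{ij}$ killed by the divergence at order $r^{1-m}$. The genuine gap is the degree $1-m$ term. This is your $A_{ijk}\partial_kr^{2-m}$, i.e.\ $b_{ijk}=(2-m)A_{ijk}$. You leave it open and only sketch speculative flux or gauge arguments. In fact a short algebraic computation using \emph{both} identities from (1) removes it.

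Since $h$ is symmetric, $b_{ijk}=b_{jik}$. At order $r^{-m}$ the only contribution to $\delta_{g_b}h$ is the Euclidean divergence of the $b$-term. The corrections from $g_b-g_E$, from $H^m$ and from the remainder are all of lower order. The vanishing of this divergence gives $b_{ill}\,r^2=m\,b_{ijk}x^jx^k$. Equivalently, $s_{ijk}:=b_{ijk}+b_{ikj}=2\tilde b_i\delta_{jk}$ with $\tilde b_i:=\frac1m b_{ill}$. Symmetry in the first two indices recovers $b$ from $s$ via $2b_{ijk}=s_{ijk}+s_{jik}-s_{kij}$. Hence
\[
b_{ijk}=\tilde b_i\delta_{jk}+\tilde b_j\delta_{ik}-\tilde b_k\delta_{ij},\qquad b_{ijk}x^kr^{-m}=\tfrac{2}{2-m}\,\delta^*_{g_E}\bigl(\tilde b\,r^{2-m}\bigr)_{ij}-\langle\tilde b,x\rangle r^{-m}\delta_{ij}.
\]
This family is symmetric, componentwise harmonic and divergence-free. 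That confirms your suspicion that the divergence alone cannot remove it: it is exactly a $\delta^*_{g_E}X$ term corrected by a pure trace. However, its Euclidean trace is $(2-m)\langle\tilde b,x\rangle r^{-m}$. Since $\mathrm{tr}_{g_b}h=0$ at order $r^{1-m}$ and $m\neq 2$, this forces $\tilde b=0$. You then get $h=H^m+O(r^{-m-\varepsilon})$, and your $L^2$ conclusion follows.

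For comparison, the paper handles this term with the divergence identity plus the symmetry $H_{ij}=H_{ji}$, asserting $b_{ijk}=0$ for $j\neq k$. The divergence identity only determines the $(j,k)$-symmetrization of $b_{ijk}$, though. The family displayed above passes both the divergence and the symmetry tests. So the trace-free condition from (1) is genuinely needed to close this step, and no flux integrals or Killing-field pairings are required.
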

\begin{proof}
    (1) On a Ricci-flat manifold, using second Bianchi identity and Ricci identities, we have
    $$\delta_{g_b}P_{g_b}=\frac{1}{2}\nabla^*_{g_b}\nabla_{g_b}\delta_{g_b},\quad \mathrm{tr}_{g_b}P_{g_b}=\frac{1}{2}\nabla^*_{g_b}\nabla_{g_b}\mathrm{tr}_{g_b}.$$
    Thus for $h\in\SO(g_b)$, both $\delta_{g_b}h$ and $\mathrm{tr}_{g_b}h$ are harmonic and decaying to $0$ at infinity. By the maximum principle, they vanish identically.

    (2) By definition,
    $$B_{g_b}h=\delta_{g_b}h+\frac{1}{2}d(\mathrm{tr}_{g_b}h)=0\quad\text{(using (1))},$$
    and 
    $$d\Ric_{g_b}h=P_{g_b}h-\delta^*_{g_b}(B_{g_b}h)=0.$$

    (3) Recall that $(g_b)_{ij}=\delta_{ij}+O(r^{-m})$ and $h_{ij}=O(r^{-\varepsilon})$. Hence  
    \[
    \Delta_{g_E} h_{ij} = (\delta^{kl} - g_b^{kl}) \nabla_k \nabla_l h_{ij} + g_b^{kl} \nabla_k \nabla_l h_{ij}=(\delta^{kl} - g_b^{kl}) \nabla_k \nabla_l h_{ij} + R_{ikjl}h^{kl} = O(r^{-m-2-\varepsilon}).
    \]  

    For any \( i,j \), there exists a smooth function \( k_{ij} \) on \( (\mathbb{R}^m \setminus B_1(0))/\Gamma \) such that  
    \[
    \begin{cases}
        \Delta_{g_E} k_{ij} = \Delta_{g_E} h_{ij}\\
        k_{ij} = O(r^{-m  - \varepsilon}).
    \end{cases}.
    \]

    Write \( h_{ij} - k_{ij} =: H_{ij} \). Each \( H_{ij} \) is a harmonic function on \( (\mathbb{R}^m \setminus B_1(0))/\Gamma \), hence it can be expanded in terms of harmonic polynomials of degree \( 2-m, 1 - m, -m, \cdots \).  Now 
    \[ h_{ij} = H_{ij} + O(r^{-m - \varepsilon}), \]
    and by (1) we have \( \delta_{g_b} h = 0 \). We claim that this rules out \( r^{2 - m} \) and \( r^{1-m} \) terms in \( H_{ij} \), which leads to the desired result.  

    Indeed, write  
    \[
    H_{ij} = a_{ij} r^{2 - m} + b_{ijk} \frac{x^k}{r^m} + O(r^{-m}), \quad a_{ij},b_{ijk}\in\R,
    \]  
    the leading terms of \( \delta_{g_b}(h) \) is  
    \[
    \begin{aligned}
        &\delta_{g_E}\left[ \left(a_{ij} r^{2 - m} + b_{ijk} \frac{x^k}{r^m}\right) dx^i\otimes dx^j \right] \\
        =&\left[ (2-m)a_{ij}x^jr^{-m}+b_{ijk}\left(\delta_{jk}r^{-m}-mx^jx^kr^{-m-2}\right)     \right]dx^i.
    \end{aligned}
    \]
    It vanishes identically, so $a_{ij}=0$ for each $i,j$, and 
    \[\sum_lb_{ill}r^2=m\sum_{j,k}b_{ijk}x^jx^k.\]
    This implies that for $j\neq k$, $b_{ijk}=0$; and for each $j$, $b_{ijj}=\frac{1}{m}\sum_lb_{ill}:=\tilde{b}_i$. Now
    \[H_{ij}=\tilde{b}_i\frac{x^j}{r^m}+O(r^{-m}).\]
    Since for each $i,j$, $0=H_{ij}-H_{ji}=\left(\tilde{b}_ix^j-\tilde{b}_jx^i\right)r^{-m}+O(r^{-m})$, we get each $\tilde{b}_i=0$, hence $H_{ij}=O(r^{-m})$.
\end{proof}

It turns out that, the Poisson solution $u$ in Proposition \ref{Poisson-solution} gives an explicit element of $\SO(g_b)$. 
\begin{proposition}
    For any Ricci flat ALE space $(N^m.g_b)$, we have
    \[(\mathscr{L}_{\nabla u}g_b)^{\circ}\in\SO(g_b),\]
    where $(\cdot)^{\circ}$ denotes the traceless part of a $(0,2)-$tensor. Moreover, we have the following asymptotic expansion on $N\setminus K$:
    \begin{equation}\label{o0expansion}
        (\mathscr{L}_{\nabla u}g_b)^{\circ}_{ij}=-4m\mathrm{Area}_{g_E}\left(S^{m-1}/\Gamma\right)^{-1}\mathcal{V}\left(\frac{mx^ix^j}{r^{m+2}}-\frac{\delta_{ij}}{r^m}\right)-2mW^\infty_{ikjl}\frac{x^kx^l}{r^{m+2}}+O(r^{-m-1+\varepsilon}).
    \end{equation}
    Here, $\mathcal{V}$ is the renormalized volume of $(N,g_b)$, and $W^{\infty}$ is the asymptotic Weyl curvature of $(N,g_b)$. We refer the reader back to Definition \ref{WVdef} for their definitions.
\end{proposition}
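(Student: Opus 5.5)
The plan is to set $h:=(\mathscr{L}_{\nabla u}g_b)^{\circ}$ and check its three defining properties in turn: $P_{g_b}h=0$, decay, and the expansion \eqref{o0expansion}. I use the sign convention for $\Delta$ in which $\Delta r^2=2m$ on $\R^m$, so $\Delta=\mathrm{tr}\nabla^2$. Then $\mathscr{L}_{\nabla u}g_b=2\nabla^2u$ and $\mathrm{tr}_{g_b}(2\nabla^2 u)=2\Delta u=4m$. Since $\Delta u=2m$ by Proposition \ref{Poisson-solution}, we get
\[h=2\nabla^2u-4g_b=\mathscr{L}_{\nabla u}g_b-4g_b .\]
The point is that $h$ is a Lie derivative plus a constant multiple of the metric, and both kinds of term are invisible to the linearized Ricci operator on a Ricci-flat manifold.

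\textbf{Step 1: $P_{g_b}h=0$.} By diffeomorphism invariance, $d\Ric_{g_b}(\mathscr{L}_Xg_b)=\mathscr{L}_X\Ric_{g_b}=0$. By scale invariance of $\Ric$, $d\Ric_{g_b}(g_b)=0$. Hence $d\Ric_{g_b}h=0$. Since $P_{g_b}h=d\Ric_{g_b}h+\delta^*_{g_b}B_{g_b}h$, it remains to show $B_{g_b}h=0$.

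First, $B_{g_b}g_b=0$, because $\delta_{g_b}g_b=0$ and $\mathrm{tr}_{g_b}g_b=m$ is constant. Next, the Ricci identity gives
\[(\delta_{g_b}\nabla^2u)_i=-\nabla^j\nabla_j\nabla_iu=-\nabla_i\Delta u-\Ric_{ij}\nabla^ju=0,\]
using $\Delta u=2m$ and $\Ric=0$. Also $d\,\mathrm{tr}(\nabla^2u)=d\Delta u=0$. Therefore $B_{g_b}h=0$ and $P_{g_b}h=0$.

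\textbf{Step 2: decay and expansion.} I work in the optimal ALE coordinates of Theorem \ref{ALEcoor}(1), where $g_{ij}=\delta_{ij}+W^\infty_{ikjl}x^kx^lr^{-m-2}+O(r^{-m-1})$. By Proposition \ref{Poisson-solution}, $u=r^2+br^{2-m}+O(r^{1-m+\varepsilon})$ with $b=\frac{2m}{2-m}\mathrm{Area}_{g_E}(S^{m-1}/\Gamma)^{-1}\mathcal{V}$. I split $h$ into three contributions.
\begin{itemize}
\item \emph{The $r^2$ part.} We have $\nabla^2_{ij}r^2=2\delta_{ij}-2\Gamma^k_{ij}x^k$. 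The only nontrivial computation is
\[\Gamma^k_{ij}x^k=\tfrac12(\partial_ig_{jk}+\partial_jg_{ik}-\partial_kg_{ij})x^k .\]
Write $\sigma_{ij}:=g_{ij}-\delta_{ij}$. The Weyl antisymmetry $W_{jakb}=-W_{jabk}$ gives $\sigma_{jk}x^k=0$ at leading order. Differentiating this identity gives $x^k\partial_i\sigma_{jk}=-\sigma_{ij}$. Euler's relation gives $x^k\partial_k\sigma_{ij}=-m\sigma_{ij}$, since $\sigma$ is homogeneous of degree $-m$. Altogether
\[\Gamma^k_{ij}x^k=\tfrac12(-2+m)\sigma_{ij},\]
so $\nabla^2r^2-2g_b=-m\sigma+O(r^{-m-1})$. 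This contributes $2\cdot(-m)\,\sigma_{ij}=-2mW^\infty_{ikjl}x^kx^lr^{-m-2}$ to $h$.
\item \emph{The $br^{2-m}$ part.} Up to lower order this is the Euclidean Hessian,
\[2b\,\partial_i\partial_jr^{2-m}=2b(2-m)\left(\delta_{ij}r^{-m}-mx^ix^jr^{-m-2}\right).\]
Substituting $b$ gives exactly the $\mathcal{V}$-term $-4m\mathrm{Area}_{g_E}(S^{m-1}/\Gamma)^{-1}\mathcal{V}\left(\frac{mx^ix^j}{r^{m+2}}-\frac{\delta_{ij}}{r^m}\right)$ of \eqref{o0expansion}.
\item \emph{The remainder.} The $O(r^{1-m+\varepsilon})$ term of $u$ contributes $O(r^{-m-1+\varepsilon})$ after two derivatives. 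This uses the weighted $C^{k,\alpha}$ control from Lemma \ref{Fredholm-on-blow-up limit}.
\end{itemize}
In particular $h=O(r^{-m})$, so $h\in\SO(g_b)$.

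\textbf{Main obstacle.} I expect the hardest step to be the Christoffel contraction in the $r^2$ part, where the Weyl symmetries have to be used carefully to obtain the coefficient $-2m$. A second point needing care is that the error terms stay at order $O(r^{-m-1+\varepsilon})$. This requires the $O(r^{-m-1})$ remainder in $g$ to hold with derivatives, which Notation \ref{bHolder} provides.
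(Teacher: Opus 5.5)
Your proposal is correct and follows the paper's route. You write $(\mathscr{L}_{\nabla u}g_b)^\circ=2\nabla^2u-4g_b$, kill $d\Ric_{g_b}$ by diffeomorphism and scale invariance, kill the divergence/gauge term with the Ricci identity and $\Delta u=2m$, and read off the expansion from those of $g_b$ and $u$. The only difference is that you explicitly carry out the Christoffel contraction $\Gamma^k_{ij}x^k=\frac{m-2}{2}\sigma_{ij}$ (with $\sigma=g_b-g_E$) that yields the coefficient $-2m$, which the paper leaves as "follows directly", and that computation is correct.
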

\begin{proof}
    First of all,
\[
(\mathscr{L}_{\nabla u} g_b)^{\circ} = 2(\operatorname{Hess}_{g_b} u)^{\circ} = 2\operatorname{Hess}_{g_b} u - \frac{2}{m}(\Delta_{g_b} u)g_b = 2\operatorname{Hess}_{g_b} u - 4g_b.
\]

By diffeomorphism invariance of $\mathrm{Ric}$, we know $d\Ric_{g_b} (\mathscr{L}_{\nabla u} g_b) = 0$, hence:
\[
d\mathrm{Ric}_{g_b}\bigl((\mathscr{L}_{\nabla u} g_b)^{\circ}\bigr) = d\mathrm{Ric}_{g_b}(\mathscr{L}_{\nabla u}g_b - 4g_b) = 0.
\]
By the Ricci identity
\[
\Delta\nabla u - \nabla\Delta u = \mathrm{Ric}(\nabla u),
\]
we know that
\[
\divg_{g_b}\bigl((\mathscr{L}_{\nabla u} g_b)^{\circ}\bigr) = 2\divg_{g_b} (\operatorname{Hess}_{g_b} u) = 2\Delta\nabla u = 0.
\]
Putting $d\mathrm{Ric}_{g_b}\bigl((\mathscr{L}_{\nabla u} g_b)^{\circ}\bigr) = 0$, $\divg_{g_b}\bigl((\mathscr{L}_{\nabla u} g_b)^{\circ}\bigr) = 0$, $\operatorname{tr}_{g_b}\bigl((\mathscr{L}_{\nabla u} g_b)^{\circ}\bigr) = 0$ together, we get $P_{g_b}\bigl((\mathscr{L}_{\nabla u} g_b)^{\circ}\bigr) = 0$.

The asymptotic expansion \eqref{o0expansion} follows directly from expansions
\[
g_{b,ij} = \delta_{ij} + W_{ikjl}^{\infty} \frac{x^kx^l}{r^{m+2}} + O(r^{-m-1}),
\]
\[
u = r^2 + \frac{2m}{2 - m} \, \mathrm{Area}_{g_E}\left(S^{m-1}/\Gamma\right)^{-1}\mathcal{V}\cdot r^{2 - m} + O(r^{1 - m+\varepsilon}).
\]
In particular, expansion \eqref{o0expansion} tells that $(\mathscr{L}_{\nabla u} g_b)^{\circ}$ is indeed decaying, so $(\mathscr{L}_{\nabla u} g_b)^{\circ} \in \SO(g_b)$.
\end{proof}

\subsection{On compact orbifolds}
For a compact Einstein orbifold $(M_0,g_0)$, we denote $\mathscr{M}$ to be the space of smooth metrics on $M_0$, and $\mathscr{M}_1$ to be the space of smooth metrics with the same volume as $g_0$. Then
\[T_{g_0}\mathscr{M}_1=\left\{h\in \Gamma(Sym^2T^*M_0)\left|\int_{M_0}(\mathrm{tr}_{g_0}h)dV_{g_0}=0\right.\right\},\]
and
\[T_{g_0}\mathscr{M}=T_{g_0}\mathscr{M}_1\oplus\R\langle g_0\rangle .\]
In other words, we split infinitesimal deformations into two parts, one is volume-preserving and the other consists of rescaling of the metric. Since a rescaling of Einstein metric is still Einstein, $\lambda g_0$ is indeed an IED. It is well-know that, the space of "volume-preserving IED in Bianchi gauge" coincides with $\ker\left(P_{g_0}|_{T_{g_0}\mathscr{M}_1}\right)$. For instance, one can see {\cite[equation (12.28)]{Besse}} for this fact.

\begin{notation}\label{IED0}
    We define $\SO(g_0)=\ker(P_{g_0})\cap T_{g_0}\mathscr{M}_1$, which consists of only volume-preserving IED of $g_0$. In our notation, the space of all IEDs of $g_0$ is $\SO(g_0)\oplus\R\langle g_0\rangle$.
\end{notation}

\subsection{Approximate IED on glued manifold}
Since $P_{g_b}$ and $P_{g_0}$ are both elliptic, by choosing weighted function spaces in non-exceptional weights, we know they are Fredholm operators. Hence $\SO(g_b)$ and $\SO(g_0)$ are finite dimensional. 

We glue $\SO(g_b)$ and $\SO(g_0)$ to approximate IED space on $(M,g_t)$. To fit in the estimate under decoupling norms $\ctab$, we need to decouple elements in $\SO(g_0)$ near the orbifold singularity, as we will describe now.

For the orbifold point $p\in M_0$, we choose a cutoff function $\chi_{B(p,\delta)}$ supported in $d(p,\cdot)<2\delta$ and equals to $1$ in $d(p,\cdot)<\delta$. For $O\in\SO(g_0)$, we decompose $O=O_*+\chi_{B(p,\delta)}O_A$, where $O_*\in C^{2,\alpha}_{\beta}(g_0)$ and $O_A$ is a constant symmetric 2-tensor, chosen such that
\[\|O_*\|_{C^{2,\alpha}_{\beta}(g_0)}+|O_A|_{g_E}=\inf\left\{\|h_*\|_{C^{2,\alpha}_{\beta}(g_0)}+|h_A|_{g_E}\right\}\]
where the infimum is taken among $O=h_*+\chi_{A(t,\delta)}h_A$, $h_*\in C^{2,\alpha}_{\beta}(g_0)$ and $h_A$ is a constant symmetric 2-tensor on $\R^m/\Gamma$. Thanks to the cufoff functions in Notation \ref{cut-on-neck}, we define a symmetric 2-tensor on $M$ as
\[\bar{O}_t=(1-\chi_t)O_*+\chi_{A(t,\delta)}O_A.\]
Also, for $o\in\SO(g_b)$, we define a symmetric 2-tensor on $M$ as
\[\bar{o}_t=t^2\chi_to.\]

\begin{definition}\label{AIED}
    Suppose $\SO(g_b)$ is spanned by $\{o_i\}_i$ and $\SO(g_0)$ is spanned by $\{O_j\}_j$, we define the space of \textbf{approximate IED on $\gtd$} as the finite dimensional vector space spanned by the following tensors:
    \begin{itemize}
        \item $\bar{o}_{i,t}+\bar{O}_{j,t}$
        \item $\gtd$
    \end{itemize}
    and we denote the approximate IED space as $\tilde{\SO}(\gtd)$. Also, we denote $\tilde{\SO}(\gtd)^{\perp}$ to be its $L^2(\gtd)-$orthogonal complement.
\end{definition}

In definition \ref{AIED}, the first type of tensors consist of gluing $\SO(g_b)$ with volume-preserving IED of $g_0$, and the second type of tensors are true IED of $\gtd$ corresponding to rescaling of the metric.
\begin{notation}\label{pi}
    We define the projection $\pi$ onto $\TO(\gtd)$ as
    \[\pi(h):=\sum_i(h,\tilde{o}_i)_{L^2(\gtd)}\tilde{o}_i,\]
    where $\{\tilde{o}_i\}$ is any orthonormal basis of $\TO(\gtd)$. Also, we denote $\pi^\perp:=1-\pi$ to be the projection onto $\TO(\gtd)^\perp$.
\end{notation}

To show that $\TO(\gtd)$ is sufficient to approximate IED space of $g_t$, we estimate the elliptic opertator $P_{g_t}$, after ruling out this approximate kernel. 
\begin{proposition}\label{P-1}
    There exist constants $0<\delta_0\ll1$ such that, for all $0<\delta<\delta_0$ and $t=\frac{\delta^2}{16}$, the restriction (acting on $(0,2)$-tensor fields)
    \[\pi^\perp\circ P_{g_t}:\ctab(\gtd)\cap\tilde{\SO}(g_{t})^{\perp}\to \cab(\gtd)\cap\TO(\gtd)^\perp\]
    is invertible. Moreover, there exists $C>0$ such that
    \begin{equation}\label{equ3.5}
        \|s\|_{\ctab}\leq C\|\pi^\perp\circ P_{g_{t}}s\|_{\cab}, \quad\forall s\in\ctab(\gtd) \cap\tilde{\SO}(\gtd)^{\perp}.
    \end{equation}
\end{proposition}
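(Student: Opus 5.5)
We argue by contradiction combined with a blow-up analysis, following Ozuch's scheme for the analogous estimate in dimension four. Once the a priori estimate \eqref{equ3.5} holds uniformly in $t$, invertibility follows from it. The map $\pi^\perp\circ P_{g_t}$ is a finite-rank perturbation of the elliptic operator $P_{g_t}$. On the compact manifold $M$ with fixed $t$, $P_{g_t}$ is Fredholm of index zero, being formally self-adjoint, and the weighted norms are equivalent to the unweighted ones for fixed $t$. Hence the restriction to $\TO(g_t)^\perp$ is Fredholm of index zero, and injectivity, which the estimate gives, implies surjectivity.

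So suppose \eqref{equ3.5} fails. Then there are $t_i\to0$ and $s_i\in\ctab(g_{t_i})\cap\TO(g_{t_i})^\perp$ with $\|s_i\|_{\ctab}=1$ and $\|\pi^\perp P_{g_{t_i}}s_i\|_{\cab}\to0$. First, control the projection term. By construction, $P_{g_t}\tilde o$ is small in $\cab$ for every $\tilde o\in\TO(g_t)$: $P_{g_b}o=0$, $P_{g_0}O=0$, and $P_{g_t}g_t$ is concentrated on the cutoff region with a gain of a positive power of $t$. Also, $P_{g_t}$ is formally self-adjoint. Therefore $\pi P_{g_t}s_i=\sum_j(s_i,P_{g_t}\tilde o_j)\tilde o_j$ tends to $0$, and so $\|P_{g_{t_i}}s_i\|_{\cab}\to0$. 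For $0<\beta<1$, the weighted $L^\infty$ norms control the $L^2$ pairings.

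Next, pick points $x_i$ where the weighted $C^0$ norm of $s_i$ is comparable to $1$; the weighted Schauder estimates for $P_{g_t}$ show the $C^0$ part dominates the full norm. Three cases arise.

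Case 1: $x_i$ stays in the orbifold region $r_{t_i}\ge c$ (possibly with $r_{t_i}(x_i)\to0$ handled in the next cases). Then $s_i$ converges locally on $M_0\setminus\{p\}$ to $s_\infty$ with $P_{g_0}s_\infty=0$ and $|s_\infty|\le Cr^\beta$ near $p$. Since $0<\beta<1$ and there are no indicial roots in $(0,1)$ for $P$ on the cone, removable singularity theory makes $s_\infty$ a smooth element of $\ker P_{g_0}=\SO(g_0)\oplus\R g_0$. Orthogonality to $\TO(g_{t_i})$ passes to the limit and gives $s_\infty\perp\SO(g_0)\oplus\R g_0$, so $s_\infty=0$, a contradiction.

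Case 2: $x_i$ lies in the bubble region $r_{t_i}\lesssim t_i$. Rescaling by $t_i^{-2}$, we get a limit on $N$ with $P_{g_b}s_\infty=0$ and $s_\infty=O(r_b^{-\beta})$, so $s_\infty\in\SO(g_b)$. Orthogonality to the glued $\bar o_{j,t}$ then forces $s_\infty=0$. This requires some care, because the $L^2$ pairing with $\bar o_t=t^2\chi_t o$ is localized on $N$ by the $r^{-m}$ decay of Proposition \ref{IEDb}.

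Case 3, the main obstacle: the neck, with $t_i\ll r_{t_i}(x_i)\ll1$. Rescaling by $r_{t_i}(x_i)$ gives a limit on $(\R^m\setminus\{0\})/\Gamma$ with $\nabla^*\nabla s_\infty=0$ and $|s_\infty|\le |x|^\beta+|x|^{-\beta}$. Such tensors are exactly the constant symmetric 2-tensors, which the decoupling norm of Definition \ref{tensor-norms} accounts for separately. To rule out this case, we subtract the optimal $\chi_A h_A$ part; the remainder is then genuinely small in the $\beta$-weighted sense and cannot concentrate on the neck. We also need to show the constant part $h_{A,i}$ itself tends to $0$. For this, pass to the orbifold limit as in Case 1: the constant $h_A$ must match the value at $p$ of a kernel element of $P_{g_0}$, which vanishes by orthogonality. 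If $h_A\neq0$ without a matching global kernel element, one derives a contradiction via an integration by parts (Green's formula) on annuli. This matching argument across scales is what makes the decoupled norms necessary. I expect the bookkeeping here, following \cite[Prop.\ 3.13]{Ozu2} verbatim with dimension $m$ in place of $4$, to be the hardest step. Combining the three cases contradicts $\|s_i\|_{\ctab}=1$, which proves \eqref{equ3.5}.
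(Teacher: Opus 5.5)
Your overall route (contradiction, blow-up limits in the orbifold, bubble and neck regimes, and index zero of the compressed operator for surjectivity) is the one the paper intends; the paper itself gives no proof and defers to Ozuch's Proposition 4.9. Two steps, however, do not hold as written.

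\textbf{The reduction to $\|P_{g_{t_i}}s_i\|_{\cab}\to0$.} It is false that $P_{\gtd}\tilde o$ is small in $\cab$ for every $\tilde o\in\TO(\gtd)$.
\begin{itemize}
\item $P_{\gtd}\gtd=-\Ric(\gtd)$, which equals $-\mu g_0$ on all of $M_0^\delta$.
\item For $\bar O_t$ with $O(p)\neq0$, cutting off the constant at scale $\sqrt t$ costs $t^{-\beta/2}$ in $\cab$.
\end{itemize}
The conclusion survives only through $L^2$ pairings and the orthogonality of $s_i$. Since $s_i\perp\gtd$, you get $(s_i,P_{\gtd}\gtd)=-(s_i,\Ric(\gtd)-\mu\gtd)=O(t^{m/2})$, because $\Ric(\gtd)-\mu\gtd$ is bounded and supported in $\{\rtd<2\sqrt t\}$. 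Likewise $P_{\gtd}\bar O_t$ is $O(t^{-1})$ on a set of volume $O(t^{m/2})$ and vanishes elsewhere. Relatedly, $g_0\notin\ker P_{g_0}$, since $P_{g_0}g_0=-\mu g_0$. For $\mu<0$ the identity $\tr P_{g_0}h=\frac12\nabla^*\nabla\tr h-\mu\tr h$ gives $\ker P_{g_0}=\SO(g_0)$, which is what Case 1 actually needs.

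\textbf{Case 3, the more serious problem.} Your matching step needs the decoupling cutoff to equal $1$ across the neck out to a fixed scale on the orbifold side, so that $\lim h_{A,i}$ reappears as the value at $p$ of the orbifold limit. In Definition \ref{tensor-norms} with $\delta^2=16t$, $\chi_{A(t,\delta)}$ is supported in $\{\sqrt t/4\le\rtd<4\sqrt t\}$. Then the orbifold limit is bounded by $r^\beta$ and always vanishes at $p$, so it carries no information on $h_A$. In fact the estimate then fails, as the following example shows.
\begin{itemize}
\item Take $\phi\in C^\infty_c(M_0\setminus\{p\})$ with $\int\phi\,d\V_{g_0}=\V(g_0)$, and set $s=t^{\beta/2}(1-\phi)\gtd$.
\item Because the $O_j$ and $o_i$ are traceless, $s$ is $L^2$-orthogonal to $\TO(\gtd)$ up to errors of order $t^{(m+\beta)/2}$, whose projection is negligible.
\item $P_{\gtd}s=-t^{\beta/2}\bigl((1-\phi)\Ric(\gtd)+\frac12(\nabla^*\nabla\phi)\gtd\bigr)$ has $\cab$-norm $O(t^{\beta/2})$.
\item On $\{4\sqrt t\le\rtd\le8\sqrt t\}$ we have $\chi_{A(t,\delta)}=\chi_t=\phi=0$, so $s=t^{\beta/2}g_0$ there. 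Hence every decomposition has $\|s_*\|_{\ctab}\ge4^{-\beta}\sqrt m$, and $\|s\|_{C^{2,\alpha}_{\beta,*}}\ge4^{-\beta}\sqrt m$ for all $t$.
\end{itemize}

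So both the statement and your Case 3 must be read with a cutoff equal to $1$ on $\{Kt\le\rtd\le\epsilon_0\}$, for fixed $K$ and $\epsilon_0$. In that setting your outline is the right one:
\begin{itemize}
\item Blow-ups at scales $\rho_i$ with $\rho_i/\sqrt{t_i}\to0$ or $\infty$ obey a one-sided weight and vanish.
\item A constant limit at scale $\sqrt{t_i}$ propagates along the neck, since $0$ is the only indicial root in $[-\beta,\beta]$. It can then be absorbed into $h_A$, contradicting near-optimality of the decomposition.
\item $\lim h_{A,i}$ is killed by orthogonality to $\SO(g_0)$ on the orbifold side.
\end{itemize}
In Case 2, the pairing of that neck constant with $o_j$ is then bounded only because the $r^{-m}$ leading term of $o_j$ is a degree-two spherical harmonic and so has zero spherical mean. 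The $r^{-m}$ decay alone leaves a factor $|\log t|$.
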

Again, the proof of this proposition can be obtained by verbatim adaptation of the original argument given by Ozuch. We refer the reader to {\cite[Proposition 4.9]{Ozu2}} for details.

\section{Detecting Einstein metrics in Bianchi gauge}
In this section, we denote $\ctab(\gtd)$ and $\cab(\gtd)$ as symmetric (0,2)-tensor fields with finite weighted (decoupling) H\"older norms. Consider the following map
\[\Phi=\Phi_{\gtd}:\ctab(\gtd)\to\cab(\gtd),\]
\[
\Phi(h) := \Ric(g)- \frac{1}{m }\left( \frac{1}{\V(g)} \int_M \s_g d\V_{g} \right) g + \delta_{\gtd}^* B_{g_t}(g),
\]
where \( g := \gtd + h \). So finding an Einstein metric $g$ near $g_t$ in Bianchi gauge, is equivalent to solving
\[\Phi_{g_t}(g-g_t)=0.\]
Combining Theorem \ref{neckana} and Proposition \ref{Slice-theorem}, we get 
\begin{proposition}\label{Phi=0}
    Suppose $(M_0,g_0)$ is a compact Einstein orbifold with negative Einstein constant, and it admits a resolution sequence. 
    
    Then  there exist sequences of parameters $0<t_i=\frac{\delta_i^2}{16}\ll1$ with $\lim_{i\to\infty}t_i=\lim_{i\to\infty}\delta_i=0$, such that the gluing metrics $\gtidi$ approximate Einstein metrics in Bianchi gauge. 
    
    More precisely, for any $\varepsilon>0$, there exists $i_0\gg 1$, such that for each $i>i_0$ there is an Einstein metric $g_i$ with
    \[B_{\gtidi}g_i=0,\quad\|g_i-\gtidi\|_{\ctab(\gtidi)}< \varepsilon.\]
    In particular, $\Phi_{\gtidi}(g_i-g_{t_i})=0$.
\end{proposition}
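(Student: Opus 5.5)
The argument combines Theorem \ref{neckana} with the slice theorem, Proposition \ref{Slice-theorem}, and then checks that the resulting metric is Einstein. Fix $\varepsilon>0$. First choose $\varepsilon'>0$ with $(C+1)\varepsilon'<\varepsilon$ and $\varepsilon'$ below the threshold of Proposition \ref{Slice-theorem}, where $C$ is the constant from that proposition.

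Apply Theorem \ref{neckana} with $\varepsilon'$. It gives $i_0$, parameters $t_i=\delta_i^2/16\to 0$, and diffeomorphisms $\phi_i:M_0\#N\to M_i$ with $\|\phi_i^*g_i-g_{t_i}\|_{\ctab(g_{t_i})}<\varepsilon'$ for all $i\ge i_0$. Enlarging $i_0$, we may also assume $\delta_i<\delta_0$, so Proposition \ref{Slice-theorem} applies with $t=t_i$.

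That proposition yields a diffeomorphism $\varphi_i$ of $M=M_0\#N$ such that $\tilde g_i:=\varphi_i^*\phi_i^*g_i$ satisfies $B_{g_{t_i}}\tilde g_i=0$. It also gives
\[
\|\tilde g_i-g_{t_i}\|_{\ctab(g_{t_i})}\le (C+1)\|\phi_i^*g_i-g_{t_i}\|_{\ctab(g_{t_i})}<\varepsilon.
\]
The metric $\tilde g_i$ is the pullback of the Einstein metric $g_i$ by the diffeomorphism $\phi_i\circ\varphi_i$. Hence it is Einstein, $\Ric(\tilde g_i)=\mu_i\tilde g_i$, and we rename $\tilde g_i$ as $g_i$.

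It remains to check that $\Phi_{g_{t_i}}(g_i-g_{t_i})=0$. The scalar curvature of $g_i$ is constant, $\s_{g_i}=m\mu_i$, so $\frac1m\big(\frac{1}{\V(g_i)}\int_M \s_{g_i}\,d\V_{g_i}\big)g_i=\mu_i g_i$. This term cancels $\Ric(g_i)$. The gauge term $\delta^*_{g_{t_i}}B_{g_{t_i}}(g_i)$ vanishes by the Bianchi gauge condition. Therefore $\Phi_{g_{t_i}}(g_i-g_{t_i})=0$.

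There is one point of care: the norm in the statement is the plain weighted norm $\ctab$, which bounds the decoupling norm from above, so the conclusion holds as stated. Also, the slice theorem must be applied at the parameter $t_i$ produced by Theorem \ref{neckana}; since $\delta_i\to 0$, we have $\delta_i<\delta_0$ for all large $i$, so this is consistent. I expect no real obstacle: the substantive work sits in Propositions \ref{prop-gauge-linear} and \ref{Slice-theorem}, which we take as given, and the uniformity of their constants in $t$ is exactly what makes the choice of $\varepsilon'$ independent of $i$.
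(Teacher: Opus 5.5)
Your proposal is correct and follows the paper's argument, which combines Theorem \ref{neckana} with the slice theorem, Proposition \ref{Slice-theorem}, applied at $t=t_i$ once $\delta_i<\delta_0$. The paper leaves implicit the check that $\Phi_{g_{t_i}}(g_i-g_{t_i})=0$, and you supply it: the normalized scalar-curvature term cancels $\Ric(g_i)=\mu_i g_i$ for whatever Einstein constant $\mu_i$ the metric has, and the gauge term vanishes.
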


\subsection{The linearized map} 
For now, we fix a glued metric $g_t$ with sufficiently small $t$, and estimate the inverse of \( d\Phi_0 \) restricted on \( \tilde{\SO}(\gtd)^{\perp} \). First we calculate \( d\Phi_0 \). We have
\[
d\Ric_{\gtd} (s) = P_{\gtd} s + \frac{1}{2} \left( \Ric_{\gtd} \circ s + s \circ \Ric_{\gtd} \right) - \delta_{\gtd}^* B_{\gtd} s,
\]
where \( (\Ric_g\circ s)_{ij} := g^{kl} \Ric_{ik} s_{lj} \), \( (s \circ \Ric_g)_{ij} := g^{kl} s_{ik} \Ric_{lj} \). Also,
\[d\s_{g_t}(s)=-\langle\Ric_{g_t},s\rangle+\delta_{g_t}^2 s-\Delta(\tr_{g_t} s),\]
\[
d \left( \V(\cdot) \right)_{\gtd} (s) = \frac{1}{2} \int_M (\tr_{\gtd}s) d\V_{\gtd},
\]
\[
d \left( \int_M \s(\cdot) d\V_{\cdot} \right)_{\gtd} (s) = \int_M \left\langle -\Ric_{\gtd} + \frac{\s_{\gtd}}{2} \gtd, s \right\rangle d\V_{\gtd}.
\]
Combining these together, we have
\[
\begin{split}
d\Phi_0(s) =& P_{\gtd} s + \frac{1}{2} \left( \Ric_{\gtd} \circ s + s \circ \Ric_{\gtd} \right)-\frac{1}{m} \left( \frac{1}{ \V(\gtd)} \int_M \s_{\gtd} d\V_{\gtd} \right) s \\
& +\frac{1}{2m} \frac{\gtd}{ \V(\gtd)^2}  \left( \int_M \tr_{\gtd}s d\V_{\gtd}\right) \left( \int_M \s_{\gtd} d\V_{\gtd} \right)\\
&-\frac{1}{m} \frac{\gtd}{ \V(\gtd)}\int_M \left\langle -\Ric_{\gtd} + \frac{\s_{\gtd}}{2} \gtd, s \right\rangle d\V_{\gtd}.
\end{split}
\]
Restricted to $\TO(\gtd)^\perp$, in particular, 
\[0=\int_M\langle s,g_t\rangle d\V_{g_t}=\int_M(\tr_{g_t}s)d\V_{g_t},\]
we have
\[
\begin{split}
d\Phi_0(s) =& P_{\gtd} s + \frac{1}{2} \left( \Ric_{\gtd} \circ s + s \circ \Ric_{\gtd} \right)-\frac{1}{m} \left( \frac{1}{ \V(\gtd)} \int_M \s_{\gtd} d\V_{\gtd} \right) s \\
&- \frac{1}{m} \frac{\gtd}{ \V(\gtd)}  \int_M \left(\left\langle -\Ric_{\gtd} + \frac{\s_{\gtd}}{m} \gtd, s \right\rangle+\frac{m-2}{2m}\s_{g_t}(\tr_{g_t}s)\right) d\V_{\gtd}.
\end{split}
\]

\begin{lemma}\label{3.5}
For each $0<t=\frac{\delta^2}{16}\ll1$, there exists a constant \( \theta(t) > 0 \), with \( \lim_{t \to 0} \theta(t) = 0 \), such that
\begin{equation}\label{equ3.3}
    \| (\pi^\perp\circ d\Phi_0 -\pi^\perp\circ P_{\gtd}) s \|_{\cab(\gtd)} \leq \theta(t) \| s \|_{\ctab(\gtd)}, \quad \forall s \in \ctab(\gtd) \cap \tilde{\SO}(\gtd)^{\perp}.
\end{equation}
\end{lemma}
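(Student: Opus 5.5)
The difference $d\Phi_0(s)-P_{\gtd}s$ on $\TO(\gtd)^\perp$ is read off from the formula above. It consists of a local zeroth-order term
\[
L(s):=\tfrac12(\Ric_{\gtd}\circ s+s\circ\Ric_{\gtd})-\tfrac1m\bar{\s}\, s,\qquad \bar{\s}:=\frac{1}{\V(\gtd)}\int_M\s_{\gtd}d\V_{\gtd},
\]
and a nonlocal rank-one term $c(s)\gtd$, with
\[
c(s)=-\frac{1}{m\V(\gtd)}\int_M\Big(\big\langle-\Ric_{\gtd}+\tfrac{\s_{\gtd}}{m}\gtd,s\big\rangle+\tfrac{m-2}{2m}\s_{\gtd}\tr_{\gtd}s\Big)d\V_{\gtd}.
\]
Since $\gtd\in\TO(\gtd)$, we have $\pi^\perp(c(s)\gtd)=0$, so the nonlocal term disappears entirely. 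It therefore suffices to bound $\|\pi^\perp L(s)\|_{\cab}\le\theta(t)\|s\|_{\ctab}$. I will first prove $\|L(s)\|_{\cab}\le\theta(t)\|s\|_{\ctab}$. Then I will show that $\pi$ is bounded on $\cab$ uniformly in $t$, since $\pi^\perp=1-\pi$.

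\textbf{Estimate on $L$.} I split $M$ into regions using the cutoffs of Notation \ref{cut-on-neck}. On $\{r_t\ge 2\sqrt t\}$ we have $\gtd=g_0$ and $\Ric=\mu g_0$. First I show $\bar{\s}=m\mu+O(t^{\kappa})$ for some $\kappa>0$. For this I estimate the scalar curvature of $\gtd$ on the neck: there $\gtd-g_E=O(r_t^2)+O(t^m r_t^{-m})$, so $|\Rm_{\gtd}|\lesssim 1+t^m r_t^{-m-2}$ on $\{r_t\sim\sqrt t\}$. On $N^\delta$ we have $\s=0$. Hence $\int_M\s_{\gtd}=m\mu\V(g_0)+O(t^{m/2})$, and likewise for the volume. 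With this, $L(s)=\tfrac12(\mu s+s\mu)-\mu s+O(t^\kappa)s=O(t^\kappa)s$ on $M_0^\delta$. On $N^\delta$, $\Ric_{\gtd}=0$, so $L(s)=-\tfrac1m\bar{\s}s$ with $|\bar{\s}|\le C$. The weight $\rtd^{-2}$ in $\cab$ then gives $\|\rtd^2 L(s)\|\lesssim r_t^2\|s\|\le (t\delta^{-1})^2\|s\|$ in that region, which is small. On the neck, $|\Ric_{\gtd}|+|\bar{\s}|\lesssim 1+t^m r_t^{-m-2}$, so $r_t^2|L(s)|\lesssim (r_t^2+t^m r_t^{-m})|s|\lesssim t\,|s|$ for $r_t\in[\sqrt t/4,4\sqrt t]$. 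Hölder seminorms are handled identically using derivative bounds on the curvature.

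One point must be checked because of the decoupling norm. If $s=s_*+\chi_{A(t,\delta)}s_A$ with $s_A$ constant, the term $L(\chi_A s_A)$ is supported on the neck. There its $\rtd^{-2}C^{0,\alpha}_{\beta,\beta}$ norm is bounded by $\sup r_t^{2}(1+t^mr_t^{-m-2})|s_A|$ divided by the weight $r_t^\beta+(t/r_t)^\beta\sim t^{\beta/2}$. This gives $\lesssim t^{1-\beta/2}|s_A|$, which is small since $\beta<1$.

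\textbf{Uniform boundedness of $\pi$ (main obstacle).} I need $\|\pi h\|_{\cab}\le C\|h\|_{\cab}$ with $C$ independent of $t$. I will use an almost-orthonormal basis of $\TO(\gtd)$: the elements $\bar o_{i,t}+\bar O_{j,t}$ and $\gtd$, normalized in $L^2(\gtd)$. Their Gram matrix converges as $t\to0$ to the block-diagonal Gram matrix formed from $L^2(g_b)$ (after the natural $t$-rescaling), $L^2(g_0)$, and the volume part. This uses Proposition \ref{IEDb}(3), namely $o=O(r^{-m})$, so that $o$ is square integrable and the cross terms on the neck are $o(1)$. It remains to bound $|(h,\tilde o)_{L^2}|\,\|\tilde o\|_{\cab}\lesssim\|h\|_{\cab}$. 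This follows from $|h|\le \rtd^{-2}(r_t^\beta+(t/r_t)^\beta)\|h\|$ together with the decay $o=O(r_b^{-m})$, the boundedness of $O$, and the scaling of each basis element. The step I expect to be delicate is checking that the $t$-powers match for the $\bar o$ components, where the rescaled $L^2$ normalization and the weight $t^{p-q}$ in the norm must cancel. Once that check is done, $\theta(t)=C t^{\kappa'}$ for some $\kappa'>0$.
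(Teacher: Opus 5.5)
Your argument is correct, but it organizes the estimate differently from the paper.

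The paper shows that every extra term in $d\Phi_0(s)-P_{\gtd}s$ is small before any projection is applied. It bounds $\Ric_{\gtd}-\mu\gtd$ in $\cab$, concludes $\tfrac12(\Ric_{\gtd}\circ s+s\circ\Ric_{\gtd})-\tfrac1m\bar{\s}\,s\approx\mu s-\mu s=0$, and then checks that the two integrals multiplying $\gtd$ are approximately zero, using $\s_{\gtd}\approx m\mu$ and $\int_M\tr_{\gtd}s\,d\V_{\gtd}=0$. The uniform boundedness of $\pi^\perp$ on $\cab$, which is needed to reach the projected statement, is left implicit.

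You instead kill the nonlocal part exactly, since it is a multiple of $\gtd\in\TO(\gtd)$. Only the pointwise term remains, and your region-by-region bound (including the decoupled constant $s_A$) gives $\theta(t)\sim t^{1-\beta/2}$. This is also the rate the paper's argument really yields. On $N^\delta$ one has $\Ric_{\gtd}-\mu\gtd=-\mu t^2g_b$, whose $\cab$-norm is of order $t^{1-\beta/2}$ rather than the stated $t^{4-\beta/2}$. This is harmless, since any positive power suffices.

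The two routes buy different things. The paper's gives the stronger unprojected bound $\|(d\Phi_0-P_{\gtd})s\|_{\cab}\le\theta(t)\|s\|_{\ctab}$, which does not depend on the choice of $\TO(\gtd)$. Yours avoids estimating any integral of $s$ against curvature, and it actually addresses the boundedness of $\pi$, which both routes need.

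The check you flag as delicate does close, provided $\bar o_{i,t}$, $\bar O_{j,t}$ and $\gtd$ are normalized separately. Your block-diagonal Gram matrix presupposes this, and the dimension count in Lemma \ref{lem4.12} confirms it is the intended reading. Then $\|\bar o_t\|_{L^2(\gtd)}\sim t^{m/2}$ and $\|\bar o_t\|_{\cab}\lesssim t^2\sup_N r_b^{2+\beta}|o|_{g_b}\lesssim t^2$. Since $|h|_{g_b}\lesssim r_b^{-2-\beta}\|h\|_{\cab}$ on $\mathrm{supp}\,\chi_t$, you also get $|(h,\bar o_t)_{L^2(\gtd)}|\lesssim t^{m-2}\|h\|_{\cab}\int_N r_b^{-2-\beta}|o|_{g_b}\,d\V_{g_b}\lesssim t^{m-2}\|h\|_{\cab}$. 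Hence the rank-one part of $\pi$ along $\bar o_t$ has norm $\lesssim t^{-m}\cdot t^{m-2}\cdot t^2=1$. This uses only $o=O(r_b^{-m})$ and $m>2+\beta$.
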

\begin{proof}
    We have $\Ric_{g_t}=\mu g_t$ on $M_0^\delta=\{r_t\geq 10\sqrt{t}\}$, where $\mu$ is the Einstein constant of $g_0$, and $|\Ric_{g_t}-\mu g_t|$ is bounded on the rest of $M$. So
    \[\|\Ric_{g_t}-\mu g_t\|_{\cab}\leq Ct^{4-\beta/2}.\]
    Hence we have the following approximations up to a positive power of $t$:
    \[\frac{1}{2} \left( \Ric_{\gtd} \circ s + s \circ \Ric_{\gtd} \right)-\frac{1}{m} \left( \frac{1}{ \V(\gtd)} \int_M \s_{\gtd} d\V_{\gtd} \right) s\sim \mu s-\frac{1}{m}m\mu s=0,\]
    \[\int_M\left\langle -\Ric_{\gtd} + \frac{\s_{\gtd}}{m} \gtd, s \right\rangle d\V_{g_t}\sim 0,\]
    \[\int_M\frac{m-2}{2m}\s_{g_t}(\tr_{g_t}s)d\V_{g_t}\sim \frac{m-2}{2m}m\mu\int_M(\tr_{g_t}s)d\V_{g_t}=0.\]
    So the conclusion holds.
\end{proof}
As a corollary of Proposition \ref{P-1} and Lemma \ref{3.5}, by triangle inequality we have
\begin{corollary}\label{linear-estimate}
    There exists a constant $0<\delta_0\ll1$ such that, for all $0<\delta<\delta_0$ and $t=\frac{\delta^2}{16}$, the restriction
    \[\pi^\perp\circ d\Phi_0:\ctab(g_t)\cap\TO(g_t)^\perp\to \cab\cap\TO(g_t)^\perp\]
    is invertible. Moreover, there exists $C>0$ such that
    \begin{equation}\label{equ3.10}
        \|s\|_{\ctab}\leq C\|\pi^\perp\circ d\Phi_0(s)\|_{\cab},\quad\forall s\in\ctab(g_t)\cap\TO(g_t)^\perp.
    \end{equation}
\end{corollary}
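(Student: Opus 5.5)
The plan is to view $\pi^\perp\circ d\Phi_0$ as a small perturbation of $\pi^\perp\circ P_{\gtd}$ on the subspace $\ctab(\gtd)\cap\TO(\gtd)^\perp$, and to combine Proposition \ref{P-1} with Lemma \ref{3.5} by a Neumann series argument.

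\textbf{Step 1: the a priori estimate.} Fix $s\in\ctab(\gtd)\cap\TO(\gtd)^\perp$. By \eqref{equ3.5},
\[
\|s\|_{\ctab}\le C_0\|\pi^\perp P_{\gtd}s\|_{\cab}.
\]
Split the right-hand side as
\[
\|\pi^\perp P_{\gtd}s\|_{\cab}\le \|\pi^\perp d\Phi_0 s\|_{\cab}+\|(\pi^\perp d\Phi_0-\pi^\perp P_{\gtd})s\|_{\cab}.
\]
By \eqref{equ3.3}, the last term is at most $\theta(t)\|s\|_{\ctab}$. Choose $\delta_0$ so small that $C_0\theta(t)\le\frac12$ whenever $t=\delta^2/16<\delta_0^2/16$. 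The $\theta(t)$-term can then be absorbed into the left-hand side, which yields \eqref{equ3.10} with $C=2C_0$.

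\textbf{Step 2: invertibility.} Set $L_0:=\pi^\perp P_{\gtd}$ and $L:=\pi^\perp d\Phi_0$, both regarded as maps
\[
\ctab\cap\TO^\perp\to\cab\cap\TO^\perp.
\]
Proposition \ref{P-1} says $L_0$ is invertible with $\|L_0^{-1}\|\le C_0$. We write
\[
L=L_0\bigl(1+L_0^{-1}(L-L_0)\bigr).
\]
By Lemma \ref{3.5} we have $\|L_0^{-1}(L-L_0)\|\le C_0\theta(t)\le\frac12$ as an operator on $\ctab\cap\TO^\perp$. The factor $1+L_0^{-1}(L-L_0)$ is therefore invertible by a Neumann series, and hence so is $L$.

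\textbf{Main obstacle: the target space.} The one point needing care is that $L$ really maps into $\cab\cap\TO(\gtd)^\perp$ and is bounded there. The projection $\pi^\perp$ must be bounded on $\cab$, which holds because $\TO(\gtd)$ is finite dimensional. Its basis elements lie in $\cab$, and the pairing $(h,\tilde o_i)_{L^2}$ is controlled by the weighted norm because the decay of $\tilde o_i$ makes the integral converge.

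This boundedness of $\pi^\perp$ is already implicit in the statements of Proposition \ref{P-1} and Lemma \ref{3.5}, so I will simply cite it. With that in hand, both parts of Corollary \ref{linear-estimate} follow.
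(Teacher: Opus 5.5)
Your proposal is correct and follows the paper's argument, which simply combines Proposition \ref{P-1} and Lemma \ref{3.5} via the triangle inequality and absorbs the $\theta(t)$ term once $\delta_0$ is small. Your Neumann-series step, writing $\pi^\perp d\Phi_0=\pi^\perp P_{\gtd}\bigl(1+(\pi^\perp P_{\gtd})^{-1}(\pi^\perp d\Phi_0-\pi^\perp P_{\gtd})\bigr)$, also makes explicit the surjectivity that the paper's one-line proof leaves implicit.
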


\subsection{Local moduli space around the glued manifold}
Now we give a description on local structure of $\M(m,\Lambda,D,V,E)$ near the glued manifold $(M,g_t)$, in the same spirit of {\cite[Theorem 0.9]{Koiso}}, and follows directly from our previous estimate in Corollary \ref{linear-estimate}. Since our main theorem \ref{Maintheorem} does not rely on this precise description, we omit the lengthy details, and only give a sketch of its proof. 
\begin{theorem}\label{local-moduli}
    There exist $\delta_0>0$ and $\varepsilon>0$, such that for any $0<t=\frac{\delta^2}{16}<\frac{\delta_0^2}{16}$ and for any $v\in\TO(g_t)$ with $\|v\|_{\ctab(g_t)}<\varepsilon$, there exists a unique $h_v\in\ctab(g_t)$ such that
    \begin{itemize}
        \item $\Phi_{g_t}(h_v)\in\TO(g_t)$;
        \item $\|h_v\|_{\ctab(g_t)}<2\varepsilon$;
        \item $h_v-v\in\TO(g_t)^\perp$.
    \end{itemize}
    
    As a corollary, the moduli space of Einstein metrics near $g_t$ is a subvariety of $\TO(g_t)$, which is in one-to-one correspondence with the zero set of
    \[v\mapsto h_v\mapsto\Phi_{g_t}(h_v).\]
\end{theorem}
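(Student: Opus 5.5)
We prove Theorem \ref{local-moduli} as a Lyapunov--Schmidt reduction: apply the quantitative implicit function theorem, Lemma \ref{IFT}, on the complement $\TO(g_t)^\perp$, with the linear estimate of Corollary \ref{linear-estimate}. Fix $v\in\TO(g_t)$ with $\|v\|_{\ctab}<\varepsilon$ and write $h=v+s$ with $s\in\ctab(g_t)\cap\TO(g_t)^\perp$. Define
\[F_v:\ctab(g_t)\cap\TO(g_t)^\perp\to\cab(g_t)\cap\TO(g_t)^\perp,\qquad F_v(s):=\pi^\perp\Phi_{g_t}(v+s).\]
The three conditions in the theorem then reduce to the single equation $F_v(s)=0$ with $\|s\|_{\ctab}<\varepsilon$. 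Indeed, $\Phi_{g_t}(h_v)\in\TO(g_t)$ is equivalent to $\pi^\perp\Phi_{g_t}(h_v)=0$, and $h_v-v=s$ lies in $\TO(g_t)^\perp$ by construction. The bound $\|h_v\|<2\varepsilon$ follows from the triangle inequality.

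To verify the hypotheses of Lemma \ref{IFT} we would take $x\mapsto F_v(x)$ and linearize at $0$. We have $dF_v|_0=\pi^\perp d\Phi_v$, where $d\Phi_v$ denotes the differential of $\Phi$ at $v$ and $d\Phi_0$ the one at $0$; this differs from $\pi^\perp d\Phi_0$ by an operator of norm $\le C\|v\|_{\ctab}\le C\varepsilon$. Combining this with Corollary \ref{linear-estimate} and a Neumann-series argument, $dF_v|_0$ is invertible on $\TO(g_t)^\perp$ with inverse bounded by $2C$, uniformly in $t$ once $\varepsilon$ is small. This gives hypothesis (2) of Lemma \ref{IFT}.

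For hypothesis (1), the quadratic estimate $\|Q(x)-Q(y)\|_{\cab}\le q\|x-y\|_{\ctab}(\|x\|+\|y\|)$ comes from the structure of $\Phi$. The operator $\Ric$ is a smooth function of $g$, $\partial g$ and $\partial^2 g$, and the weights are designed so that $\ctab\cdot\ctab\hookrightarrow\ctab$ and $\ctab\cdot\cab\hookrightarrow\cab$. The normalization term involving $\V(g)^{-1}\int\s_g$ is a smooth real-valued functional with uniformly bounded derivatives. The gauge term $\delta^*_{g_t}B_{g_t}$ is linear, so it does not contribute to $Q$.

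For hypothesis (3) we need $\|F_v(0)\|=\|\pi^\perp\Phi(v)\|_{\cab}$ small compared with $r/(2c)$. We would bound $\pi^\perp\Phi(v)$ by $\pi^\perp\Phi(0)+\pi^\perp d\Phi_0(v)+O(\varepsilon^2)$. The term $\Phi(0)$ is the gluing error of $g_t$, which tends to $0$ in $\cab$ as $t\to0$ (compare the proof of Lemma \ref{3.5}). The term $d\Phi_0(v)$ is small because $v$ is an approximate IED: $P_{g_t}$ annihilates $o_i$ and $O_j$ on their respective pieces, so the error is concentrated where the cutoffs vary, and the rescaling direction $g_t$ is almost in the kernel. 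Choosing $r=\varepsilon$ and then $\delta_0$ small, Lemma \ref{IFT} gives a unique $s$, and hence the unique $h_v$.

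For the corollary, take any Einstein metric near $g_t$. By Proposition \ref{Slice-theorem} we may assume it is in Bianchi gauge, so it is of the form $g_t+h$ with $\Phi(h)=0$; note that when $\mu<0$, an Einstein metric in Bianchi gauge satisfies $\delta^*B=0$ exactly. Setting $v=\pi h$, the uniqueness part forces $h=h_v$, and then $\Phi(h_v)=0$. Conversely, every zero of $v\mapsto\Phi(h_v)$ gives an Einstein metric $g_t+h_v$.

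The main obstacle is the uniform quadratic (Lipschitz) estimate in the decoupling norms. The constant-tensor components $h_A$ on the neck are not weighted, so one must check that products and the curvature expressions of $g_t+h$ remain controlled by these components with $t$-independent constants. I would handle this by treating $g_t+\chi_{A(t,\delta)}h_A$ as a metric that is exactly flat up to a linear change of coordinates on the neck.
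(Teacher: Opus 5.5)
Your proposal follows the paper's own argument: Lemma \ref{IFT} is applied to $\pi^\perp\circ\Phi_{g_t}$ on $\TO(g_t)^\perp$ with the inverse bound of Corollary \ref{linear-estimate}, and the base point is then shifted by $v$ (your $F_v(s)=\pi^\perp\Phi_{g_t}(v+s)$ is the precise form of the paper's ``replace $g_t$ by $g_t+v$''); you also make explicit the smallness of $\Phi_{g_t}(0)$ and of $\pi^\perp d\Phi_0$ on $\TO(g_t)$, which the paper leaves to ``direct calculations''. Two minor bookkeeping points: since $\|h_v\|<2\varepsilon$ only gives $\|s\|<3\varepsilon$, the uniqueness claim needs the fixed-point ball to have radius $3\varepsilon$ (the a priori bound $\|s\|\le 2c\|F_v(0)\|$ from the contraction still yields $\|s\|<\varepsilon$); and in the corollary $\mu<0$ is needed for the converse direction, not the forward one, where Bianchi gauge gives $\delta^*_{g_t}B_{g_t}(g)=0$ trivially (for the converse, a zero of $\Phi_{g_t}$ satisfies $B_{g_t}(g)=0$ by applying $B_g$, using the contracted Bianchi identity and injectivity of $B_g\delta^*_{g_t}$ near the invertible operator $B_{g_t}\delta^*_{g_t}$ of Proposition \ref{prop-gauge-linear}).
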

\begin{proof}[Sketch of proof]
    We first prove the theorem for $v=0$. Consider the restriction
    \[\pi^\perp\circ \Phi_{g_t}:\ctab(g_t)\cap\TO(g_t)^\perp\to \cab\cap\TO(g_t)^\perp.\]
    By Corollary \ref{linear-estimate}, its differential at $0$ is invertible, with bounded inverse. To apply the implicit function theorem (Lemma \ref{IFT}), we need the estimates for $\pi^\perp\circ \Phi_{g_t}(0)$ and the quadratic term $Q$, which follow from direct calculations when $t$ is small enough.

    For general $v$ with $\|v\|_{\ctab(g_t)}<\varepsilon$, we replace $g_t$ by $g_t+v$, and solve $\pi^\perp\circ\Phi_{g_t+v}(h)=0$ using Lemma \ref{IFT}.
\end{proof}

\section{Refined gluing}
In this section, we modify the metrics on both Ricci-flat ALE manifold and compact orbifold, to get a refined gluing. The refinement is done in two aspects:
\begin{itemize}
    \item As illustrated by Theorem \ref{local-moduli}, for each Einstein metric $g$ near $g_t$, there exists a "projection of $g$ on $\TO(g_t)$", that is, $v\in\TO(g_t)$ such that $g-(g_t+v)\perp\TO(g_t)$. Hence, in order to get a better approximation of $g$, we would first deform the compact orbifold and the blow-up limit, then glue the deformed metrics together. It is worth noting that Einstein deformations are often obstructed, so we only get obstructed Einstein metrics on both parts.
    \item Motivated by "find back the Ricci curvature on blow-up limits", we solve a linearized Ricci equation on Ricci-flat ALE manifolds. Again, this equation does not have a solution in general, but we can formulate its obstruction explicitly.
\end{itemize}

\subsection{Einstein deformation on blow-up limits and compact orbifolds}
First, we give a version of implicit function theorem, when the linearized operator is neither injective nor surjective. To set up the situation, we consider Banach spaces $X,Y$, an open neighborhood $U$ of $0\in X$ and a smooth Fredholm map $F:U\to X$. In particular, $dF_0:X\to Y$ is a Fredholm operator. Now we decompose $X,Y$ as
\[X=X_1\oplus X_2,\quad Y=Y_1\oplus Y_2,\]
where $X_2=\ker(dF_0)$, $Y_1=\mathrm{Im}(dF_0)$ and $Y_2\cong\mathrm{coker}(dF_0)$. Hence the restriction
\[dF_0:X_1\to Y_1\]
is an isomorphism. Since $X_2,Y_2$ are finite dimensional, there are projections
\[\Pi_{X_i}:X\to X_i,\quad \Pi_{Y_i}:Y\to Y_i,\quad (i=1,2).\]
\begin{lemma}\label{LSreduction}
    Let $X,Y,F$ satisfy the assumptions above, and let $Q := F - F(0) - dF_0$. Suppose there are constants $q, r > 0$, $c\geq 1$ such that:
    \begin{enumerate}
        \item[(1)] $\|Q(x) - Q(y)\| \leq q\|x - y\|(\|x\| + \|y\|)$, $\forall x, y \in B(0, r) \subset X$;
        \item[(2)] $\|x\|\leq c\|dF_0(x)\|$, $\forall x\in X_1$;
        \item[(3)] $r < \min\left\{1, \frac{1}{2qc}\right\}$ and $\|F(0)\| \leq \frac{r}{4c}$.
    \end{enumerate}
    Then for each $v\in X_2$, $\|v\|\leq \frac{r}{4c}$, there exists a unique $x_v\in X$ such that
    \begin{itemize}
        \item $F(x_v)\in Y_2$,
        \item $x_v-v\in X_1$,
        \item $\|x_v\|_X+\|F(x_v)\|_Y<r$.
    \end{itemize}
    Moreover, the differential of the map $v\mapsto x_v$ at $0$ is the inclusion map $X_2\hookrightarrow X$.
\end{lemma}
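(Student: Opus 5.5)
We run a Lyapunov--Schmidt reduction on the fixed-point formulation of the proof of Lemma \ref{IFT}, working on the complement $X_1$ and treating $v\in X_2$ as a parameter. Let $L:=dF_0|_{X_1}:X_1\to Y_1$; by (2) it is an isomorphism onto $Y_1=\mathrm{Im}(dF_0)$ with $\|L^{-1}\|\le c$. We look for $x_v=v+y$ with $y\in X_1$ and require $\Pi_{Y_1}F(v+y)=0$. Since $dF_0(v)=0$, this reads
\[
0=\Pi_{Y_1}\bigl(F(0)+Q(v+y)\bigr)+L y ,
\]
that is, $y=G_v(y):=-L^{-1}\Pi_{Y_1}\bigl(F(0)+Q(v+y)\bigr)$. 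Any fixed point gives $F(x_v)=\Pi_{Y_2}F(x_v)\in Y_2$ and $x_v-v=y\in X_1$.

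Next we check that $G_v$ is a contraction on a ball $\{y\in X_1:\|y\|\le \rho\}$ with $\rho\approx r/2$. Since $Q(0)=0$, condition (1) gives $\|Q(v+y)\|\le q\|v+y\|^2$ and $\|Q(v+y)-Q(v+y')\|\le q\|y-y'\|(\|v+y\|+\|v+y'\|)\le 2qr\|y-y'\|$ whenever $\|v\|+\rho\le r$. Using (3), $2qrc<1$, so the Lipschitz constant of $G_v$ is below $1$ (after the projection constant is absorbed). For self-mapping,
\[
\|G_v(y)\|\le c\|\Pi_{Y_1}\|\bigl(\tfrac{r}{4c}+q r^2\bigr)\le \|\Pi_{Y_1}\|\bigl(\tfrac r4+\tfrac r2\cdot\tfrac{r}{1}\cdot qc\bigr),
\]
which should fit within $\rho$ using $qrc<1/2$ and $r<1$. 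Banach's fixed point theorem then gives existence and uniqueness of $y$, and hence of $x_v$.

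To get the bound $\|x_v\|+\|F(x_v)\|<r$, we estimate $\|F(x_v)\|=\|\Pi_{Y_2}(F(0)+Q(x_v))\|$ by the same two pieces, and $\|x_v\|\le \|v\|+\|y\|$. Uniqueness in the class described, i.e.\ among $x$ with $x-v\in X_1$, $F(x)\in Y_2$ and $\|x\|<r$, holds because any such $x$ gives a fixed point of $G_v$ in the ball. For the differential statement, we differentiate the fixed-point identity in $v$ at $v=0$. This gives $dy=-L^{-1}\Pi_{Y_1}\,dQ_{x_0}(\cdot+dy)$, and $\|dQ_{x_0}\|\le 2q\|x_0\|$ with $x_0=y_0$ small. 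Here I am uncertain: the claim that $d(v\mapsto x_v)_0$ is exactly the inclusion seems to need $F(0)$ small enough that $dQ_{x_0}$ vanishes, or else the statement is meant up to an error of size $O(\|F(0)\|)$. I would first check whether $x_0=0$ (when $F(0)\in Y_2$), in which case $dQ_0=0$ and $dy=0$ exactly. Smoothness of $v\mapsto y$ follows from the implicit function theorem applied to $(v,y)\mapsto y-G_v(y)$, whose $y$-derivative is invertible because of the contraction.

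The main obstacle is the constant bookkeeping. The projection norms $\|\Pi_{Y_i}\|$ are not controlled by the hypotheses, so I expect to normalize them to $1$ (e.g.\ via the $L^2$-orthogonal splitting used in the applications) and then fit everything into $r$ using only $r/4c$, $2qc r<1$ and $c\ge1$.
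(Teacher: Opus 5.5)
Your reduction is the same contraction the paper runs, organized without the auxiliary variable. Both of your worries are justified, but the fix you propose for the constants does not work.

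\textbf{Existence, uniqueness and the third bullet.} The paper sets $H(x,w)=(F(x)-w,\Pi_{X_2}(x-v))$ from $X\oplus Y_2$ to $Y\oplus X_2$, notes $dH_0(x_1,x_2,w)=(dF_0x_1,-w,x_2)$, and invokes Lemma \ref{IFT}. The resulting fixed-point map is $(x,w)\mapsto (v-L^{-1}\Pi_{Y_1}(F(0)+Q(x)),\ \Pi_{Y_2}(F(0)+Q(x)))$. Its $X$-component at $x=v+y$ is exactly your $v+G_v(y)$. What this packaging buys is the third bullet: the solution lies in the radius-$r$ ball of $X\oplus Y_2$ with the sum norm, so $\|x_v\|+\|F(x_v)\|<r$ comes for free.

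Your plan bounds $\|v\|+\|y\|$ and $\|\Pi_{Y_2}(F(0)+Q(x_v))\|$ separately. That only gives $\|v\|+(c+1)(\|F(0)\|+q\|x_v\|^2)$, which need not be $<r$. Normalizing $\|\Pi_{Y_i}\|=1$ does not rescue it. Take $X=\mathbb{R}^2$ with the $\ell^1$ norm, $Y=\mathbb{R}^2$ with the $\ell^\infty$ norm, and coordinate splittings. Let $F(y,s)=(-\tfrac r4+y-q(y+s)^2,\ \tfrac r4+q(y+s)^2)$ with $c=1$, $r=\tfrac12$, $q=0.99$, $v=(0,\tfrac r4)$. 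All hypotheses hold. The only two $x$ with $F(x)\in Y_2$ and $x-v\in X_1$ have, respectively, $\|x\|\approx 0.56>r$ and $\|x\|+\|F(x)\|\approx 0.78>r$. So the lemma as stated is false.

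The normalization that makes it true, and that the paper uses silently when it says Lemma \ref{IFT} applies to $H$, is $\|(dH_0)^{-1}\|\le c$ for the sum norms. Equivalently, $\|L^{-1}\Pi_{Y_1}y+x_2\|+\|\Pi_{Y_2}y\|\le c(\|y\|+\|x_2\|)$.

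With this normalization your route closes, after two fixes:
\begin{itemize}
\item Contract on $\{y\in X_1:\|v+y\|\le r\}$ rather than on $\|y\|\le\rho\approx\tfrac r2$. That radius need not self-map, and your displayed inequality is off by a factor $2$; the correct bound is $cqr^2=r\cdot cqr<\tfrac r2$. This set also makes uniqueness among $\|x\|<r$ immediate.
\item Estimate the pair jointly via the identity $(x_v,F(x_v))=(dH_0)^{-1}(-F(0)-Q(x_v),\,v)$. This gives $\|x_v\|+\|F(x_v)\|\le c(\|v\|+\|F(0)\|+q\|x_v\|^2)\le\tfrac r2+cqr^2<r$.
\end{itemize}

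\textbf{The differential.} Your suspicion is correct, and the paper's proof has the same blind spot. It linearizes $\tilde H(x,w,v)=(F(x)-w,\Pi_{X_2}x-v)$ at the origin. The origin is a zero of $\tilde H$ only when $F(0)=0$; since $\tilde H$ is affine in $w$, one may instead use $(0,F(0),0)$ when $F(0)\in Y_2$. Otherwise the claim is false. On $\mathbb{R}^2$, $F(x_1,x_2)=(a+x_1+\kappa x_1x_2,\,0)$ gives $x_v=(-a/(1+\kappa s),\,s)$ for $v=(0,s)$, whose derivative at $s=0$ is $(a\kappa,1)$, not the inclusion.

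In both applications, Propositions \ref{bdeform} and \ref{0deform}, one has $\Psi(0)=0$ and $\Psi'(0)=0$. There your resolution is exactly what is needed: $x_0=0$ by uniqueness, $dQ_0=0$ from $\|Q(x)\|\le q\|x\|^2$, hence $dy=0$, together with the implicit function theorem for $(v,y)\mapsto y-G_v(y)$.
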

\begin{proof}
    For any $v\in X_2\cap B\left(0,\frac{r}{4c}\right)$, we define a new map
    \[H:X\oplus Y_2\to Y\oplus X_2,\quad H(x,w):=(F(x)-w,\Pi_{X_2}(x-v)).\]
    Then $H(x,w)=0$ if and only if $F(x)=w\in Y_2$ and $x-v\in X_1$.
    
    In view of decompositions of $X$ and $Y$, we write the differential of $H$ as
    \[dH_0:X_1\oplus X_2\oplus Y_2\to Y_1\oplus Y_2\oplus X_2,\]
    and $dH_0(x_1,x_2,w)=(dF_0(x_1+x_2),-w,\Pi_{X_2}(x_1+x_2))=(dF_0(x_1),-w,x_2)$. Also, $\|H(0)\|=\|(F(0),-v)\|=\|F(0)\|+\|v\|\leq\frac{r}{2c}$. So Lemma \ref{IFT} applies to the map $H$, and the conclusion for $x_v$ holds.

    For the "moreover" part, we compute the differential of $v\mapsto x_v$ by implicit function theorem. We define 
    \[\tilde{H}:X\oplus Y_2\oplus X_2\to Y\oplus X_2,\quad \tilde{H}(x,w,v):=(F(x)-w,\Pi_{X_2}(x)-v).\]
    The arguments above shows that for each small $v$, there exists a unique solution $(x_v,w_v)$ near 0, such that $\tilde{H}(x_v,w_v,v)=0$. Now we compute the partial differentials
    \[\left.\frac{\partial\tilde{H}}{\partial (x,w)}\right|_{0}:X\oplus Y_2\to Y\oplus X_2,\]
    which equals $dH_0$, and
    \[\left.\frac{\partial\tilde{H}}{\partial v}\right|_{0}:X_2\to Y\oplus X_2,\quad v\mapsto(0,-v).\]
    Hence
    \[\left.\frac{d(x_v,w_v)}{d v}\right|_{0}=-\left(\left.\frac{\partial\tilde{H}}{\partial (x,w)}\right|_{0}\right)^{-1}\circ\left.\frac{\partial\tilde{H}}{\partial v}\right|_{0}:X_2\to X\oplus Y_2,\quad v\mapsto(v,0).\]
    In particular, $\left.\frac{dx_v}{d v}\right|_{0}:X_2\to X$ is the inclusion.
\end{proof}
\begin{remark}
    The lemma above and Theorem \ref{local-moduli} address similar problems, both concerning the implicit function theorem for Fredholm maps. However, the situation of Theorem \ref{local-moduli} is more subtle, since $\TO(g_t)$ is not the exact kernel of $d\Phi_0$. By contrast, the advantage of Lemma \ref{LSreduction} lies in its "moreover" part, which calculates the differential of $v\mapsto x_v$. This will be used later in Lemma \ref{lem4.12}.
\end{remark}
Second, we describe obstructed Einstein deformations on Ricci-flat ALE manifolds and compact Einstein orbifolds. We denote $C^{2,\alpha}_\beta(g_b)$ and $C^{2,\alpha}(g_0)$ as symmetric $(0,2)-$tensor fields with finite (weighted) H\"older norms.

For ALE Ricci-flat manifold $(N,g_b)$, we define
\[\Psi:C^{2,\alpha}_\beta(g_b)\to C^{0,\alpha}_{\beta+2}(g_b), h\mapsto \Ric(g_b+h)+\delta_{g_b}^*B_{g_b}(g_b+h).\]
In fact, $\Psi$ is defined only on a neighbourhood of 0, but this will not cause any problems.
\begin{proposition}\label{bdeform}
    There exist $\eta,\theta>0$ such that, for any $v\in\ker(d\Psi(0))$, $\|v\|\leq\eta$, there exists a unique $h_{v}\in C^{2,\alpha}_\beta(g_b)$ satisfying
    \begin{itemize}
        \item[(i)] $\Psi(h_{v})\in\coker(d\Psi_0)$;
        \item[(ii)] $h_{v}-v$ is $L^2(g_b)-$orthogonal to $\ker(d\Psi_0)$;
        \item[(iii)] $\|h_{v}\|+\|v\|<\theta$.
    \end{itemize}
    Moreover, the map $v\mapsto h_v$ is $C^1$. In particular, as $\|v\|_{C^{2,\alpha}_\beta(g_b)}\to0$, we have $\|h_v\|_{C^{2,\alpha}_\beta(g_b)}\to 0$.
\end{proposition}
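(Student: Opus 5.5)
We will deduce Proposition \ref{bdeform} from Lemma \ref{LSreduction} with $X=C^{2,\alpha}_\beta(g_b)$, $Y=C^{0,\alpha}_{\beta+2}(g_b)$ and $F=\Psi$. The first step is to identify $d\Psi_0$. Since $\Ric(g_b)=0$ and $B_{g_b}(g_b)=0$, we have $\Psi(0)=0$. Using the formula for $d\Ric_{g_b}$ recalled before Notation \ref{P_g}, the gauge term cancels:
\[d\Psi_0(h)=d\Ric_{g_b}(h)+\delta^*_{g_b}B_{g_b}h=P_{g_b}h.\]
Because $P_{g_b}$ is elliptic and asymptotic to $\tfrac12\nabla^*\nabla$ on the Euclidean end, it is Fredholm from $C^{2,\alpha}_\beta$ to $C^{0,\alpha}_{\beta+2}$ for the non-exceptional weight $0<\beta<1$. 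This follows from the standard weighted theory, exactly as in Lemma \ref{Fredholm-on-blow-up limit}; the relevant indicial roots of the Euclidean Laplacian are $0$ and $2-m$, and $\beta$ lies strictly between them. The kernel is $X_2=\ker P_{g_b}\cap C^{2,\alpha}_\beta=\SO(g_b)$, since a decaying kernel element lies in $\SO(g_b)$ by Definition \ref{bIED}. By Proposition \ref{IEDb}(3), these elements decay like $r^{-m}$, so they are $L^2$. Since $P_{g_b}$ is formally self-adjoint, the cokernel is the $L^2$-annihilator of the image. Using the duality of weights $\beta\leftrightarrow m-2-\beta$, and the fact that the dual kernel elements again decay like $r^{-m}$ by Proposition \ref{IEDb}, the cokernel is represented by $Y_2=\SO(g_b)\subset C^{0,\alpha}_{\beta+2}$. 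We then set $X_1$ and $Y_1$ to be the $L^2(g_b)$-orthogonal complements of $\SO(g_b)$. With these choices, the condition $h_v-v\perp\ker$ is precisely $h_v-v\in X_1$, as required in (ii).

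The second step is to verify the hypotheses of Lemma \ref{LSreduction}. Hypothesis (2) is the bounded-inverse estimate $\|x\|\le c\|P_{g_b}x\|$ on $X_1$. We will prove it by contradiction: take a normalized sequence with $P_{g_b}x_k\to0$, use weighted Schauder estimates and the invertibility of the Euclidean model at weight $\beta$ (Lemma \ref{Fredholm-on-blow-up limit}(1), applied componentwise at infinity) to prevent mass escaping to infinity, and obtain a nonzero limit in $\ker P_{g_b}\cap X_1=0$, a contradiction. Hypothesis (1) is the quadratic estimate for $Q=\Psi-d\Psi_0$. The term $\Ric(g_b+h)$ is a smooth function of $g_b^{-1}$, $h$, $\nabla h$ and $\nabla^2 h$ with quadratic remainder. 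The gauge term $\delta^*_{g_b}B_{g_b}(g_b+h)$ is linear in $h$, so it contributes nothing to $Q$. Hence $Q$ consists of expressions of the form $h*\nabla^2h$, $\nabla h*\nabla h$ and $h*h*\mathrm{Rm}$, each with coefficients bounded for $\|h\|_{C^0}$ small. Counting weights, $r^{-\beta}\cdot r^{-\beta-2}=r^{-2\beta-2}\lesssim r^{-\beta-2}$ because $r_b\ge1$. This gives $\|Q(x)-Q(y)\|\le q\|x-y\|(\|x\|+\|y\|)$ on a small ball, with the Hölder seminorms handled in the same way. Hypothesis (3) is immediate since $F(0)=0$.

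Lemma \ref{LSreduction} then yields, for $\|v\|\le r/(4c)=:\eta$, a unique $h_v$ satisfying (i)--(iii) with $\theta=r$, and the map $v\mapsto h_v$ is obtained as the implicit-function solution of $H(x,w)=0$. Because $\Psi$ is smooth, the implicit function theorem applied to $\tilde H$ in the proof of Lemma \ref{LSreduction} gives that $v\mapsto(h_v,w_v)$ is $C^1$. Its derivative at $0$ is the inclusion, so $\|h_v\|\to0$ as $v\to0$.

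The main obstacle is the Fredholm/cokernel identification, in particular showing that the cokernel in the weighted Hölder setting is exactly $\SO(g_b)$ and that projecting onto it is bounded on $C^{0,\alpha}_{\beta+2}$. For the projection, we use that elements of $\SO(g_b)$ are $O(r^{-m})$, so pairing with $C^{0,\alpha}_{\beta+2}$ tensors converges absolutely.
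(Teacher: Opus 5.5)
Your proposal is correct and follows the paper's argument: the paper's proof is the one-line application of Lemma \ref{LSreduction} with $F=\Psi$, $X=C^{2,\alpha}_\beta(g_b)$, $X_1=\ker(d\Psi_0)^\perp$, $Y=C^{0,\alpha}_{\beta+2}(g_b)$. You supply the verifications it leaves implicit: $d\Psi_0=P_{g_b}$, kernel and cokernel equal to $\SO(g_b)$ via weight duality, boundedness of the $L^2$-projection, coercivity on $X_1$ and the quadratic estimate. These work for every $\beta\in(0,m-2)$, which is the range Remark \ref{rmk4.5} needs.

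The one slip is bookkeeping. The lemma controls $\|h_v\|+\|\Psi(h_v)\|<r$, not $\|h_v\|+\|v\|$, so $\theta=r$ is not literally right. You should shrink $\theta$ (for uniqueness) and then $\eta$ (for existence). The paper's statement shares this imprecision.
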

\begin{proof}
    Directly apply Lemma \ref{LSreduction} with $F=\Psi$, $X=C^{2,\alpha}_\beta(g_b)$, $X_1=\ker(d\Psi_0)^\perp$, $Y=C^{0,\alpha}_{\beta+2}(g_b)$.
\end{proof}
\begin{remark}\label{rmk4.5}
    One can compute $d\Psi_0=P_{g_b}$, hence 
    \[\coker(d\Psi_0)=\ker(d\Psi_0)=\SO(g_b).\]
    Since there is no exceptional value in $(2-m,0)$, in this lemma, we can take $\beta=m-2-\varepsilon$ for any $0<\varepsilon<1$. 
\end{remark}

For compact Einstein orbifold $(M_0,g_0)$, we define
\[\Psi':C^{2,\alpha}(g_0)\to C^{0,\alpha}(g_0), h\mapsto \Ric(g) - \left( \frac{1}{m \V(g)} \int_M \s_g d\V_{g} \right) g + \delta_{g_0}^* B_{g_0} (g),\]
where $g=g_0+h$. In fact, $\Psi'$ is defined only on a neighbourhood of 0, but this will not cause any problems.
\begin{proposition}\label{0deform}
    There exist $\eta,\theta>0$ such that, for any $w\in\ker(d\Psi'(0))$, $\|w\|\leq\eta$, there exists a unique $h_{w}\in C^{2,\alpha}(g_0)$ satisfying
    \begin{itemize}
        \item[(i)] $\Psi'(h_{w})\in\coker(d\Psi'_0)$;
        \item[(ii)] $h_{w}-w$ is $L^2(g_b)-$orthogonal to $\ker(d\Psi'_0)$;
        \item[(iii)] $\|h_{w}\|+\|w\|<\theta$.
    \end{itemize}
\end{proposition}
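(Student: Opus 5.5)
The plan is to apply Lemma \ref{LSreduction} directly, exactly as in the proof of Proposition \ref{bdeform}. We take
\[
F=\Psi',\quad X=C^{2,\alpha}(g_0),\quad Y=C^{0,\alpha}(g_0),\quad X_2=\ker(d\Psi'_0),\quad X_1=X_2^{\perp_{L^2(g_0)}}\cap X,
\]
with $Y_1=\mathrm{Im}(d\Psi'_0)$ and $Y_2$ a complement identified with $\coker(d\Psi'_0)$. The orthogonality in (ii) should be read with respect to $L^2(g_0)$; the $L^2(g_b)$ in the statement is presumably a typo. The three hypotheses of Lemma \ref{LSreduction} then need to be checked.

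\textbf{Step 1: Fredholmness and structure of $d\Psi'_0$.} I will compute $d\Psi'_0$ using the formula for $d\Phi_0$ in Section 5.1, with $g_t$ replaced by $g_0$. Since $\Ric_{g_0}=\mu g_0$ and $\s_{g_0}=m\mu$, the zeroth-order terms cancel and
\[
d\Psi'_0(s)=P_{g_0}s+c\,\Big(\int_{M_0}\tr_{g_0}s\,d\V_{g_0}\Big)g_0
\]
for some constant $c=c(m,\mu,\V)$. The gauge term $\delta^*_{g_0}B_{g_0}$ cancels the $-\delta^*B$ part of $d\Ric$. This operator is $P_{g_0}$ plus a finite-rank perturbation. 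Since $P_{g_0}$ is elliptic on the compact orbifold $M_0$, standard orbifold Schauder theory (work in $\Gamma$-invariant local charts) makes $P_{g_0}:C^{2,\alpha}\to C^{0,\alpha}$ Fredholm of index $0$, hence so is $d\Psi'_0$. Its kernel is finite dimensional; by Notation \ref{IED0} it is $\SO(g_0)$, possibly together with $g_0$ depending on $c$, but the precise identification is irrelevant. The operator is $L^2$-formally self-adjoint, since the finite-rank part is $s\mapsto c\langle s,1\rangle g_0$-type and symmetric. So $\coker\cong\ker$, and the projections $\Pi_{X_i},\Pi_{Y_i}$ are bounded $L^2$-projections onto finite-dimensional spaces of smooth tensors.

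\textbf{Step 2: the estimates.} Hypothesis (2), $\|x\|\le c\|d\Psi'_0x\|$ on $X_1$, follows from the Schauder estimate
\[
\|x\|_{C^{2,\alpha}}\le C\big(\|d\Psi'_0x\|_{C^{0,\alpha}}+\|x\|_{C^0}\big)
\]
combined with a standard contradiction/compactness argument (Arzel\`a--Ascoli) that removes the $C^0$ term on the complement of the kernel. This step is the main, though routine, obstacle. Hypothesis (1) holds because $\Psi'$ is a smooth nonlinear map built from $g$, $g^{-1}$ and up to two derivatives of $g$, together with the smooth functionals $\V(g)$ and $\int\s_g$. Hence for $\|h\|_{C^{2,\alpha}}\le r_0$ small, $Q$ is quadratic with a uniform constant $q$. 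For (3), $F(0)=\Psi'(0)=0$ since $g_0$ is Einstein with $\s\equiv m\mu$ and $B_{g_0}g_0=0$. So we may choose any $r<\min\{1,r_0,1/(2qc)\}$, set $\eta=r/(4c)$ and $\theta=r$.

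Lemma \ref{LSreduction} then gives, for each $w\in X_2$ with $\|w\|\le\eta$, a unique $h_w$ with $\Psi'(h_w)\in Y_2=\coker(d\Psi'_0)$, $h_w-w\in X_1$ (which is (ii)), and $\|h_w\|+\|\Psi'(h_w)\|<r$. The conclusion of Lemma \ref{LSreduction} bounds $\|h_w\|+\|\Psi'(h_w)\|$ rather than $\|h_w\|+\|w\|$. To obtain (iii) as stated, shrink $\eta$ so that $\|w\|\le\eta\le r/2$ and use $\|h_w\|<r/2$ after halving $r$ in the construction. Uniqueness within the ball follows from the uniqueness in Lemma \ref{IFT}.
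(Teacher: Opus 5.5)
Your proposal is correct and matches the paper's proof: the paper simply applies Lemma \ref{LSreduction} with $F=\Psi'$, $X=C^{2,\alpha}(g_0)$, $X_1=\ker(d\Psi'_0)^\perp$, $Y=C^{0,\alpha}(g_0)$. Your checks supply the verifications the paper leaves implicit: Fredholmness and $L^2$-self-adjointness of $d\Psi'_0(s)=P_{g_0}s+\frac{\mu}{m\V(g_0)}\big(\int_{M_0}\tr_{g_0}s\,d\V_{g_0}\big)g_0$, the Schauder-plus-compactness estimate on $X_1$, the quadratic bound on $Q$, and $\Psi'(0)=0$. You are also right that (ii) should read $L^2(g_0)$, and that the lemma naturally yields $\|h_w\|+\|\Psi'(h_w)\|<r$ rather than (iii) as written; this is a harmless bookkeeping discrepancy that the paper does not address.
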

\begin{proof}
    Directly apply Lemma \ref{LSreduction} with $F=\Psi'$, $X=C^{2,\alpha}(g_0)$, $X_1=\ker(d\Psi'_0)^\perp$, $Y=C^{0,\alpha}(g_0)$.
\end{proof}
\begin{remark}
    Since $\Ric(g_0)=\mu g_0$, one can compute that
    \[d\Psi'_0(s)=P_{g_0}s+\frac{\mu}{m\V(g_0)}\left(\int_{M_0}\tr_{g_0}sd\V_{g_0}\right)g_0.\]
    Recall we defined $\SO(g_0)=\ker(P_{g_0})\cap T_{g_0}\mathscr{M}_1$ in section 4.2, so 
    \[\coker(d\Psi'_0)=\ker(d\Psi'_0)=\SO(g_0)\oplus\R\langle g_0\rangle.\]
\end{remark}

\subsection{Linearized Ricci equation on blow-up limits}\label{sec4.3}
Suppose $\Ric(g_0)=\mu g_0$. In this subsection, we wish to find a symmetric $(0,2)-$tensor field $h_2$ on $(N,g_b)$ such that
\[\begin{cases}
    \Ric(g_b+t^2h_2)=t^2\mu(g_b+t^2h_2)\text{ for each small }t,\\
    B_{g_b}h_2=0.
\end{cases}\]
Taking derivative with respect to $s=t^2$, at $s=0$ we must have
\[\begin{cases}
    d\Ric_{g_b}h_2=\mu g_b\\
    B_{g_b}h_2=0
\end{cases}\Longleftrightarrow\begin{cases}
    P_{g_b}h_2=\mu g_b\\
    B_{g_b}h_2=0
\end{cases}.\]

First of all, we find a candidate for the asymptotic behaviour of $h_2$. 
\begin{lemma}\label{lemmaH}
    We assume that in geodesic coordinates around the singular point \( p \in (M_0,g_0) \), the metric is expanded as
    \[g_0=g_E- \frac{1}{3} R_{ikjl}(0) x^k x^ldx^idx^j+O(|x|^3).\]
    There exists another coordinate system around $p$, such that we have an expansion
    \[
    g_0 = g_E + H + O(|x|^3),
    \]
    where
    \begin{equation}\label{H}
        H_{ij} := H\left(\frac{\partial}{\partial x^i},\frac{\partial}{\partial x^j}\right)=- \frac{1}{3} R_{ikjl}(0) x^k x^l - \frac{2\mu}{3(m+2)}\left(|x|^2 \delta_{ij} + 2x^i x^j\right).
    \end{equation}

    In particular, \( H \) satisfies
    \begin{itemize}
        \item[(i)] \( H_{ij} \) is a homogeneous polynomial of degree 2;
        \item[(ii)] \( B_{g_E}(H) = 0 \);
        \item[(iii)] \(\frac{1}{2}\nabla^*_{g_E}\nabla_{g_E} H = \mu g_{E} \).
    \end{itemize}
\end{lemma}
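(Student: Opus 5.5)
The plan is to change coordinates by a cubic correction of the geodesic chart and then check (i)--(iii) by direct computation in flat space. A change of coordinates $x=y+P(y)$, with $P$ a vector field whose components are homogeneous cubic polynomials, changes the metric components at quadratic order only by the Euclidean Lie derivative:
\[
g_0=g_E-\tfrac13R_{ikjl}(0)y^ky^l\,dy^idy^j+\mathscr{L}_Pg_E+O(|y|^3),\qquad (\mathscr{L}_Pg_E)_{ij}=\partial_iP_j+\partial_jP_i .
\]
All other contributions are of order at least $|y|^4$, because $P=O(|y|^3)$ and the geodesic error is already $O(|y|^3)$; the map is a diffeomorphism near $0$ by the inverse function theorem.

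So it suffices to realize the extra term in \eqref{H} as such a Lie derivative. Take $P_j=c|y|^2y^j$. Then $\partial_iP_j=c(2y^iy^j+|y|^2\delta_{ij})$, which is symmetric in $i,j$, and hence $\mathscr{L}_Pg_E=2c(|y|^2\delta_{ij}+2y^iy^j)$. Choosing $c=-\frac{\mu}{3(m+2)}$ produces exactly $-\frac{2\mu}{3(m+2)}(|x|^2\delta_{ij}+2x^ix^j)$. This proves the expansion, and (i) is then immediate. We should also note that $P$ commutes with $\Gamma\subset\mathrm{O}(m)$, since $|y|^2y$ is $\mathrm{O}(m)$-equivariant, so the new chart still descends to $\R^m/\Gamma$.

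For (ii) and (iii), split $H=H^{geo}+2\delta^*_{g_E}P$, where $H^{geo}_{ij}=-\frac13R_{ikjl}(0)x^kx^l$; the factor $2$ is there because $\delta^*$ is half the Lie derivative. On flat space, $B_{g_E}\delta^*_{g_E}=\frac12\nabla^*\nabla$ on vector fields, and $\nabla^*\nabla$ commutes with $\delta^*$. Since $\Delta(|x|^2x^j)=(2m+4)x^j$, this gives:
\begin{itemize}
\item $B_{g_E}(2\delta^*P)=\nabla^*\nabla P=-c(2m+4)x=\frac{2\mu}{3}x$;
\item $\frac12\nabla^*\nabla(2\delta^*P)=\delta^*(\nabla^*\nabla P)=\frac{2\mu}{3}\delta^*x=\frac{2\mu}{3}g_E$.
\end{itemize}
For the geodesic part we use the curvature symmetries and $\Ric_{g_0}(0)=\mu g_0(0)$, that is, $R_{kikl}(0)=\mu\delta_{il}$ in the paper's convention:
\begin{itemize}
\item $\mathrm{tr}\,H^{geo}=-\frac{\mu}{3}|x|^2$, so $\frac12d\,\mathrm{tr}=-\frac{\mu}{3}x$;
\item $(\delta_{g_E}H^{geo})_i=\frac13\partial_j(R_{ikjl}x^kx^l)=\frac13(R_{ijjl}x^l+R_{ikjj}x^k)=-\frac{\mu}{3}x^i$, where the second term vanishes by antisymmetry;
\item $-\frac12\Delta H^{geo}_{ij}=\frac13R_{ikjk}=\frac{\mu}{3}\delta_{ij}$.
\end{itemize}
Summing, $B_{g_E}H=-\frac{\mu}{3}x-\frac{\mu}{3}x+\frac{2\mu}{3}x=0$ and $\frac12\nabla^*\nabla H=\frac{\mu}{3}g_E+\frac{2\mu}{3}g_E=\mu g_E$, which are (ii) and (iii).

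The only delicate point is sign bookkeeping. We must fix the convention in which $R_{ikjl}$ with this index placement contracts to $+\Ric$, consistent with the given geodesic expansion, and confirm that the signs of $\delta$ and $\nabla^*\nabla$ match the paper's definitions. The choice of $c$ is forced by \eqref{H}, and the verification above then closes consistently, which serves as a check on the signs.
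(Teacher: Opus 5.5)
Your proposal is correct and follows the paper's proof: the same cubic coordinate change $x^j\mapsto x^j+c|x|^2x^j$ with $c=-\frac{\mu}{3(m+2)}$, followed by direct flat-space verification of (i)--(iii). Your decomposition $H=H^{geo}+2\delta^*_{g_E}P$ with $B_{g_E}\delta^*_{g_E}=\frac12\nabla^*\nabla$ just supplies the computations the paper states without detail, and your signs are consistent with the convention $R_{ikjk}(0)=\mu\delta_{ij}$.
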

\begin{proof}
    In geodesic coordinates around \( p \),
    \[
    (g_0)_{ij} = \delta_{ij} - \frac{1}{3} R_{ikjl}(0) x^k x^l + O(|x|^3),
    \]
    and $R_{ikjk}(0)=\mu\delta_{ij}$. Consider a coordinate change
    \[
    \varphi(x)^j = x^j + c |x|^2 x^j, \quad \text{where } c = -\frac{\mu}{3(m+2)}.
    \]
    Then
    \[
    (\varphi^* g_0)_{ij} = \delta_{ij} - \frac{1}{3} R_{ikjl}(0) x^k x^l + 2c\left(|x|^2 \delta_{ij} + 2x^i x^j\right) + O(|x|^3).
    \]

    In this coordinate, the quadratic term $H$ is given by
    \[
    \begin{aligned}
    H_{ij} &= - \frac{1}{3} R_{ikjl}(0) x^k x^l + 2c\left(|x|^2 \delta_{ij} + 2x^i x^j\right)\\
    &= - \frac{1}{3} R_{ikjl}(0) x^k x^l - \frac{2\mu}{3(m+2)}\left(|x|^2 \delta_{ij} + 2x^i x^j\right) .
    \end{aligned}
    \]
    We have \( H_{ij} \) is homogeneous of degree 2,
    \[
    B_{g_E}(H) = -\partial_i H_{ij} + \frac{1}{2} \partial_j H_{ii} = 0,
    \]
    \[
    \left(\frac{1}{2}\nabla^*_{g_E}\nabla_{g_E} H\right)_{ij} =- \frac{1}{2} \partial_k \partial_k H_{ij} = \mu \delta_{ij},
    \]
    so such a coordinate is what we want.
\end{proof}
In the following, we use \( H = H_{ij} dx^i dx^j \) given by the lemma above, which can be viewed as a tensor field on \( \R^m/\Gamma\). Next, we fix a cutoff function $\bar{\chi}$ on $N$, such that $\mathrm{supp}(\bar{\chi})\subset\{r_b>2R\}$ and $\bar{\chi}\equiv1$ on $\{r_b\geq 3R\}$, and solve
\[\begin{cases}
    P_{g_b}h_2=\mu g_b\\
    B_{g_b}h_2=0\\
    h_2 = \bar{\chi} H + O(r^{2-m+\varepsilon}), \ \forall \varepsilon > 0
\end{cases}.\]
A priori, this system may not have a solution, whose obstruction is given by $\coker(P_b)$.
\begin{proposition}\label{h2}
    The system 
\begin{equation}\label{curvature-perturb}
    \begin{cases}
        P_{g_b}h=\mu g_b+ \sum \lambda_j o_j\\
        B_{g_b}h=0\\
        h = \bar{\chi} H + O(r^{2-m+\varepsilon}), \ \forall \varepsilon > 0
\end{cases}
\end{equation}
always has a solution $h$ with 
\begin{equation}\label{lambda_j}
    \begin{aligned}
        \lambda_j =-\frac{m+2}{2}\lim_{r \to \infty} \int_{S_r / \Gamma}  \frac{\langle H, o_j \rangle}{r}   d\sigma_{S_r / \Gamma},
    \end{aligned}
    \end{equation}
where $\{ o_i \}$ is an orthonormal basis of $\coker(P_{g_b})=\SO(g_b)$. The solution is determined modulo $\ker(P_{g_b})=\SO(g_b)$. We can fix a particular solution $h_2$ by requiring $h_2 \perp \SO(g_b)$ with respect to $L^2$ inner product.
\end{proposition}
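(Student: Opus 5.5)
Following the standard approach of Biquard and Ozuch, I will reduce to a decaying problem. Set $h=\bar\chi H+k$, so that $k$ has to solve
\[
P_{g_b}k=\mu g_b-P_{g_b}(\bar\chi H)+\sum_j\lambda_j o_j=:f+\sum_j\lambda_j o_j .
\]
The first step is to estimate $f$. On $\{r_b\ge 3R\}$ we have $g_b=g_E+O(r^{-m})$ with $\nabla^k$-control, and $\mathrm{Rm}_{g_b}=O(r^{-m-2})$. Lemma \ref{lemmaH}(iii) gives $\frac12\nabla^*_{g_E}\nabla_{g_E}H=\mu g_E$, and $H$ is quadratic, so the error terms are
\[
(P_{g_b}-P_{g_E})H=O(r^{-m}),\qquad \mu(g_b-g_E)=O(r^{-m}).
\]
Hence $f$ is compactly supported plus $O(r^{-m})$, so $f\in C^{0,\alpha}_{\beta+2}$ for every $\beta<m-2$.

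Next I apply Fredholm theory for $P_{g_b}:C^{2,\alpha}_\beta\to C^{0,\alpha}_{\beta+2}$ with $\beta=m-2-\varepsilon$, which is non-exceptional by Remark \ref{rmk4.5}. Its cokernel is identified with $\SO(g_b)$ under the $L^2$ pairing; this works because elements of $\SO(g_b)$ decay like $r^{-m}$ by Proposition \ref{IEDb}(3), so they pair with $C^{0,\alpha}_{\beta+2}$. Thus $f+\sum\lambda_j o_j$ lies in the range exactly when it is $L^2$-orthogonal to each $o_j$, i.e.
\[
\lambda_j=-\int_N\langle f,o_j\rangle\, d\V_{g_b}.
\]
This gives $k\in C^{2,\alpha}_{m-2-\varepsilon}$, unique modulo $\SO(g_b)$, and I normalize it by requiring $h\perp\SO(g_b)$.

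To show $B_{g_b}h=0$, I will use $B_{g_b}P_{g_b}=\frac12\nabla^*\nabla B_{g_b}$, the Bianchi-operator analogue of Proposition \ref{IEDb}(1) on Ricci-flat manifolds. Applying this gives
\[
\tfrac12\nabla^*\nabla B(h)=B(\mu g_b)+\sum_j\lambda_jB(o_j)=0 ,
\]
since $B(g_b)=0$ for the parallel metric and $B(o_j)=0$ by Proposition \ref{IEDb}. On the other hand, $B_{g_b}h=B_{g_E}H+O(r^{1-m+\varepsilon})$, and $B_{g_E}H=0$ by Lemma \ref{lemmaH}(ii). So $B_{g_b}h$ is a decaying $1$-form annihilated by the rough Laplacian, and therefore vanishes by the maximum principle applied to $|B h|^2$.

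The main obstacle is turning $\lambda_j=-\int\langle f,o_j\rangle$ into the boundary formula \eqref{lambda_j}. I plan to integrate by parts on $\Omega_r$ using $P_{g_b}o_j=0$ and self-adjointness; only boundary terms at $S_r/\Gamma$ survive:
\[
\int_{\Omega_r}\langle P(\bar\chi H),o_j\rangle-\langle\bar\chi H,P o_j\rangle
=\frac12\int_{S_r/\Gamma}\big(\langle H,\partial_r o_j\rangle-\langle\partial_r H,o_j\rangle\big)\,d\sigma+o(1).
\]
The volume term $\mu\int\langle g_b,o_j\rangle$ vanishes because $\tr o_j=0$. Now I substitute the leading terms: $o_j\sim H^m$ is homogeneous of degree $-m$, so $\partial_r o_j\approx -\frac{m}{r}o_j$, while $\partial_r H=\frac{2}{r}H$. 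The boundary integrand then becomes
\[
\tfrac12\left(-\tfrac{m}{r}-\tfrac{2}{r}\right)\langle H,o_j\rangle=-\tfrac{m+2}{2}\,\frac{\langle H,o_j\rangle}{r}.
\]
With the sign conventions $\lambda_j=-\int\langle f,o_j\rangle$ and $f=\mu g_b-P(\bar\chi H)$, this yields \eqref{lambda_j}. The remaining care is in the error terms: they are $O(r^{2}\cdot r^{-m-\varepsilon}\cdot r^{m-2})\to0$, and the limit exists because the leading integrand is scale invariant.
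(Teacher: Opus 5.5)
Your proposal is correct and follows essentially the same route as the paper. You write $h=\bar\chi H+k$, check that $f=\mu g_b-P_{g_b}(\bar\chi H)=O(r^{-m})$, and solve for $k$ by Fredholm theory with $\lambda_j=-(f,o_j)_{L^2(g_b)}$. You then deduce $B_{g_b}h=0$ from $B_{g_b}P_{g_b}=\frac12\nabla^*\nabla B_{g_b}$ plus decay (you use the maximum principle for $|B_{g_b}h|^2$ where the paper integrates by parts). Finally you convert $\lambda_j$ into the boundary formula by Green's identity with $\partial_r H=\frac{2}{r}H$ and $\partial_r o_j\sim-\frac{m}{r}o_j$.

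One small point to tighten, which is also present in the paper's wording: $\langle\bar\chi H,o_j\rangle\sim r^{2-m}$ is in general not integrable. The $L^2$-orthogonality normalization should therefore be imposed on the decaying part $k$, not on $h$ itself.
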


\begin{proof}
    \textbf{Step 1: Verifying the Bianchi gauge.} Suppose $h$ is a solution of 
    \begin{equation}\label{equ4.4}
        \begin{cases}
            P_{g_b}h=\mu g_b+ \sum \lambda_j o_j\\
            h = \bar{\chi} H + O(r^{2-m+\varepsilon}), \ \forall \varepsilon > 0
        \end{cases}
    \end{equation}
    for some $\{\lambda_i\}$, then we show that $B_{g_b}h=0$. By Proposition \ref{IEDb}(2), 
    \[B_{g_b}P_{g_b}h=\mu B_{g_b}(g_b)+\sum\lambda_j B_{g_b}o_j=0.\]
    By Ricci identity, 
    \[0=B_{g_b}P_{g_b}h=\frac{1}{2}\nabla^*_{g_b}\nabla_{g_b}B_{g_b}h.\]
    Also, $h=\bar{\chi}H+O(r^{2-m+\varepsilon})$ implies $B_{g_b}h=O(r^{1-m+\varepsilon})$, a simple integration by parts shows that $B_{g_b}h=0$. So it remains to solve \eqref{equ4.4}.

    \textbf{Step 2: Solving \eqref{equ4.4}.} Let \(k = h-\bar{\chi} H\), then \(k\in C^{2,\alpha}_{m-2-\varepsilon}(g_b)\) and
    \begin{equation}\label{equ4.5}
        P_{g_b}(k)=\mu g_{b}-P_{g_b}(\bar{\chi} H)+ \sum \lambda_j o_j.
    \end{equation}
    
    First of all, we show that $\mu g_{b}-P_{g_b}(\bar{\chi} H)=O(r^{-m})$. In fact, 
    \begin{align*}
        P_{g_b}(\bar{\chi} H)&=P_{g_b}(\bar{\chi} H)+P_{g_b}(\bar{\chi}\delta^*_{g_E}V)=P_{g_b}(H)+P_{g_E}(\delta^*_{g_E}V)+O(r^{-m})\\
        &=\frac{1}{2}\nabla^*_{g_b}\nabla_{g_b}H-\mathring{R}_{g_b}(H)+O(r^{-m})\\
        &=\mu g_E+O(r^{-m}),
    \end{align*}
    and $g_b=g_E+O(r^{-m})$.

    Now we solve \eqref{equ4.5}, where the right hand side is $O(r^{-m})$. Such \(k\) exists if and only if the right hand side lies in $\mathrm{Im}(P_{g_b})=\coker(P_{g_b})^{\perp}$, or equivalently, 
    \[\lambda_j:=-(\mu g_{b}-P_{g_b}(\bar{\chi} H),o_j)_{L^2(g_b)}.\]

    \textbf{Step 3: Formulating $\lambda_j$.} Since each $o_j$ is trace-less,
    \begin{align*}
        \int_{r\leq R}\langle\mu g_{b}-P_{g_b}(\bar{\chi} H),o_{j}\rangle dV&=\int_{r\leq R}\langle - P_{g_b}(\bar{\chi} H),o_j\rangle dV\\
        &=-\frac{1}{2}\int_{r\leq R}\langle\nabla^{*}\nabla(\bar{\chi} H),o_j\rangle dV+\int_{r \leq R}\langle\mathring{R}(\bar{\chi} H),o_j\rangle dV.
    \end{align*}
    Using integration by parts, 
    \begin{align*}
    &\int_{r\leq R}\langle\nabla^{*}\nabla(\bar{\chi} H),o_j\rangle dV\\
    =&-\int_{r = R}\langle\nabla_{\nu}(\bar{\chi} H),o_j\rangle d\sigma+\int_{r\leq R}\langle\nabla(\bar{\chi} H),\nabla o_j\rangle dV\\
    =&-\int_{r = R}\langle\nabla_{\nu}(\bar{\chi} H),o_j\rangle d\sigma+\int_{r= R}\langle\bar{\chi} H,\nabla_{\nu} o_j\rangle d\sigma+\int_{r\leq R}\langle\bar{\chi} H,\nabla^*\nabla o_j\rangle dV.
    \end{align*}
    Note that \(P_{g_b}(o_j) = 0\). We have
    \[\int_{r\leq R}\langle\mu g_{b}-P_{g_b}(\bar{\chi} H),o_j\rangle dV=\frac{1}{2}\int_{r = R}\langle\nabla_{\nu}(\bar{\chi} H),o_j\rangle d\sigma-\frac{1}{2}\int_{r= R}\langle\bar{\chi} H,\nabla_{\nu} o_j\rangle d\sigma.\]

    Later, we will take the limit \(R\rightarrow\infty\), so now we use $\sim$ to denote that two quantities differ by a decaying term which will eventually tend to 0. We have 
    \[\nu\sim\partial_r,\quad\nabla_{\nu}(\bar{\chi} H)\sim\partial_{r}H\sim\frac{2}{r}H,\quad\nabla_{\nu}o_j\sim\partial_{r}o_j\sim\frac{-m}{r}o_j.\]
    So
    \begin{align*}
        \lambda_{i}&=-(\mu g_{b}-P_{g_b}(\bar{\chi} H),o_j)_{L^{2}(g_{b})}\\
        &=-\lim_{R\rightarrow\infty}\int_{r\leq R}\langle\mu g_{b}-P_{g_b}(\bar{\chi} H),o_j\rangle dV \\
        &= -\frac{2+m}{2}\lim_{r\rightarrow\infty}\int_{S_{r}}\frac{\langle H,o_j\rangle}{r}d\sigma.
    \end{align*}
\end{proof}

\subsection{Better approximation}
Suppose a compact Einstein orbifold $(M_0,g_0)$ admits a resolution sequence, and its Einstein constant $\mu<0$. As we pointed out in Proposition \ref{Phi=0}, there exist parameters $0<t_i<\delta_i^2\ll1$ with $\lim_{i\to\infty}t_i=\lim_{i\to\infty}\delta_i=0$, such that for each gluing metric $\gtidi$, there is an Einstein metric $g_i$ with
\[\Phi_{\gtidi}(g_i-g_{t_i})=0,\quad \lim_{i\to\infty}\|g_i-g_{t_i}\|_{\ctab(g_{t_i})}=0.\]
For simplicity of notations, we omit the subscript $i$, denote $\gtd=\gtidi$, and $g=g_i$ as the Einstein metric in Bianchi gauge. Now we are going to perform some deformations on both $(M_0,g_0)$ and $(N,g_b)$, then get another gluing metric on $M=M_0\#N$. We will see that under proper deformations, the new gluing metric approximate the Einstein metric $g$ better.

Recall that in Propositions \ref{bdeform} and \ref{0deform}, for small $v\in\ker(d\Psi_0)$ and $w\in\ker(d\Psi'_0)$, we get deformations 
\[g_{b,v}:=g_b+h_{v},\text{ and }g_{0,w}:=g_0+h_{w}.\]
Also recall that we get a $(0,2)-$tensor field $h_2$ on $(N,g_b)$, in Proposition \ref{h2}. Now we glue $(N,g_{b,v}+t^2h_2)$ and $(M_0,g_{0,w})$ together, in the same manner as \eqref{naive-gluing}:
\begin{equation}\label{better-gluing}
    \gtdp:=t^2\chi_t(g_{b,v}+t^2h_2)+(1-\chi_t)g_{0,w}.
\end{equation}
Again, for simplicity of notations, we omit the dependence of $g_t'$ on $v,w$ here.
\begin{remark}
    Note that $h_2=O(r_b^2)$, and is not positive definite, hence $g_{b,v}+t^2h_2$ is not positive definite near infinity of $N$. But in the gluing construction, we truncate it in a precompact subset, so for $t$ small enough, $\gtdp$ defined above is indeed a metric.
\end{remark}
\begin{lemma}\label{lem4.12}
    There exists $\varepsilon>0$ such that, for each Einstein metric $g$ with
    \[\Phi(g-g_t)=0,\quad\|g-g_t\|_{\ctab(g_t)}<\varepsilon,\]
    there exist small $v\in\ker(d\Psi_0)$ and $w\in\ker(d\Psi'_0)$, such that the refined gluing metric $\gtdp$ satisfies
    \[g-\gtdp\in\tilde{\SO}(\gtd)^\perp.\]
\end{lemma}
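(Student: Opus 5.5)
We set this up as a finite-dimensional fixed point (or degree) problem. The unknowns are $(v,w)\in\ker(d\Psi_0)\oplus\ker(d\Psi'_0)=\SO(g_b)\oplus\SO(g_0)\oplus\R\langle g_0\rangle$, which has the same dimension as $\TO(\gtd)$. Consider the map
\[
F(v,w):=\pi\big(g-\gtdp(v,w)\big)\in\TO(\gtd),
\]
where $\pi$ is the $L^2(\gtd)$-projection of Notation \ref{pi}. The lemma is equivalent to finding a small zero of $F$. We will show three things: $F$ is $C^1$; its differential at $(0,0)$ is a uniformly invertible perturbation of an explicit isomorphism; and $F(0,0)$ is small, controlled by $\|g-\gtd\|_{\ctab}+o(1)$ as $t\to0$. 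A quantitative inverse function theorem in finite dimensions then gives the zero. The same argument as Lemma \ref{IFT} works here, since $\TO(\gtd)$ is finite dimensional.

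\textbf{Step 1: the linearization.} By the ``moreover'' part of Lemma \ref{LSreduction}, applied through Propositions \ref{bdeform} and \ref{0deform}, the maps $v\mapsto h_v$ and $w\mapsto h_w$ are $C^1$ with differential the inclusion at $0$. Since $\gtdp$ is linear in $(h_v,h_w)$,
\[
d\gtdp_{(0,0)}(v,w)=t^2\chi_t v+(1-\chi_t)w .
\]
We then write $w=O+\lambda g_0$ with $O\in\SO(g_0)$. The term $(1-\chi_t)O$ differs from $\bar O_t$ only by $\chi_{B(p,\delta)}O_A$-type pieces supported near the neck. The term $t^2\chi_t v$ equals $\bar v_t$. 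The term $\lambda(1-\chi_t)g_0$ differs from $\lambda\gtd$ only on $\{r_t<2\sqrt t\}$. Hence $d\gtdp_{(0,0)}$ is, up to errors that are $o(1)$ in the relevant $L^2(\gtd)$ pairing as $t\to0$, the ``gluing map''
\[
(v,O,\lambda)\mapsto \bar v_t+\bar O_t+\lambda\gtd .
\]
This map is an isomorphism onto $\TO(\gtd)$. We will check that its Gram matrix, in suitably $t$-rescaled bases, is close to block diagonal and nondegenerate. The cross terms $\langle\bar o_{i,t},\bar O_{j,t}\rangle_{L^2}$ and $\langle \bar o_{i,t},\gtd\rangle$ are small. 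This uses the $r^{-m}$ decay of $o_i$ from Proposition \ref{IEDb}(3) and tracelessness, the latter giving $\langle \bar o_{i,t},\gtd\rangle\approx t^{m}\int\tr_{g_b}o_i=0$ up to cutoff errors. Consequently $dF_{(0,0)}=-\pi\circ d\gtdp_{(0,0)}$ is invertible with inverse bounded uniformly in $t$, in norms matching the rescaling ($t^{m/2}$ weight on the $N$ side).

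\textbf{Step 2: control at the base point and of the nonlinear part.} We have $F(0,0)=\pi(g-\gtd-t^4\chi_t h_2)$. The first piece has size at most $C\|g-\gtd\|_{\ctab}$, which is small by Proposition \ref{Phi=0} and Theorem \ref{neckana}. The second piece is $O(t^{2-\beta/2})$-type small, because $h_2=O(r_b^2)$ and is cut off at $r_t\sim\sqrt t$. For the remainder we use the $C^1$ dependence to get $\|F(v,w)-F(0,0)-dF_{(0,0)}(v,w)\|=o(\|(v,w)\|)$, uniformly in $t$. This holds because the maps $v\mapsto h_v$ and $w\mapsto h_w$ are independent of $t$, and the gluing and projection are uniformly bounded.

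\textbf{Main obstacle.} The delicate point is the uniformity in $t$ of the bounds in Step 1, i.e.\ normalizing $\TO(\gtd)$ so that the inverse of $dF_{(0,0)}$ does not blow up. The $N$-side elements $\bar o_t$ have $L^2(\gtd)$ norm of order $t^{m/2}$, while the $M_0$-side elements are of order $1$. We therefore plan to rescale $v$ by $t^{-m/2}$ (equivalently, measure $\pi$ in the $\ctab$ norm, where both sides are $O(1)$). We also need to verify that the decoupled constant parts $O_A$ on the neck contribute only $O(\delta^{m/2})$ to the pairings. Once this is done, the finite-dimensional contraction argument with $\varepsilon$ small yields $(v,w)$ with $F(v,w)=0$, i.e.\ $g-\gtdp\in\TO(\gtd)^\perp$.
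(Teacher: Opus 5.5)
Your proposal is correct and follows the same route as the paper. Both use a finite-dimensional inverse function theorem for $(v,w)\mapsto\pi(g-\gtdp)$ (equivalently the paper's $A(v,w)=\pi(\gtdp-\gtd)$), with linearization $-\pi\bigl(t^2\chi_t v+(1-\chi_t)w\bigr)$ coming from the ``moreover'' part of Lemma \ref{LSreduction} and the dimension count $1+\dim\SO(g_b)+\dim\SO(g_0)$. You go further than the paper by justifying injectivity, tracking uniformity in $t$, and accounting for the nonzero base value $\pi(t^4\chi_t h_2)$, all of which the paper passes over.

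One simplification for your Step 1: writing $v=\sum_i a_i o_i$, you have $\pi(t^2\chi_t v)=\sum_i a_i\bar o_{i,t}$ exactly. So the linearization is block upper-triangular in the basis $\{\bar o_{i,t},\bar O_{j,t},\gtd\}$. In the scaling where $\bar o_{i,t}$ has unit size, which is the one in which your quadratic remainder is genuinely uniform in $t$, the neck error coming from $O_A$ therefore only has to be bounded, not small.
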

\begin{proof}
    Recall that we have a projection $\pi$ onto $\TO(\gtd)$. Denote $V=\pi(g-\gtd)$, so $g-(\gtd+V)\in\TO(\gtd)^\perp$.

    \begin{figure}[h]
        \centering
        \includegraphics[scale=0.3]{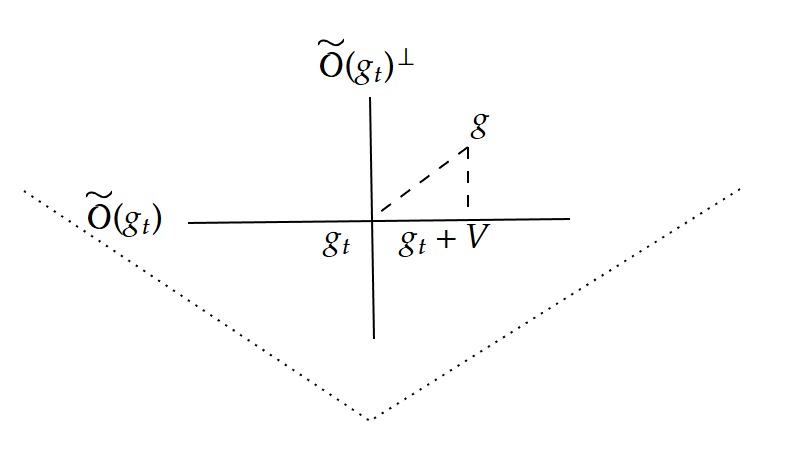}
        \caption{Cone of metrics on $M=M_0\#N$}
    \end{figure}

    Now it remains to prove that, we can find suitable $v\in\ker(d\Psi_0)$ and $w\in\ker(d\Psi'_0)$, such that $\gtdp-(\gtd+V)\in\TO(\gtd)^\perp$. This following from such a claim:

    \textbf{Claim:} The map $A:\ker(d\Psi_0)\oplus\ker(d\Psi'_0)\to\TO(\gtd)$, $(v,w)\mapsto \pi(\gtdp-\gtd)$ maps a neighbourhood of $0\in\ker(d\Psi_0)\oplus\ker(d\Psi'_0)$ surjectively to a neighbourhood of $0\in\TO(\gtd)$.

    For proof of this claim, it suffices to prove that $dA_0$ is surjective. By the "moreover" part of Lemma \ref{LSreduction}, we know both
    \[\left.\frac{dg_{b,v}}{dv}\right|_0:\ker(d\Psi_0)\to C^{2,\alpha}_\beta(g_b)\text{ and }\left.\frac{dg_{0,w}}{dw}\right|_0:\ker(d\Psi'_0)\to C^{2,\alpha}(g_0)\]
    are inclusions. Hence
    \[dA_0(v,w)=\pi\bigg(t^2\chi_tv+(1-\chi_t)w\bigg).\]\
    
    Since $\ker(d\Psi_0)=\SO(g_b)$, $\ker(d\Psi'_0)=\SO(g_0)\oplus\R\langle g_0\rangle$, the domain and target of $dA_0$ has the same dimension $1+\dim\SO(g_b)+\dim\SO(g_0)$. Now $dA_0$ has trivial kernel, hence $dA_0$ is surjective, and by implicit function theorem, the claim holds.
\end{proof}

\begin{proposition}\label{better-close}
    For Einstein metric $g$ with
    \[\Phi(g-g_t)=0,\quad\|g-g_t\|_{\ctab(g_t)}<\varepsilon,\]
    and refined gluing \( g_{t}' \) given by Lemma \ref{lem4.12}, there exists \( C>0 \) such that
    \[
    \| g - g_{t}' \|_{\ctab(g_{t})} \leq C \, t^{\frac{3-\beta}{2}}.
    \]
\end{proposition}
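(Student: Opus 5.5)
We compare $\Phi_{\gtd}$ at $g-\gtd$ and at $\gtdp-\gtd$, and convert the difference into a bound on $g-\gtdp$ via the linear estimate of Corollary \ref{linear-estimate}. Set $s:=g-\gtdp$. By Lemma \ref{lem4.12}, $s\in\TO(\gtd)^\perp$. Since $g$ is Einstein and in Bianchi gauge, $\Phi(g-\gtd)=0$. Writing $h':=\gtdp-\gtd$ and expanding,
\[
0=\Phi(h'+s)=\Phi(h')+d\Phi_{h'}(s)+Q_{h'}(s),
\]
where $Q_{h'}$ is quadratic. Applying $\pi^\perp$ and using $d\Phi_{h'}=d\Phi_0+O(\|h'\|)$, Corollary \ref{linear-estimate} gives
\[
\|s\|_{\ctab}\le C\|\pi^\perp\Phi(h')\|_{\cab}+C\big(\|h'\|_{\ctab}+\|s\|_{\ctab}\big)\|s\|_{\ctab}.
\]
Both $\|h'\|$ and $\|s\|$ are small: $\|s\|\le\|g-\gtd\|+\|h'\|<\varepsilon+o(1)$, because $v,w\to0$ as $\varepsilon\to0$ by the continuity in Propositions \ref{bdeform}, \ref{0deform}, and $t^2h_2$ is small in the weighted norm. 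The quadratic term can therefore be absorbed, and everything reduces to the error estimate
\[
\|\pi^\perp\Phi_{\gtd}(\gtdp-\gtd)\|_{\cab(\gtd)}\le Ct^{\frac{3-\beta}{2}}.
\]

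To prove this error estimate we split $M$ into $N^\delta$, $M_0^\delta$ and the neck $A(t,\delta)$, and treat each region separately.

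\textbf{Region $N^\delta$.} Here $\gtdp=t^2(g_{b,v}+t^2h_2)$. We use three facts: $\Psi(h_v)\in\SO(g_b)$; $P_{g_b}h_2=\mu g_b+\sum\lambda_jo_j$ with $B_{g_b}h_2=0$; and the scaling of $\Ric$. Together these show that $\Ric(\gtdp)-\mu\gtdp$ equals a combination of $t^2\chi_t o_j$ (which lies essentially in $\TO(\gtd)$ and is killed by $\pi^\perp$ up to small error), plus $O(t^4|h_2|^2)$-type quadratic terms, plus cross terms between $h_v$ and $h_2$. The normalized scalar term equals $\mu$ up to $O(t^{m})$ volume corrections.

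The gauge term $\delta^*_{\gtd}B_{\gtd}(\gtdp)$ must be handled with care, since the gauge in $\Phi$ is relative to $\gtd$, not to $g_{b,v}$. We control it using $B_{g_b}h_2=0$ and the estimate $\|B_{g_b}h_v\|\lesssim\|h_v\|^2$.

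\textbf{Region $M_0^\delta$.} Here $\gtdp=g_{0,w}$, and $\Psi'(h_w)\in\SO(g_0)\oplus\R g_0$, which is again essentially in $\TO(\gtd)$. The gauge discrepancy between $B_{g_0}$ and $B_{\gtd}$ vanishes, because $\gtd=g_0$ there.

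\textbf{Neck $A(t,\delta)$ (main obstacle).} This is where the specific choice of $h_2$ pays off. Since $H$ is the quadratic term of $g_0$ in Bianchi coordinates (Lemma \ref{lemmaH}), on the neck we have
\[
g_0=g_E+H+O(r_t^3),\qquad t^2(g_b+t^2h_2)=g_E+H+O\big(t^m r_t^{-m}\big)+O\big(t^{m-\varepsilon}r_t^{2-m+\varepsilon}\big)
\]
after rescaling. The cutoff terms $\nabla\chi_t\ast(\text{difference})$ therefore only see the differences $O(r_t^3)+O\big((t/r_t)^{m-2}\big)$; the $v,w$ contributions are handled similarly, with their linear parts lying in $\TO(\gtd)$. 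On $r_t\sim\sqrt t$, and after multiplying by the weight $r_t^{2}\cdot r_t^{-\beta}$ in $\cab$, the dominant error $r_t^3$ differentiated twice gives $r_t^{1}\cdot r_t^{-\beta}\sim t^{\frac{3-\beta}{2}}$ up to the convention for the weight. The terms $(t/r_t)^{m-2}$ are even smaller for $m\ge4$. The delicate part is the bookkeeping: one must check that the $r_t^3$ remainder in $g_0$, the $O(r_b^{2-m+\varepsilon})$ remainder in $h_2$, and the $O(r_b^{-m})$ remainder in $g_b$ all produce at most $t^{\frac{3-\beta}{2}}$ in $\cab$, and that the gauge term $\delta^*B_{\gtd}(\gtdp)$ is also of this order, using $B_{g_E}H=0$.

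Combining the three regions gives the error estimate, and absorbing the quadratic term as above concludes the bound $\|g-\gtdp\|_{\ctab}\le Ct^{\frac{3-\beta}{2}}$.
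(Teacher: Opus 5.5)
Your proposal is essentially the paper's proof: the same absorption argument via Corollary \ref{linear-estimate} reduces everything to $\|\pi^\perp\Phi_{\gtd}(\gtdp-\gtd)\|_{\cab}$, and the same three-region estimate, with $H$ from Lemma \ref{lemmaH} matched on the neck, leaves an $O(t^{3/2})$ discrepancy there and hence the rate $t^{\frac{3-\beta}{2}}$. Two bookkeeping slips need fixing: on the neck the weighted count is $\rtd^{2}\cdot\rtd\cdot\rtd^{-\beta}=\rtd^{3-\beta}$, not $\rtd^{1-\beta}$; and your coarse bound $(t/\rtd)^{m-2}\sim t^{\frac{m-2}{2}}$ is \emph{not} below $t^{3/2}$ when $m=4$, so use instead the sharper remainders from your own display, $t^m\rtd^{-m}+t^{m-\varepsilon}\rtd^{2-m+\varepsilon}=O(t^{m/2})$ on $\rtd\sim\sqrt t$.
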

\begin{proof}
    \textbf{Step 1: Control by \( \pi^\perp\circ \Phi(g_{t}' - g_{t}) \).}

    We denote \( Q := \pi^\perp\circ \Phi - \pi^\perp\circ d\Phi_0 \) to be the nonlinear term of \( \pi^\perp\circ \Phi \), hence for \( x:=g - g_{t} \), \( y:=g_{t}' - g_{t} \),
    \[
    \| Q(x) - Q(y) \| \leq C \left( \|x\| + \|y\| \right) \| x - y \|.
    \]
    Since \( x-y = g - g_{t}' \in \TO(g_{t})^{\perp} \), by Corollary \ref{linear-estimate}, we have
    \begin{align*}
    \| g - g_{t}' \| &\leq C \| \pi^\perp\circ d\Phi_0(g - g_{t}') \| \\
    &= C \| \pi^\perp\circ d\Phi_0(x) - \pi^\perp\circ d\Phi_0(y) \| \\
    &\leq C \left( \| \pi^\perp\circ \Phi(x) - \pi^\perp\circ \Phi(y) \| + \| Q(x) - Q(y) \| \right) \\
    &\leq C \| \pi^\perp\circ \Phi(y) \| + C \left( \|x\| + \|y\| \right) \| x - y \|.
    \end{align*}
    By our choice of \( g \) and \( g_{t}' \), \( \|x\| \) and \( \|y\| \) are small enough, such that \( C(\|x\| + \|y\|)<1 \), hence
    \[
    \| g - g_{t}' \| \leq C \| \pi^\perp\circ \Phi(y) \| = C \| \pi^\perp\circ \Phi(g_{t}' - g_{t}) \|.
    \]

    \textbf{Step 2: Controlling \( \pi^\perp\circ \Phi(g_{t}' - g_{t}) \).}

    We recall the decomposition of \( M = M_0{\#}N \):
    \[
    N^\delta := \{ \rtd \leq t\delta^{-1} \}, \quad A(t,\delta) := \{ t\delta^{-1} \leq \rtd \leq \delta \}, \quad M_0^{\delta} := \{ \rtd \geq \delta \},
    \]
    and consider \( \Phi(g_{t}' - g_{t}) \) on each part.

    \begin{figure}[h]
        \centering
        \includegraphics[scale=0.3]{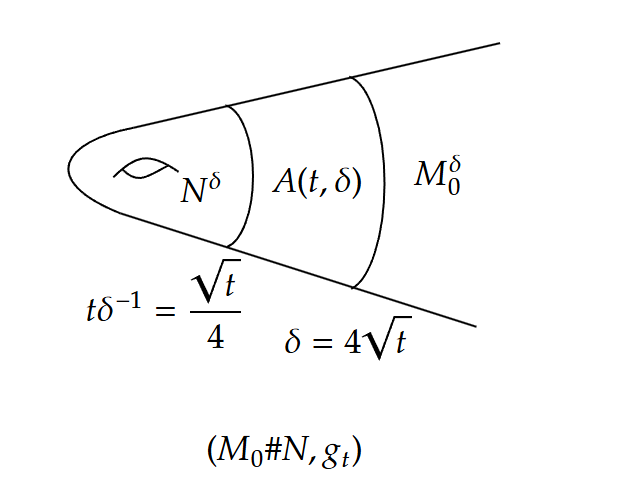}
        \caption{Different parts of the glued manifold}
    \end{figure}

    On \( N^\delta \), the main term is
    \begin{align*}
    \Phi\left( t^2(g_{b,v} + t^2h_2) - g_{t} \right) &= \operatorname{Ric}\left( t^2(g_{b,v} + t^2h_2) \right)-t^2\mu g_b + \delta_{g_{t}}^* B_{g_{t}}\left( t^2(g_{b,v} + t^2h_2) \right) + o(t^2) \\
    &= \operatorname{Ric}\left( t^2g_{b,v} \right) + \delta_{t^2g_b}^* B_{t^2g_b}\left( t^2g_{b,v} \right) + t^2P_{g_b}h_2-t^2\mu g_b+ o(t^2) \\
    &= {\Psi}(g_{b,v}-g_b) + t^2 \sum_j \lambda_j o_j + o(t^2),
    \end{align*}
    and by definition, \( {\Psi}(g_{b,v}-g_b), o_j \in \mathcal{O}(g_b) \).

    On \( M_0^{\delta} \), the main term is
    \[
    \Phi(g_{0,w} - g_{t}) = {\Psi}'(g_{0,w} - g_0) + O(t^2),
    \]
    and by definition, \( {\Psi}'(g_{0,w} - g_0) \in \mathcal{O}(g_0) \).

    On \( A(t,\delta) \), 
    \[ g_{t}' = \chi_t \left( t^2(g_{b,v} + t^2h_2) - g_{0,w} \right) + (1 - \chi_t) g_{0,w}, \]
    and we need to count the order of \( t^2(g_{b,v} + t^2h_2) - g_0 \). In suitable coordinates given by Lemma \ref{lemmaH},
    \[
    g_0 = g_E + H + O(r_{t}^3),
    \]
    and
    \[
    t^2(g_{b,v} + t^2h_2) = g_E + O(t^2r_b^{-m}) + H + O(t^4r_b^{2-m+\varepsilon}) = g_E + H + O\left(t^{m+2}\rtd^{-m}+t^{m+2-\varepsilon}\rtd^{2-m+\varepsilon}\right).
    \]
    Since on \( A(t,\delta) \), \( \rtd \sim \sqrt{t} \), we have
    \begin{align*}
    t^2(g_{b,v} + t^2h_2) - g_0 &= O(r_{t}^3) + O\left(t^{m+2}\rtd^{-m}\right)+O\left(t^{m+2-\varepsilon}\rtd^{2-m+\varepsilon}\right) \\
    &= O\left( t^{\frac{3}{2}} \right) + O\left(  t^{2+\frac{m}{2}} \right) + O\left( t^{3+\frac{m-\varepsilon}{2}} \right) \\
    &= O\left( t^{\frac{3}{2}} \right).
    \end{align*}
    Hence \( \rtd^{2} (\pi^\perp\circ\Phi)(g_{t}' - g_{t}) = O(t^{\frac{3}{2}}) \) on \( A(t,\delta) \), and
    \[
    \left\| \pi^\perp\circ \Phi(g_{t}' - g_{t})|_{A(t,\delta)} \right\|_{\cab(\gtd)} = \max\left\{ \rtd^\beta, \rtd^{-\beta} \right\} \cdot \left| \rtd^{2} (\pi^\perp\Phi)(g_{t}' - g_{t}) \right|_{C^{0,\alpha}} = O\left( t^{\frac{3 - \beta}{2}} \right).
    \]

    In summary, we have
    \[
    \left\| \pi^\perp\circ \Phi(g_{t}' - g_{t}) \right\|_{\cab(\gtd)} = O\left( t^{\frac{3 - \beta}{2}} \right).
    \]
\end{proof}

\section{Obstruction for resolution}
In this section, we analyze the obstruction for admitting a resolution sequence. By concentrating on the region \( N^{\delta} = \{x\in M_0\#N| \rtd(x) < t\delta^{-1} \}\cong\{x\in N|r_b(x)<\delta^{-1}\} \), the leading obstruction will be divided into two parts: the first part consists of integrability of the blow-up limit, and the second part is the obstruction of linearized Ricci equation in Section \ref{sec4.3}. 

For clarity, we consider the two parts separately. First we assume the blow-up limit is integrable, and treat the second part separately. Then we analysis the obstruction of integrability, using Bianchi identity.

Summing up, we are able to formulate an obstruction for general smooth blow-up limits. This leads to the proof of our main theorem \ref{Maintheorem}.

In the last subsection, we prove that compact hyperbolic orbifold with $\Z_2$ singularity does not admit a resolution sequence, and explain why the obstruction in \cite{MV} is a special case of our obstruction.
\subsection{For integrable blow-up limits}
Come back to our resolution problem. Given an Einstein orbifold \( (M_0, g_0) \) with a single orbifold point \( p \), and $\Ric(g_0)=\mu g_0$. Assume it admits a resolution sequence \( \{ (M_i, g_i) \} \subset \mathcal{M}(m, \Lambda, D, V, E) \), and the blow-up limit \( (N,g_b) \) at \( p \) is smooth. By previous discussions, we have:
\begin{itemize}
    \item[(1)] a sequence of direct gluing metrics \( g_{t_i} \) near \( g_i \), with \( \lim_{i \to \infty} t_i  = 0 \);
    \item[(2)] by gauge choice and slightly rescaling, we can assume
          \[
          B_{g_{t_i}} g_i = 0, \quad \operatorname{Ric}(g_i) = \mu g_i;
          \]
    \item[(3)] refined gluing metrics \( g_{t_i}' \), such that
          \[
          \| g_i - g_{t_i}' \|_{\ctab(\gtidi)} = O\left( t_i^{\frac{3 - \beta}{2}} \right).
          \]
\end{itemize}
Then we concentrate on \( (N^{\delta_i}, t_i^{-2}g_{t_i} = g_b) \). After rescaling, we rewrite (2) as:
\begin{itemize}
    \item[(2')] \( {\Psi}(t_i^{-2}g_i - g_b) = \operatorname{Ric}(t_i^{-2}g_i) + \delta_{g_b}^* B_{g_b}(t_i^{-2}g_i) = \mu g_i = t_i^2 \mu (t_i^{-2}g_i) \);
\end{itemize}
And from the proof of Proposition \ref{better-close}, on $N^{\delta_i}$ we have the estimate
\begin{itemize}
    \item[(3')] \( |t_i^{-2}g_i - (g_{b,v_i} + t_i^2 h_2)| = o\left( t_i^2 r_b^{-\beta} \right) \).
\end{itemize}

By Taylor's expansion, we have (denoting $h_{v_i}=g_{b,v_i}-g_b$ below, in the same convention as Proposition \ref{bdeform})
\begin{align*}
    0 &= \Psi(t_i^{-2}g_i - g_b) - t_i^2 \mu (t_i^{-2}g_i) \\
&= \Psi\left( g_{b,v_i} + t_i^2 h_2 - g_b \right) + o\left( t_i^2 r_b^{-2-\beta} \right) - t_i^2 \mu (t_i^{-2}g_i) \\
&= \Psi\left( h_{v_i} + t_i^2 h_2 \right) - t_i^2 \mu g_b +t_i^2\mu(g_b-t_i^{-2}g_i) + o\left( t_i^2 r_b^{-2-\beta} \right) \\
&= \Psi(h_{v_i}) + d\Psi_{h_{v_i}}(t_i^2 h_2) - t_i^2 \mu g_b+t_i^2\mu(g_b-t_i^{-2}g_i) + o\left( t_i^2 r_b^{-2-\beta} \right) \\
&= \Psi(h_{v_i})  + d\Psi_{0}(t_i^2 h_2) - t_i^2 \mu g_b+ \left(d\Psi_{h_{v_i}}-d\Psi_0\right)(t_i^2 h_2)+t_i^2\mu(g_b-t_i^{-2}g_i) + o\left( t_i^2 r_b^{-2-\beta} \right).
\end{align*}

Recall that by Proposition \ref{h2}, $(h_2,\lambda_j)$ is the solution to $d\Psi_{0}(h_2) - \mu g_b=\sum_{j}\lambda_jo_j$, where \( \{ o_j \} \) is an orthonormal basis of \( \mathcal{O}(g_b) \).

As pointed out in Proposition \ref{bdeform} and Remark \ref{rmk4.5}, for an arbitrary small $\varepsilon>0$, the map $C^{2,\alpha}_{m-2-\varepsilon}\cap\ker(P_{g_b})\to C^{2,\alpha}_{m-2-\varepsilon}$, $v\mapsto h_v$ is continuous. As $i\to\infty$, $v_i$ above tends to $0$, hence $\|h_{v_i}\|_{C^{2,\alpha}_{m-2-\varepsilon}}=o(1)$. Thus 
\[\left(d\Psi_{h_{v_i}}-d\Psi_0\right)(t_i^2 h_2)=o\left(t_i^2r_b^{2-m+\varepsilon}\right)=o\left( t_i^2 r_b^{-\beta} \right).\]

Also, by definition,
\[\left\|\left(t_i^{-2}g_i|_{N^{\delta_i}}\right)-\left(g_b|_{N^{\delta_i}}\right)\right\|_{C^{2,\alpha}_\beta(g_b)}\leq\|g_i-g_{t_i}\|_{C^{2\alpha}_{\beta,\beta}(g_{t_i})}=o(1),\]
so we have $t_i^2\mu(g_b-t_i^{-2}g_i)=o(t_i^2r_b^{-\beta})$. In sum, we have the following decomposition for leading obstruction:
\begin{equation}\label{decom-obstruction}
    \Psi(h_{v_i}) + t_i^2 \sum_j \lambda_j o_j = o\left( t_i^2 r_b^{-\beta} \right)\quad\text{on }N^{\delta_i}.
\end{equation}

Recall in Proposition \ref{bdeform}, we proved that for any \( v \in \mathcal{O}(g_b) \) with \( \|v\| \leq \eta \), there exists a unique \( h_v \in C^{2,\alpha}_\beta(g_b) \) in a neighborhood of 0, such that
\[
\Psi(h_v) \in \mathcal{O}(g_b), \quad h_v - v \in \mathcal{O}(g_b)^{\perp}.
\]
\begin{definition}
    We call a Ricci-flat ALE manifold \( (N, g_b) \) is \textbf{integrable}, if for each \( v \in \mathcal{O}(g_b) \) with \( \|v\| \leq \eta \), the tensor field \( h_v \) given by Proposition \ref{bdeform} satisfies \( \Psi(h_v) = 0 \).
\end{definition}

Suppose the blow-up limit \( (N, g_b) \) is integrable, then
\[
t_i^2 \sum_j \lambda_j o_j = o\left( t_i^2 r_b^{-\beta} \right) \quad \text{for the sequence } t_i \to 0,
\]
which forces each $\lambda_j=0$.
\begin{proposition}
    Given an Einstein orbifold \( (M_0, g_0) \) with a single orbifold point \( p \), and $\Ric(g_0)=\mu g_0$. Assume it admits a resolution sequence \( \{ (M_i, g_i) \} \subset \mathcal{M}(m, \Lambda, D, V, E) \), and the blow-up limit \( (N,g_b) \) at \( p \) is smooth and integrable. Then for any orthonormal basis $\{o_j\}$ of $\SO(g_b)$, we have each
    \[\lambda_j =-\frac{m+2}{2}\lim_{r \to \infty} \int_{S_r / \Gamma}  \frac{\langle H, o_j \rangle}{r}   d\sigma_{S_r / \Gamma}\]
    given by \eqref{lambda_j} vanishes, where $H=\left[- \frac{1}{3} R_{ikjl}(0) x^k x^l - \frac{2\mu}{3(m+2)}\left(|x|^2 \delta_{ij} + 2x^i x^j\right)\right]dx^i\otimes dx^j$ is given by \eqref{H}. 

    In particular, if we take $o_0\in\SO(g_b)$ to be proportional to $(\mathscr{L}_{\nabla u}g_b)^\circ$ given by \eqref{o0expansion}, then
    \begin{equation}\label{lambda_0}
        \lambda_0=-\frac{1}{\|(\mathscr{L}_{\nabla u}g_b)^\circ\|_{L^2(g_b)}}\left[2m(m-2)\mu\mathcal{V}+\frac{\omega_{m-1}}{3|\Gamma|}(W_{ikjl}(0)W^\infty_{ikjl}+W_{ikjl}(0)W^\infty_{iljk})\right]=0,
    \end{equation}
    where $\omega_{m-1}:=\mathrm{Area}_{g_E}(S^{m-1})$, and $W_{ijkl}(0)$ is the Weyl curvature of $(M_0,g_0)$ at $p$.
\end{proposition}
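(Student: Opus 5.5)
The vanishing of the $\lambda_j$ comes directly from the decomposition \eqref{decom-obstruction}; formula \eqref{lambda_0} then comes from evaluating the boundary integral \eqref{lambda_j} with the asymptotic expansions of $H$ and of $(\mathscr{L}_{\nabla u}g_b)^\circ$.

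\textbf{Step 1: vanishing of the $\lambda_j$.} By integrability, the tensor $h_{v_i}$ produced by Proposition \ref{bdeform} satisfies $\Psi(h_{v_i})=0$. Then \eqref{decom-obstruction} reduces to
\[\sum_j\lambda_jo_j=o(r_b^{-\beta})\quad\text{on }N^{\delta_i}.\]
Here the coefficients $\lambda_j$ and the tensors $o_j$ do not depend on $i$. Fix a compact set $K'\subset N$ on which the $o_j$ are linearly independent; this is possible because they are linearly independent smooth tensors. Since $\delta_i\to0$, the set $K'$ lies in $N^{\delta_i}$ for large $i$. On $K'$ the weight $r_b^{-\beta}$ is bounded, so the right-hand side tends to $0$ uniformly. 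Letting $i\to\infty$ gives $\sum_j\lambda_jo_j=0$ on $K'$, hence every $\lambda_j=0$.

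\textbf{Step 2: reduce $\lambda_0$ to an integral over $S^{m-1}$.} Take $o_0=(\mathscr{L}_{\nabla u}g_b)^\circ/\|(\mathscr{L}_{\nabla u}g_b)^\circ\|_{L^2(g_b)}$, which is allowed by the preceding proposition. Substitute \eqref{o0expansion} and \eqref{H} into \eqref{lambda_j}. The term $\langle H,o_0\rangle/r$ is homogeneous of degree $2-m-1$ up to errors $O(r^{-m-2+\varepsilon})$. The area of $S_r/\Gamma$ is $r^{m-1}\omega_{m-1}/|\Gamma|$, so the error terms vanish in the limit. The limit therefore equals $|\Gamma|^{-1}$ times an integral over the unit sphere $S^{m-1}$ of a degree-$4$ polynomial contraction.

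For an Einstein metric, decompose
\[R_{ikjl}(0)=W_{ikjl}(0)+\frac{\mu}{m-1}(\delta_{ij}\delta_{kl}-\delta_{il}\delta_{kj}).\]
Since $o_0$ is traceless, every $\delta_{ij}$-part of $H$ drops out. The remaining part of $H$ is
\[-\tfrac13W_{ikjl}(0)x^kx^l+c\,x^ix^j,\qquad c=\frac{\mu}{3(m-1)}-\frac{4\mu}{3(m+2)}.\]
The remaining part of $o_0$ has a radial piece $A(mx^ix^j/r^{m+2}-\delta_{ij}/r^m)$ with $A=-4m|\Gamma|\omega_{m-1}^{-1}\mathcal{V}$, and a Weyl piece $-2mW^\infty_{ikjl}x^kx^l/r^{m+2}$.

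\textbf{Step 3: the four pairings.} I would use the sphere integral
\[\int_{S^{m-1}}x^ax^bx^cx^d=\frac{\omega_{m-1}}{m(m+2)}(\delta_{ab}\delta_{cd}+\delta_{ac}\delta_{bd}+\delta_{ad}\delta_{bc}).\]
\begin{itemize}
\item $W(0)$ against the radial piece vanishes, since $W_{ikjl}x^ix^kx^jx^l=0$ by antisymmetry.
\item $x^ix^j$ against the Weyl piece vanishes for the same reason.
\item $W(0)$ against $W^\infty$ gives, after using tracelessness of both Weyl tensors, exactly $\frac{2m\omega_{m-1}}{3m(m+2)}\bigl(W_{ikjl}(0)W^\infty_{ikjl}+W_{ikjl}(0)W^\infty_{iljk}\bigr)$.
\item $c\,x^ix^j$ against the radial piece gives $cA(m-1)\omega_{m-1}$, which is a multiple of $\mu\mathcal{V}$.
\end{itemize}
Multiplying by $-\frac{m+2}{2}$ and by $|\Gamma|^{-1}$ should give the bracket in \eqref{lambda_0}, with the normalization factor $\|(\mathscr{L}_{\nabla u}g_b)^\circ\|^{-1}$ carried through.

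\textbf{Main obstacle.} The step I expect to be delicate is the constant bookkeeping in Step 3, in particular obtaining the coefficient $2m(m-2)$ of $\mu\mathcal{V}$. This coefficient comes from $c(m-1)$ combined with the factor $A$ and the factor $\frac{m+2}{2}$. I would verify it by collecting everything over the common denominator $(m-1)(m+2)$. I would also double-check the relation $\mathrm{Area}(S^{m-1}/\Gamma)=\omega_{m-1}/|\Gamma|$ in every factor, since it enters both through $A$ and through the boundary measure.
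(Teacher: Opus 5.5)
Your proposal is correct and follows the paper's own argument: the $\lambda_j$ vanish because integrability kills $\Psi(h_{v_i})$ in \eqref{decom-obstruction} (restricting to a fixed compact set just makes the paper's ``forces each $\lambda_j=0$'' explicit), and $\lambda_0$ comes from the same substitution of \eqref{H} and \eqref{o0expansion} into \eqref{lambda_j} with the fourth-moment formula on $S^{m-1}$, where the only cosmetic difference is that you split $R_{ikjl}(0)$ into Weyl and Einstein parts before pairing rather than after, which gives $c=-\frac{(m-2)\mu}{(m-1)(m+2)}$ and hence the coefficient $2m(m-2)$. One small slip: the remainder in $\langle H,o_0\rangle/r$ is $O(r^{-m+\varepsilon})$, not $O(r^{-m-2+\varepsilon})$, but it still vanishes after integrating over $S_r/\Gamma$, whose area is of order $r^{m-1}$.
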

\begin{proof}
    We have seen that each $\lambda_j=0$ in the discussion above. It remains to compute $\lambda_0$ for $o_0=\frac{(\mathscr{L}_{\nabla u}g_b)^\circ}{\|(\mathscr{L}_{\nabla u}g_b)^\circ\|_{L^2(g_b)}}$. Then
    \[\lambda_0=-\frac{m+2}{2\|(\mathscr{L}_{\nabla u}g_b)^\circ\|_{L^2(g_b)}}\lim_{r \to \infty} \int_{S_r / \Gamma}  \frac{\langle H, (\mathscr{L}_{\nabla u}g_b)^\circ \rangle}{r}   d\sigma_{S_r / \Gamma}.\]
    Plug
    \[H_{ij}=- \frac{1}{3} R_{ikjl}(0) x^k x^l - \frac{2\mu}{3(m+2)}\left(|x|^2 \delta_{ij} + 2x^i x^j\right),\]
    \[(\mathscr{L}_{\nabla u}g)^{\circ}_{ij}=-4m\mathrm{Area}_{g_E}\left(S^{m-1}/\Gamma\right)^{-1}\mathcal{V}\left(\frac{mx^ix^j}{r^{m+2}}-\frac{\delta_{ij}}{r^m}\right)-2mW^\infty_{ikjl}\frac{x^kx^l}{r^{m+2}}+O(r^{-m-1+\varepsilon})\]
    into the above expression, we get
    \[-\frac{2}{m+2}\|(\mathscr{L}_{\nabla u}g_b)^\circ\|_{L^2(g_b)}\lambda_0=\frac{4m(m-2)}{m+2}\mu\mathcal{V}+\frac{2m}{3}R_{iajb}(0)W^\infty_{icjd}\int_{S^{m-1}/\Gamma}x^ax^bx^cx^dd\sigma_{S^{m-1}/\Gamma}.\]
    Using the formula 
    \[\int_{S^{m-1}}x^ax^bx^cx^dd\sigma_{S^{m-1}}=\frac{\omega_{m-1}}{m(m+2)}(\delta_{ab}\delta_{cd}+\delta_{ac}\delta_{bd}+\delta_{ad}\delta_{bc}),\]
    (see, for example, {\cite[Lemma 3.3]{MV}},) we finally get
    \[-\frac{2}{m+2}\|(\mathscr{L}_{\nabla u}g_b)^\circ\|_{L^2(g_b)}\lambda_0=\frac{4m(m-2)}{m+2}\mu\mathcal{V}+\frac{2\omega_{m-1}}{3(m+2)|\Gamma|}(R_{ikjl}(0)W^\infty_{ikjl}+R_{ikjl}(0)W^\infty_{iljk}).\]
    Finally, using curvature decomposition and $\Ric(g_0)=\mu g_0$, we have
    \[R_{ijkl}(0)=W_{ijkl}(0)+\frac{\mu}{m-1}(\delta_{ik}\delta_{jl}-\delta_{il}\delta_{jk}),\]
    which leads to our desired expression \eqref{lambda_0}.
\end{proof}

\subsection{Obstruction of integrability}
For a general smooth blow-up limit $(N,g_b)$, we have got in \eqref{decom-obstruction} that on $N^{\delta_i}$,
\[\Psi(h_{v_i})+t_i^2\sum_j\lambda_jo_j=o(t_i^2r_b^{-\beta}).\]
We need to treat $\Psi(h_{v_i})$ for non-integrable directions $v_i$.

For any small $v\in\ker(P_{g_b})=\SO(g_b)$, we consider a curve $\{h_{sv}\}_{s\in[0,1]}$ in $\Psi^{-1}(\coker(P_b))=\Psi^{-1}(\SO(g_b))$, and the derivatives $\left.\frac{d^k}{ds^k}\right|_{s=0}\Psi(h_{sv})$.
\begin{itemize}
    \item For $k=0$, we have $\Psi(0)=\Ric(g_b)+\delta^*_{g_b}B_{g_b}g_b=0$;
    \item For $k=1$, we have $\left.\frac{d}{ds}\right|_{s=0}\Psi(h_{sv})=P_{g_b}(v)=0$;
    \item For larger $k$, there are two cases:\begin{itemize}
        \item either $\left.\frac{d^k}{ds^k}\right|_{s=0}\Psi(h_{sv})=0$ for all $k\geq0$, then by analyticity of $\Psi$, we have $\Psi(h_{sv})\equiv0$;
        \item or there exists $l\geq2$, such that $\left.\frac{d^k}{ds^k} \right|_{s=0}\Psi(h_{sv}) = 0$ for all $k\leqslant l-1$ and $\left. \frac{d^l}{ds^l} \right|_{s=0}\Psi(h_{sv}) \neq  0$.
    \end{itemize}
    The first case is good enough for our application, as we have seen in Section 7.1. Now we treat the second case.
\end{itemize}
Before going ahead, remind our notation $g_{b,sv}=g_b+h_{sv}$.
\begin{lemma}
    Suppose $\left.\frac{d^k}{ds^k} \right|_{s=0}\Psi(h_{sv}) = 0$ for all $k\leqslant l-1$ and $\left. \frac{d^l}{ds^l} \right|_{s=0}\Psi(h_{sv}) \neq  0$.
    \begin{itemize}
    \item[(1)] We have
    \begin{gather}
         \left. \frac{d^{k-1}}{ds^{k-1}} \right|_{s=0} \Ric(g_{b,sv}) = 0, \tag{$A_k$}\\
         B_{g_b} \left( \left. \frac{d^k}{ds^k} \right|_{s=0} \Ric(g_{b,sv}) \right) = 0,\tag{$B_k$}\\
         B_{g_b} \left( \left. \frac{d^k}{ds^k} \right|_{s=0} g_{b,sv} \right) = 0,\tag{$C_k$}
    \end{gather}
    for all \( 1 \leq k \leq l \).

    \item[(2)] We have
    \begin{equation}\label{equ5.3}
        B_{g_b} \left( \left. \frac{d^l}{ds^l} \right|_{s=0} \Psi(h_{sv}) - d\Psi_0 \left( \left. \frac{d^l}{ds^l} \right|_{s=0} h_{sv} \right) \right) = 0.
    \end{equation}

    \item[(3)] We have
    \begin{equation}\label{equ5.4}
        \int_N\left\langle g_b,\left. \frac{d^l}{ds^l} \right|_{s=0} \Psi(h_{sv}) - d\Psi_0 \left( \left. \frac{d^l}{ds^l} \right|_{s=0} h_{sv} \right) \right\rangle_{g_b} dV_{g_b}=0.
    \end{equation}
\end{itemize}
\end{lemma}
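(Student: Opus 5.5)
Writing $\Psi(h)=\Ric(g_b+h)+\delta^*_{g_b}B_{g_b}(g_b+h)$ and $g_{b,sv}=g_b+h_{sv}$, I would prove (1) by induction on $k$, using the contracted Bianchi identity $B_g(\Ric(g))=0$ (i.e.\ $\delta_g\Ric_g+\frac12 d\,\s_g=0$) together with the decay of all terms. Since $\Psi(h_{sv})=\Ric(g_{b,sv})+\delta^*_{g_b}B_{g_b}(g_{b,sv})$ and $B_{g_b}(g_b)=0$, we have $\frac{d^k}{ds^k}\Psi(h_{sv})=\frac{d^k}{ds^k}\Ric(g_{b,sv})+\delta^*_{g_b}B_{g_b}\big(\frac{d^k}{ds^k}g_{b,sv}\big)$. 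For $k=1$: $(A_1)$ is $\Ric(g_b)=0$. $(C_1)$ holds because $\frac{d}{ds}|_0 h_{sv}=v$ by the ``moreover'' part of Lemma \ref{LSreduction}, and $B_{g_b}v=0$ by Proposition \ref{IEDb}. $(B_1)$ follows by differentiating $B_{g_{b,sv}}\Ric(g_{b,sv})\equiv0$ once at $s=0$: the terms where the derivative hits $B$ carry a factor $\Ric(g_b)=0$.

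For the inductive step, assume $(A_j),(B_j),(C_j)$ for $j<k\le l$. Differentiating $B_{g_{b,sv}}\Ric(g_{b,sv})=0$ $k$ times gives, by Leibniz, $B_{g_b}(\Ric^{(k)})$ plus terms $(\partial^{a}B)(\Ric^{(k-a)})$ with $a\ge1$. These vanish once $\Ric^{(j)}=0$ for $j\le k-1$, which is $(A_k)$; hence $(B_k)$ follows from $(A_k)$. To get $(A_k)$ from the previous steps: $\Psi^{(k-1)}=0$ because $k-1\le l-1$, so $\Ric^{(k-1)}=-\delta^*_{g_b}B_{g_b}(g^{(k-1)})=0$ by $(C_{k-1})$. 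For $(C_k)$, apply $B_{g_b}$ to the identity for $\Psi^{(k)}$:
\[
B_{g_b}\Psi^{(k)}=B_{g_b}\Ric^{(k)}+B_{g_b}\delta^*_{g_b}\big(B_{g_b}g^{(k)}\big)=\tfrac12\nabla^*\nabla\big(B_{g_b}g^{(k)}\big),
\]
using $(B_k)$ and the Ricci-flat identity $B\delta^*=\frac12\nabla^*\nabla$ on one-forms. Now $\Psi(h_{sv})\in\SO(g_b)$ for all $s$, so $\Psi^{(k)}\in\SO(g_b)$, and $B_{g_b}$ kills $\SO(g_b)$ by Proposition \ref{IEDb}(2). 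Thus $B_{g_b}g^{(k)}$ is a harmonic one-form decaying at infinity, since $h_{sv}\in C^{2,\alpha}_\beta$ and the map is $C^1$, or analytic, in $s$. By the maximum principle, or by integrating by parts as in Step 1 of Proposition \ref{h2}, it vanishes, giving $(C_k)$.

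For (2), note $d\Psi_0=P_{g_b}$ and $\frac{d^l}{ds^l}\Psi(h_{sv})-d\Psi_0(g^{(l)})$. By $(B_l)$ and $(C_l)$, $B_{g_b}\Psi^{(l)}=0$. Also $B_{g_b}P_{g_b}g^{(l)}=\frac12\nabla^*\nabla B_{g_b}g^{(l)}=0$ by $(C_l)$, using the identity from Step 1 of Proposition \ref{h2}. Hence the difference is annihilated by $B_{g_b}$.

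For (3), I would take the trace. The expression $E:=\Psi^{(l)}-P_{g_b}g^{(l)}$ is the $l$-th derivative of the nonlinear part, so it is a sum of products of lower derivatives $g^{(j)}$, $1\le j<l$, with at least two factors. The plan is to show $\tr_{g_b}E$ is a divergence of a sufficiently decaying one-form, whence its integral vanishes by Stokes. Concretely, $\tr\Psi^{(l)}$ relates to $\s^{(l)}$ plus $\tr\,\delta^*B$, which is $-\delta(B g^{(l)})=0$ by $(C_l)$. Differentiating $\s=\tr_g\Ric$, and using $\Ric^{(j)}=0$ for $j<l$, gives $\tr\Ric^{(l)}=\s^{(l)}$. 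Meanwhile $\tr P_{g_b}g^{(l)}=\frac12\Delta\tr g^{(l)}$ integrates to a boundary term. So (3) reduces to $\int_N\s^{(l)}\,dV=0$.

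The main obstacle is exactly this step. I would handle it with the first-variation identity for total scalar curvature applied along the curve, where all lower Ricci derivatives vanish, together with the $O(r^{-m})$ decay of the $g^{(j)}$ from Proposition \ref{IEDb}(3) to kill the boundary terms. Specifically, $\s^{(l)}$ equals $d\s_{g_b}(g^{(l)})=\delta^2g^{(l)}-\Delta\tr g^{(l)}$, a divergence, plus terms involving $\Ric^{(j)}$ with $j<l$ that vanish. Showing that the decay is enough for the boundary integrals to vanish is the delicate part.
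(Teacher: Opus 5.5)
Your parts (1) and (2) are correct and follow the paper's induction. In (2) you use $B_{g_b}P_{g_b}=\frac12\nabla^*\nabla B_{g_b}$ together with $(C_l)$ instead of $B_{g_b}\circ d\Ric_{g_b}=0$, which is equivalent.

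\textbf{The gap is in (3).} Write $g^{(j)}=\left.\frac{d^j}{ds^j}\right|_{s=0}g_{b,sv}$ and $\Omega_R=\{r_b\le R\}$. By $(A_k)$ and $(C_l)$ one has $\s^{(l)}=\tr_{g_b}\Psi^{(l)}$. Moreover $\Psi^{(l)}\in\SO(g_b)$ is traceless by Proposition \ref{IEDb}(1), so $\s^{(l)}\equiv 0$ pointwise. Hence $\int_{\Omega_R}\tr E=-\int_{\Omega_R}\tr P_{g_b}g^{(l)}$, which is a constant multiple of the flux $\int_{\partial\Omega_R}\partial_\nu\tr g^{(l)}\,d\sigma$. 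So (3) is \emph{equivalent} to this linear flux tending to $0$.

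Your reduction ``(3) reduces to $\int_N\s^{(l)}dV=0$'' discards exactly this term. Your follow-up step, writing $\s^{(l)}=d\s_{g_b}(g^{(l)})$ up to vanishing terms and then invoking decay, needs the same flux to vanish again. Decay does not give it:
\begin{itemize}
\item The $O(r^{-m})$ bound of Proposition \ref{IEDb}(3) holds only for elements of $\SO(g_b)$, i.e.\ for $g^{(1)}=v$.
\item For $j\ge 2$ you only know $g^{(j)}=O(r^{2-m+\varepsilon})$, so the flux is merely $O(r^{\varepsilon})$.
\item Since $\frac12\nabla^*\nabla\tr g^{(l)}=-\tr E$, the limit of the flux is proportional to $\int_N\tr E$. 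The argument is therefore circular.
\end{itemize}
Also, the pointwise identity ``$\s^{(l)}=d\s_{g_b}(g^{(l)})$ modulo $\Ric^{(j)}$-terms'' is false. The difference is $\tr E$ itself, which need not vanish pointwise; for $l=2$ it equals $D^2\s_{g_b}(v,v)$.

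\textbf{The missing idea.} The paper applies the first variation of $\int_{\Omega_R}\s_{g_{b,sv}}\,dV_{g_{b,sv}}$ at \emph{every} $s$. It keeps the full nonlinear boundary term $-\int_{\partial\Omega_R}(\delta_{g_{b,sv}}+d\tr_{g_{b,sv}})\bigl(\tfrac{d}{ds}g_{b,sv}\bigr)(\nu_s)\,d\sigma$ and differentiates $l-1$ times at $s=0$. Then:
\begin{itemize}
\item The bulk term dies by $(A_k)$ for $k\le l$.
\item The linear part of the resulting boundary integral is exactly $-\int_{\partial\Omega_R}(\delta_{g_b}+d\tr_{g_b})(g^{(l)})(\nu)\,d\sigma$, which is the boundary term of $\int_{\Omega_R}d\s_{g_b}(g^{(l)})$.
\item So in $I-II$ the non-decaying linear fluxes cancel identically. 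Only terms at least quadratic in the $\frac{d^j}{ds^j}h_{sv}$ survive.
\item These are $O(r^{3-2m+2\varepsilon})$, so they vanish after integration over $\{r_b=R\}$ as $R\to\infty$.
\end{itemize}
This is precisely your stated plan, that $\tr E$ is the divergence of a sufficiently decaying one-form. That one-form must be the \emph{quadratic} one; your concrete execution replaced it with the linear one-form, which does not decay fast enough.
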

\begin{proof}
    (1) We prove the equations by induction.

    \textbf{Step 1: Suppose \( (A_j) \) holds for all \( j \leq k \), we prove \( (B_k) \).}

    The contracted second Bianchi identity imply $B_{g_{b,sv}}\Ric(g_{b,sv}) = 0$, then we apply \( \left. \frac{d^k}{ds^k} \right|_{s=0} \) to this equation. By \( (A_j) \) for \( j \leq k \), we get:
    \[
        0 = \left. \frac{d^k}{ds^k} \right|_{s=0} B_{g_{b,sv}}\Ric(g_{b,sv}) = B_{g_b} \left( \left. \frac{d^k}{ds^k} \right|_{s=0} \Ric(g_{b,sv}) \right),
    \]
    which is \( (B_k) \).
    
    \textbf{Step 2: \( (B_k) \implies (C_k) \).}
    
    Since \( \Psi(h_{sv}) \in \mathcal{O}(g_b) \), we have \( B_{g_b} \Psi(h_{sv}) = 0 \) by Lemma \ref{IEDb}. Differentiate \( k \) times:
    \[
        0 = \left. \frac{d^k}{ds^k} \right|_{s=0} B_{g_b} \Psi(h_{sv}) = B_{g_b} \left( \left. \frac{d^k}{ds^k} \right|_{s=0}  \Ric(g_{b,sv}) + \delta^*_{g_b} B_{g_b}\left( \left. \frac{d^k}{ds^k} \right|_{s=0}g_{b,sv} \right) \right).
    \]
    By \( (B_k) \), the first term vanishes, hence \( B_{g_b} \delta^*_{g_b} B_{g_b} \left( \left. \frac{d^k}{ds^k} \right|_{s=0} g_s \right) = 0 \). By Ricci identity, \( B_{g_b} \delta^*_{g_b} =\nabla^*_{g_b}\nabla_{g_b}\). An integration by parts shows that \( (C_k) \) holds.
    
    \textbf{Step 3: \( (C_k) \implies (A_{k+1}) \) for \( k \leq l - 1 \).}
    
    By assumption:
    \[
        0 = \left. \frac{d^k}{ds^k} \right|_{s=0} \Psi(h_{sv}) = \left. \frac{d^k}{ds^k} \right|_{s=0} \Ric(g_{b,sv}) + \delta^*_{g_b} B_{g_b} \left( \left. \frac{d^k}{ds^k} \right|_{s=0} g_{b,sv} \right).
    \]
    By \( (C_k) \), the second term vanishes, so \( (A_{k+1}) \) holds.
    
    \textbf{Step 4: Induction process.}
    
    As a base case, \( \Ric(g_b) = 0 \), i.e. \( (A_1) \) holds. By Steps 1-3, \( (A_k), (B_k), (C_k) \) hold for \( 1 \leq k \leq l \). Note that step 3 needs $k\leq l - 1$, which is the only place we use the assumption. So the induction stops at \( k = l \).

    (2) For $B_{g_b} \left( \left. \frac{d^l}{ds^l} \right|_{s=0} \Psi(h_{sv}) - d\Psi_0 \left( \left. \frac{d^l}{ds^l} \right|_{s=0} h_{sv} \right) \right)$, we first note that
    $$\left. \frac{d^l}{ds^l} \right|_{s=0} \Psi(h_{sv}) - d\Psi_0 \left( \left. \frac{d^l}{ds^l} \right|_{s=0} h_{sv} \right)=\left. \frac{d^l}{ds^l} \right|_{s=0} \Ric(g_{b,sv}) - d\Ric_{g_b} \left( \left. \frac{d^l}{ds^l} \right|_{s=0} g_{b,sv} \right).$$
    By ($C_l$) in (1), we know that the first term lies in $\ker(B_{g_b})$. For the second term, we differentiate the Bianchi identity $B_{g_{b,sv}}\Ric(g_{b,sv})=0$ once, and get
    $$0=\left.\frac{d}{ds}\right|_{s=0}B_{g_{b,sv}}\Ric(g_{b,sv})=\left.\frac{d B_{g}}{dg}\right|_{g_b}\Ric(g_b)+B_{g_b}\circ d\Ric_{g_b}=B_{g_b}\circ d\Ric_{g_b}.$$

    (3) First note that
    \[\left. \frac{d^l}{ds^l} \right|_{s=0} \Psi(h_{sv}) - d\Psi_0 \left( \left. \frac{d^l}{ds^l} \right|_{s=0} h_{sv} \right)=\left. \frac{d^l}{ds^l} \right|_{s=0} \Ric(g_{b,sv}) - d\Ric_{g_b} \left( \left. \frac{d^l}{ds^l} \right|_{s=0} g_{b,sv} \right),\]
    so the desired integral is divided into two parts:
    \begin{align*}
        I=\int_N\left\langle g_b,\left. \frac{d^l}{ds^l} \right|_{s=0} \Ric(g_{b,sv})\right\rangle_{g_b}dV_{g_b},\\
        II=\int_N\left\langle g_b, d\Ric_{g_b} \left( \left. \frac{d^l}{ds^l} \right|_{s=0} g_{b,sv} \right)\right\rangle_{g_b}dV_{g_b}.
    \end{align*}

    For II, since $g_b$ is Ricci flat, $\left\langle g_b, d\Ric_{g_b} \left( \left. \frac{d^l}{ds^l} \right|_{s=0} g_{b,sv} \right)\right\rangle_{g_b}=d\,\mathrm{scal}_{g_b}\left(\left. \frac{d^l}{ds^l} \right|_{s=0} g_{b,sv}\right)$. By the variation formula of scalar curvature, and integration by parts,
    \[II=-\lim_{R\to\infty}\int_{r=R}(\delta_{g_b}+d\,\mathrm{tr}_{g_b})\left( \left. \frac{d^l}{ds^l} \right|_{s=0} g_{b,sv} \right)(\nu)d\sigma.\]

    For I, by $(A_k)$ in (1) for $1\le k\le l$, we get
    \[\left. \frac{d^l}{ds^l} \right|_{s=0}\mathrm{scal}_{g_{b,sv}}dV_{g_{b,sv}}=\left\langle g_b,\left. \frac{d^l}{ds^l} \right|_{s=0} \Ric(g_{b,sv})\right\rangle_{g_b}dV_{g_b},\]
    hence
    \[I=\left. \frac{d^l}{ds^l} \right|_{s=0}\int_N\mathrm{scal}_{g_{b,sv}}dV_{g_{b,sv}}.\]
    By the variation formula of the Einstein-Hilbert functional,
    \begin{align*}
        \frac{d}{ds}\int_N\mathrm{scal}_{g_{b,sv}}dV_{g_{b,sv}}=&-\int_N\left\langle\Ric(g_{b,sv})-\frac{\mathrm{scal}_{g_{b,sv}}}{2}g_{b,sv},\frac{d}{ds}g_{b,sv}\right\rangle_{g_{b,sv}}dV_{g_{b,sv}}\\
        &-\lim_{R\to\infty}\int_{r=R}(\delta_{g_{b,sv}}+d\,\mathrm{tr}_{g_{b,sv}})\left(\frac{d}{ds}g_{b,sv}\right)(\nu_s)d\sigma.
    \end{align*}
    Taking $l-1$ th derivative of $s$, and evaluate at $s=0$, the first term vanishes since it involves at most $l-1$ th derivatives of $\Ric(g_{b,sv})$. 

    In sum, the desired integral is
    \begin{align*}
        I-II=&\lim_{R\to\infty}\int_{r=R}(\delta_{g_b}+d\,\mathrm{tr}_{g_b})\left( \left. \frac{d^l}{ds^l} \right|_{s=0} g_{b,sv} \right)(\nu)d\sigma\\
        &-\lim_{R\to\infty}\left. \frac{d^{l-1}}{ds^{l-1}} \right|_{s=0}\int_{r=R}(\delta_{g_{b,sv}}+d\,\mathrm{tr}_{g_{b,sv}})\left(\frac{d}{ds}g_{b,sv}\right)(\nu_s)d\sigma.
    \end{align*}
    The boundary terms involves quadratic and higher degree polynomials of $\left. \frac{d^k}{ds^k} \right|_{s=0}h_{sv}$, and as pointed out in Remark \ref{rmk4.5}, $h_{sv}=O(r_b^{2-m+\varepsilon})$, hence the boundary terms vanish as $R\to\infty$.
\end{proof}
\begin{proposition}
    Suppose $\left.\frac{d^k}{ds^k} \right|_{s=0}\Psi(h_{sv}) = 0$ for all $k\leqslant l-1$ and $\left. \frac{d^l}{ds^l} \right|_{s=0}\Psi(h_{sv}) \neq  0$. Then
    \begin{equation}\label{equ5.5}
        \int_N\left\langle (\mathscr{L}_{\nabla u}g_b)^\circ,\left. \frac{d^l}{ds^l} \right|_{s=0} \Psi(h_{sv})  \right\rangle_{g_b} dV_{g_b}=0.
    \end{equation}
\end{proposition}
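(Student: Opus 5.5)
Write $T:=\left.\frac{d^l}{ds^l}\right|_{s=0}\Psi(h_{sv})$ and $k:=\left.\frac{d^l}{ds^l}\right|_{s=0}h_{sv}$. Set $E:=T-d\Psi_0(k)=T-P_{g_b}k$. By the preceding lemma, $B_{g_b}E=0$ by \eqref{equ5.3}, and $\int_N\langle g_b,E\rangle\,dV_{g_b}=0$ by \eqref{equ5.4}. Since $o:=(\mathscr{L}_{\nabla u}g_b)^\circ=2\operatorname{Hess}_{g_b}u-4g_b=2\delta^*_{g_b}du-4g_b$, the pairing in \eqref{equ5.5} splits as
\[
\int_N\langle o,T\rangle=\int_N\langle o,P_{g_b}k\rangle+2\int_N\langle\delta^*_{g_b}du,E\rangle-4\int_N\langle g_b,E\rangle .
\]
The last term vanishes by \eqref{equ5.4}, so it remains to show that the other two terms vanish.

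\textbf{First term.} Since $P_{g_b}$ is formally self-adjoint and $P_{g_b}o=0$, integration by parts over $\{r_b\le R\}$ leaves only boundary terms of the form $\int_{r=R}(\langle o,\nabla_\nu k\rangle-\langle\nabla_\nu o,k\rangle)\,d\sigma$. We have $o=O(r^{-m})$ by \eqref{o0expansion}, and $k=O(r^{2-m+\varepsilon})$: indeed $h_{sv}$ is a $C^1$ (in fact analytic, in the sense already used in the case split) family in $C^{2,\alpha}_{m-2-\varepsilon}$ by Remark \ref{rmk4.5}, and I would take the $s$-derivatives in that weighted space. The boundary integrand is therefore $O(r^{2-2m+\varepsilon-1})$ against area $r^{m-1}$, which tends to $0$.

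\textbf{Second term.} Integrate by parts: $\int\langle\delta^*du,E\rangle=\int\langle du,\delta_{g_b}E\rangle+\text{boundary}$. Using $B_{g_b}E=0$, i.e.\ $\delta E=-\tfrac12 d\,\mathrm{tr}E$, this becomes $-\tfrac12\int\langle du,d\,\mathrm{tr}E\rangle$. A second integration by parts gives $-\tfrac12\int(\Delta u)\,\mathrm{tr}E$ plus boundary, and since $\Delta u=2m$ this equals $-m\int\langle g_b,E\rangle=0$, again by \eqref{equ5.4}.

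\textbf{Main obstacle: the boundary terms.} Here $du\sim 2x$ grows like $r$, so the boundary contributions are of the form $\int_{r=R}E(\nabla u,\nu)$ and $\int_{r=R}u\,\partial_\nu\mathrm{tr}E$, $\int_{r=R}\mathrm{tr}E\,\partial_\nu u$. These require decay of $E$ better than $r^{-m}$. The point is that $E$ is the $l$-th $s$-derivative of the nonlinear part $\Ric(g_{b,sv})-d\Ric_{g_b}(\cdot)$, so it is a sum of terms at least quadratic in the derivatives $\left.\frac{d^j}{ds^j}\right|_{s=0}h_{sv}$, each $O(r^{2-m+\varepsilon})$. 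This gives $E=O(r^{2-2m-2+2\varepsilon})$ together with its derivatives. The boundary terms are then bounded by $R^{2}\cdot R^{-2m+2\varepsilon}\cdot R^{m-1}\to 0$ for $m\ge4$ (the worst term $u\,\partial_\nu\mathrm{tr}E$ has $u\sim R^2$, $\partial_\nu\mathrm{tr}E\sim R^{-2m-1+2\varepsilon}$). I would first verify this quadratic-decay claim for $E$. If necessary, I would sharpen the decay of the first derivative $v$ using Proposition \ref{IEDb}(3), which gives $O(r^{-m})$. Combining the three terms yields \eqref{equ5.5}.
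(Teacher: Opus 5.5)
Your proposal is correct and follows essentially the paper's argument. The $P_{g_b}k$ part vanishes because $(\mathscr{L}_{\nabla u}g_b)^\circ\in\ker P_{g_b}=\coker P_{g_b}$, and two integrations by parts against $2\operatorname{Hess}_{g_b}u-4g_b$ (using $B_{g_b}E=0$ and $\Delta_{g_b}u=2m$) reduce the rest to a multiple of $\int_N\mathrm{tr}_{g_b}E\,dV_{g_b}=0$; your boundary analysis makes explicit what the paper drops silently. There are two harmless slips. First, a quadratic term such as $h*\nabla^2h$ with $h=O(r^{2-m+\varepsilon})$ only gives $E=O(r^{2-2m+2\varepsilon})$, not $O(r^{-2m+2\varepsilon})$, but every boundary term is still $O(R^{2-m+2\varepsilon})\to0$. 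Second, Green's formula yields $+\tfrac12\int_N(\Delta_{g_b}u)\,\mathrm{tr}_{g_b}E$ rather than $-\tfrac12$, which does not matter since that integral vanishes.
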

\begin{proof}
    For simplicity of notations, we denote
    \[S^l:=\left. \frac{d^l}{ds^l} \right|_{s=0} \Psi(h_{sv}) - d\Psi_0 \left( \left. \frac{d^l}{ds^l} \right|_{s=0} h_{sv} \right).\]
    Since $(\mathscr{L}_{\nabla u}g_b)^\circ\in\coker(P_{g_b})$, $d\Psi_0 \left( \left. \frac{d^l}{ds^l} \right|_{s=0} h_{sv} \right)\in\mathrm{Im}(P_{g_b})$. So the desired integral 
    \[\int_N\left\langle (\mathscr{L}_{\nabla u}g_b)^\circ,\left. \frac{d^l}{ds^l} \right|_{s=0} \Psi(h_{sv})  \right\rangle_{g_b} dV_{g_b}=\int_N\left\langle (\mathscr{L}_{\nabla u}g_b)^\circ, S^l  \right\rangle_{g_b} dV_{g_b}.\]
    By \eqref{equ5.3}, we know $\delta_{g_b}S^l=-\frac{1}{2}d\tr_{g_b}S^l$. By \eqref{equ5.4}, we have $\int_N\langle g_b,S^l\rangle_{g_b}dV_{g_b}=0$, hence
    \begin{align*}
        \int_N\left\langle (\mathscr{L}_{\nabla u}g_b)^\circ, S^l  \right\rangle_{g_b} dV_{g_b}&=\int_N\left\langle (\mathscr{L}_{\nabla u}g_b), S^l  \right\rangle_{g_b} dV_{g_b}\\
        &=2\int_N\left\langle \delta^*_{g_b}(\nabla u), S^l  \right\rangle_{g_b} dV_{g_b}\\
        &=2\lim_{R\to\infty}\int_{r=R}S^l(\nu,\nabla u)d\sigma+2\int_N\left\langle \nabla u, \delta_{g_b}S^l  \right\rangle_{g_b} dV_{g_b}\\
        &=-\int_N\langle \nabla u,d\tr_{g_b}S^l\rangle_{g_b}\\
        &=-\lim_{R\to\infty}\int_{r=R}(\nabla_{\nu}u)(\tr_{g_b}S^l)d\sigma+\int_N(\Delta_{g_b}u)(\tr_{g_b}S^l)dV_{g_b}\\
        &=2m\int_N\langle g_b,S^l\rangle_{g_b}dV_{g_b}\\
        &=0.
    \end{align*}
\end{proof}

\subsection{Proof of Theorem \ref{Maintheorem}}
Recall \eqref{decom-obstruction} that, on $N^{\delta_i}$, we have
\begin{equation}\label{equ5.6}
    \Psi(h_{v_i})+t_i^2\sum_j\lambda_jo_j=o(t_i^2r_b^{-\beta}).
\end{equation}
We write $\Psi(h_{v_i}) = V_i + O(\|v_i\|_{L^2(g_b)} \|V_i\|_{L^2(g_b)})$, where $V_i$ is the leading term
$\frac{1}{l!} \frac{d^l}{ds^l}\bigg|_{s=0} \Psi(h_{s v_i})$. After passing to a subsequence, we assume that for all $i$, the leading order $l$ in the Taylor expansion of $\Psi(h_{v_i})$ are the same. By \eqref{equ5.5}, we can take an orthonormal basis of $\SO(g_b)$ such that
\[
o_0 = \frac{(\mathscr{L}_{\nabla u}g_b)^\circ}{\|(\mathscr{L}_{\nabla u}g_b)^\circ\|_{L^2(g_b)}}, \quad o_1 = \frac{V_i}{\|V_i\|_{L^2(g_b)}}.
\]

\textbf{Step 1: Estimate $\|V_i\|_{L^2(g_b)}$.}

Take a cutoff function $\chi_i$ such that $\text{supp}(\chi_i) \subset N^{\delta_i} = \{ r_b \leq \frac{1}{4} t_i^{-\frac{1}{2}} \}$, and $\chi_i\equiv1$ on $\{r_b \leq \frac{1}{8} t_i^{-\frac{1}{2}}\}$. By \eqref{equ5.6}, we have
\begin{equation}\label{equ5.7}
\begin{aligned}
    0 &= \int_N \left\langle \Psi(h_{v_i}) + t_i^2 \sum_j \lambda_j o_j+o(t_i^2r_b^{-\beta}), \chi_i o_1 \right\rangle_{g_b} dV_{g_b}  \\
    &= \int_N \left\langle \Psi(h_{v_i}), \chi_i o_1 \right\rangle_{g_b} dV_{g_b} + t_i^2 \int_N \left\langle \sum_j \lambda_j o_j, \chi_i o_1 \right\rangle_{g_b} dV_{g_b} + o(t_i^{2})\\
    &=:A+B+o(t_i^2).
\end{aligned}
\end{equation}
Here we have used the pointwise decay $o_1=O(r_b^{-m})$ from Proposition \ref{IEDb}(3), to ensure that the integral of error term is indeed $o(t_i^2)$.

For the first term $A=\int_N \left\langle \Psi(h_{v_i}), \chi_i o_1 \right\rangle_{g_b} dV_{g_b}$, we have
\begin{align*}
A_1 &= \int_N \left\langle \Psi(h_{v_i}), o_1 \right\rangle_{g_b} dV_{g_b}\\
&= \int_N \left\langle V_i + O(\|v_i\|_{L^2(g_b)} \|V_i\|_{L^2(g_b)}), \frac{V_i}{\|V_i\|_{L^2(g_b)}} \right\rangle_{g_b} dV_{g_b} \\
&= \|V_i\|_{L^2(g_b)} + O(\|v_i\|_{L^2(g_b)} \|V_i\|_{L^2(g_b)}),
\end{align*}
and 
\begin{align*}
A_2 &= \int_N \left\langle \Psi(h_{v_i}), (1-\chi_i) o_1 \right\rangle_{g_b} dV_{g_b}  \\
&\leq C \int_{\{r_b \geq \frac{1}{8} t_i^{-\frac{1}{2}}\}} \left\langle \|V_i\|_{L^2(g_b)} o_1, o_1 \right\rangle_{g_b} dV_{g_b} \\
&= C \int_{\frac{1}{8} t_i^{-\frac{1}{2}}}^{+\infty} \|V_i\|_{L^2(g_b)} r^{-2m} r^{m-1} dr \\
&= O\left(t_i^{\frac{m}{2}} \|V_i\|_{L^2(g_b)}\right).
\end{align*}
Here we have used the pointwise decay $o_1=O(r_b^{-m})$ from Proposition \ref{IEDb}(3).

Subtract $A_2$ from $A_1$, we get
\[
A = A_1 - A_2 = \|V_i\|_{L^2(g_b)} + O(\|v_i\|_{L^2(g_b)} \|V_i\|_{L^2(g_b)}) + O\left(t_i^{\frac{m}{2}} \|V_i\|_{L^2(g_b)}\right).
\]

For the second term $B = t_i^2 \int_N \left\langle \sum_j \lambda_j o_j, \chi_i o_1 \right\rangle_{g_b} dV_{g_b}$, we have
\begin{align*}
B &= t_i^2 \int_N \left\langle \sum_j \lambda_j o_j, o_1 \right\rangle_{g_b} dV_{g_b} - t_i^2 \int_{\{r_b \geq \frac{1}{8} t_i^{-\frac{1}{2}}\}} \left\langle \sum_j \lambda_j o_j, (1-\chi_i) o_1 \right\rangle_{g_b} dV_{g_b} \\
&= t_i^2 \lambda_1 + o(t_i^2).
\end{align*}

Summing up, \eqref{equ5.7} is equivalent to
\[
o(t_i^2) = A + B = \|V_i\|_{L^2(g_b)} + O(\|v_i\|_{L^2(g_b)} \|V_i\|_{L^2(g_b)}) + O\left(t_i^{\frac{m}{2}} \|V_i\|_{L^2(g_b)}\right) + t_i^2 \lambda_1 + o(t_i^2).
\]
First of all, we can see $\|V_i\|_{L^2(g_b)} = o(1)$. Then
\[
\|V_i\|_{L^2(g_b)} = -t_i^2 \lambda_1 + o(\|V_i\|_{L^2(g_b)}) + o\left(t_i^{\frac{m}{2}}\right) + o(t_i^2),
\]
which implies $\|V_i\|_{L^2(g_b)} = O(t_i^2)$.

\textbf{Step 2: Pair \eqref{equ5.6} with $o_0$.}

Now we have $\Psi(h_{v_i}) = V_i + O(\|v_i\|_{L^2(g_b)} \|V_i\|_{L^2(g_b)}) = V_i + O(t_i^2 \|v_i\|_{L^2(g_b)})$. Then \eqref{equ5.6} reads as
\[V_i + t_i^2 \sum_j \lambda_j o_j = O(t_i^2 \|v_i\|_{L^2(g_b)}) + o(t_i^2 r_b^{-\beta}) \quad \text{on } \{ r_b \leq \frac{1}{4} t_i^{-\frac{1}{2}} \}.\]
Then
\begin{equation}\label{equ5.8}
\begin{aligned}
    \int_N \left\langle V_i + t_i^2 \sum_j \lambda_j o_j, \chi_i o_0 \right\rangle_{g_b} dV_{g_b} &= \int_N \left\langle O(t_i^2 \|v_i\|_{L^2(g_b)}) + o(t_i^2 r_b^{-\beta}), \chi_i o_0 \right\rangle_{g_b} dV_{g_b} \\
    &= O(t_i^2 \|v_i\|_{L^2(g_b)}) + o(t_i^2) \\
    &= o(t_i^2).
\end{aligned}
\end{equation}
Here we have used the pointwise decay $o_1=O(r_b^{-m})$ from Proposition \ref{IEDb}(3).

Again we divide the left-hand side into two parts: for
\[
A' = \int_N \left\langle V_i, \chi_i o_0 \right\rangle_{g_b} dV_{g_b},
\]
we compute $A_1' = \int_N \left\langle V_i, o_0 \right\rangle_{g_b} dV_{g_b} = 0$,
\[
A_2' = \int_N \left\langle V_i, (1-\chi_i) o_0 \right\rangle_{g_b} dV_{g_b} \leq C \int_{\frac{1}{8} t_i^{-\frac{1}{2}}}^{+\infty} \|V_i\|_{L^2(g_b)} r^{-2m} r^{m-1} dr = O\left(t_i^{2+\frac{m}{2}}\right).
\]
Hence $A = A_1' - A_2' = O\left(t_i^{2+\frac{m}{2}}\right)$.

For $B' = t_i^2 \int_N \left\langle \sum_j \lambda_j o_j, \chi_i o_0 \right\rangle_{g_b} dV_{g_b}$,
\begin{align*}
B' &= t_i^2 \int_N \left\langle \sum_j \lambda_j o_j, o_0 \right\rangle_{g_b} dV_{g_b} - t_i^2 \int_{\{r_b \geq \frac{1}{8} t_i^{-\frac{1}{2}}\}} \left\langle \sum_j \lambda_j o_j, (1-\chi_i) o_0 \right\rangle_{g_b} dV_{g_b} \\
&= t_i^2 \lambda_0 + o(t_i^2).
\end{align*}

Summing up, \eqref{equ5.8} is equivalent to 
\[
o(t_i^2) = A' + B' = O\left(t_i^{2+\frac{m}{2}}\right) + t_i^2 \lambda_0 + o(t_i^2),
\]
for the sequence $t_i \to 0$. This forces $\lambda_0 = 0$.

By the expression of $\lambda_0$ (computed in \eqref{lambda_0}), $\lambda_0=0$ is equivalent to
\[
\mu\mathcal{V}+\frac{\omega_{m-1}}{6m(m-2)|\Gamma|}(W_{ikjl}(0)W^\infty_{ikjl}+W_{ikjl}(0)W^\infty_{iljk})=0,
\]

So our main theorem holds when $(M_0,g_0)$ has a single orbifold point $p$, and blow-up limit at $p$ is smooth. In general case, when $(M,g_b)$ has more than one orbifold point, we perform the gluing construction of blow-up limit tree on every orbifold point. Then we concentrate on the singular point $p$ with smooth blow-up limit, and we get the same conclusion at $p$.

\subsection{Discussion on explicit examples}
\subsubsection{Hyperbolic orbifold with $\Z_2$-singularities}
Suppose $(M_0^m,g_0)$ is a compact hyperbolic orbifold with $\Z_2$-singularities. At each singular point modelled on $\R^m/\Z_2$, the blow-up limit is also asymptotic to $\mathbb{R}^m/{\mathbb{Z}_2}$.
\begin{proposition}
    Any non-flat Ricci-flat ALE orbifold $(N,g_b)$ asymptotic to $\mathbb{R}^m/{\mathbb{Z}_2}$ must be smooth.
\end{proposition}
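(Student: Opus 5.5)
The plan is to argue by contradiction, using a volume-ratio (Bishop–Gromov) argument on the Ricci-flat orbifold $(N,g_b)$. Suppose $(N,g_b)$ is a non-flat Ricci-flat ALE orbifold asymptotic to $\R^m/\Z_2$, and suppose it has a singular point $q$ modelled on $\R^m/\Gamma_q$ with $\Gamma_q\neq\{1\}$ a finite subgroup of $\mathrm{O}(m)$ acting freely on $S^{m-1}$.

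First we compare the local and asymptotic volume ratios. Bishop–Gromov monotonicity holds on Ricci-flat orbifolds with isolated singularities, since the singular set has codimension $m\ge4$ and minimizing geodesics avoid it in the relevant sense. Hence
\[
\theta(r):=\frac{\V_{g_b}(B(q,r))}{\V_{g_E}(B^m(0,r))}
\]
is non-increasing in $r$. As $r\to0$, because $q$ is modelled on $\R^m/\Gamma_q$ with the metric asymptotically Euclidean in orbifold charts, $\theta(r)\to|\Gamma_q|^{-1}$. As $r\to\infty$, by the ALE condition $\theta(r)\to|\Z_2|^{-1}=\frac12$. Monotonicity therefore gives $\frac12\le|\Gamma_q|^{-1}$, that is, $|\Gamma_q|\le2$. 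Since $\Gamma_q$ is nontrivial, $|\Gamma_q|=2$, so $\theta$ is constant.

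Next we apply the rigidity case of Bishop–Gromov. Constancy of $\theta$ forces $B(q,r)$ to be a metric cone for every $r$, so $(N,g_b)$ is isometric to the flat cone $\R^m/\Gamma_q$. This contradicts the assumption that $(N,g_b)$ is non-flat. Consequently $(N,g_b)$ has no singular points, i.e.\ it is smooth.

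The main obstacle is justifying the orbifold version of Bishop–Gromov, both the monotonicity and the equality case, at the orbifold point $q$ itself. To handle it, I would lift to the local uniformizing chart to compute the small-$r$ limit. For monotonicity and rigidity, I would rely on the fact that an orbifold with isolated singularities and $\Ric\ge0$ is a non-collapsed Ricci limit space (or an $\mathrm{RCD}(0,m)$ space), where volume cone implies metric cone. Alternatively, one could note directly that the smooth part is geodesically convex up to measure zero, because points are not conjugate through codimension-$m$ singularities.

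A secondary point is that the ALE asymptotic volume ratio equals $|\Gamma|^{-1}$ with $\Gamma=\Z_2$. This follows from Theorem \ref{ALEcoor}(2): the renormalized volume $\mathcal V$ is finite, so $\V(\Omega_r)/\V_{g_E}(B^m_r)\to\frac12$, and $d(q,\cdot)$ and $r_b$ differ by a bounded amount.
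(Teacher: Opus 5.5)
Your proposal is correct and matches the paper's argument: you compare the small-scale volume ratio $|\Gamma_q|^{-1}$ at the singular point with the asymptotic ratio $\tfrac12$ via Bishop--Gromov, and use non-flatness to rule out equality. The paper simply asserts strict monotonicity for a non-flat space, whereas you split this into monotonicity plus the rigidity (equality) case, and you also justify the orbifold version of Bishop--Gromov, which the paper takes for granted.
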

\begin{proof}
    Suppose not, there is an orbifold point $p \in N$ modelled on $\mathbb{R}^m/{\Gamma}$ for some $|\Gamma| \geq 2$. Then
    \[
    \lim_{r \to 0} \frac{\text{Vol}_{g_b}(B(p,r))}{r^m} = \frac{\text{Vol}_{g_E}(B(0,1))}{|\Gamma|},
    \]
    \[
    \lim_{r \to \infty} \frac{\text{Vol}_{g_b}(B(p,r))}{r^m} = \frac{\text{Vol}_{g_E}(B(0,1))}{2}.
    \]
    Since $(N,g_b)$ is non-flat, we have a strict inequality in Bishop-Gromov's volume comparison:
    \[
    \lim_{r \to 0} \frac{\text{Vol}_{g_b}(B(p,r))}{r^m} > \lim_{r \to \infty} \frac{\text{Vol}_{g_b}(B(p,r))}{r^m},
    \]
    which implies $|\Gamma| < 2$, contradiction.
\end{proof}

\begin{proof}[Proof of Theorem \ref{sphere}]
    Suppose $(M_0^m,g_0)$ admits a resolution sequence in $\M(m,\Lambda,D,V,E)$, then the blow-up limit at each orbifold point is smooth, with renormalized volume $\mathcal{V}<0$. For $(M_0,g_0)$, we have $\mu=-(m-1)$ and $W(0)=0$, hence
    \[\mu\mathcal{V}+\frac{\omega_{m-1}}{6m(m-2)|\Gamma|}(W_{ikjl}(0)W^\infty_{ikjl}+W_{ikjl}(0)W^\infty_{iljk})=-(m-1)\mathcal{V}\neq 0,\]
    which contradicts with Theorem \ref{Maintheorem}.
\end{proof}

\subsubsection{The Calabi ansatz}
In \cite{MV}, the authors considered the special cases that $(N,g_b)=(O_{\CP^{n-1}}(-n),g_{Cal})$, $n>2$. They showed in {\cite[Theorem 2.1]{MV}} that $\SO(g_{Cal})$ is one-dimensional, hence it must be proportional to $(\mathscr{L}_{\nabla}g_b)^\circ$. Then they calculated
\[\lambda_0 =-\frac{2n+2}{2}\lim_{r \to \infty} \int_{S_r / \Gamma}  \frac{\langle H, o_0 \rangle}{r}   d\sigma_{S_r / \Gamma}\]
in methods of K\"ahler geometry, to get the obstruction {\cite[Theorem 1.1]{MV}}
\begin{equation}
    n\langle\mathcal{R}(\omega),\omega\rangle(p)+2(n-2)\mathrm{scal}(p)=0,
\end{equation}
where $\omega(p)$ is a canonically defined K\"ahler structure at $p$, given by the orbifold group $\Z_n$; and
\[\langle\mathcal{R}(\omega),\omega\rangle(p):=R_{ijkl}(p)\omega_{ij}(p)\omega_{kl}(p).\]

\bibliography{ref}
\vspace{0.5in}

Yichen Yao, School of Mathematical Sciences, University of Science and Technology of China, No. 96 Jinzhai Road, Hefei, Anhui Province, 230026, China; yichenyao@mail.ustc.edu.cn.\\
\end{document}